\newtheorem{theorem}{Theorem}[section]
\newtheorem{lemma}[theorem]{Lemma}
\newtheorem{proposition}[theorem]{Proposition}
\newtheorem{remark}[theorem]{Remark}
\newtheorem{corollary}[theorem]{Corollary}
\newtheorem*{theorem*}{Theorem} 
\newtheorem*{corollary*}{Corollary} 
\newtheorem{assumptions}[theorem]{Assumptions}
\newtheorem{Notations}[theorem]{Notations}
\numberwithin{equation}{section}
\newcommand{\supp}{\operatorname{supp}} 
\newcommand{\e}{{\rm e}} 
\newcommand{\mfa}{\operatorname{\mathfrak{a}}}
\newcommand{\mfg}{\operatorname{\mathfrak{g}}}
\newcommand{\mfk}{\operatorname{\mathfrak{k}}}
\newcommand{\mfm}{\operatorname{\mathfrak{m}}}
\newcommand{\mfn}{\operatorname{\mathfrak{n}}}
\newcommand{\mfp}{\operatorname{\mathfrak{p}}}
\newcommand{\mfw}{\operatorname{\mathfrak{w}}}
\newcommand{\R}{\operatorname{\mathbb{R}}}
\newcommand{\C}{\operatorname{\mathbb{C}}}
\newcommand{\Z}{\operatorname{\mathbb{Z}}}
\newcommand{\N}{\operatorname{\mathbb{N}}}
\let\eps\varepsilon
\def\D{\mathbb{D}}
\def\abso #1{ \left| #1 \right| } 
\def\set #1{ \left\{ #1 \right\} } 
\def\norm #1{ \left\| #1 \right\| } 
\def\bracket #1{ \left( #1 \right) } 
\def\Bracket #1{ \left[ #1 \right] } 
\def\ip #1{ \left\langle #1 \right\rangle } 
\newcommand{\HCc}{\operatorname{\mathbf{c}}} 
\newcommand{\LB}{{F(\Delta_\rho)}}
\newcommand{\La}{{F(\mathcal{L})}} 
\newcommand{\CA}{{\text{Cl($A^+$)}}} 
\newcommand{\Ca}{{\text{Cl($\mfa^+$)}}} 
\let\a\alpha
\let\l\lambda
\let\phi\varphi
\def\cal#1{{\mathcal#1}}
\begin{document}


\title{Shifted wave equation on noncompact symmetric spaces} 

\author[Y. Kuznetsova]{Yulia Kuznetsova}
\address{Yulia Kuznetsova
\endgraf
Universit\'e Marie et Louis Pasteur, CNRS, LmB (UMR 6623), F-25000 Besançon, France}
\email{yulia.kuznetsova@univ-fcomte.fr}

\author[Z. Song]{Zhipeng Song}
\address{Zhipeng Song
\endgraf
Ghent University, Department of Mathematics: Analysis, Logic and Discrete Mathematics, 9000 Ghent, Belgium
\endgraf
Universit\'e Marie et Louis Pasteur, CNRS, LmB (UMR 6623), F-25000 Besançon, France
}
\email{zhipeng.song@univ-fcomte.fr}

\subjclass[2020]{43A85; 22E30; 42B15; 35L05; 43A90} 

\begin{abstract}
Let $G$ be a semisimple, connected, and noncompact Lie group with a finite center.  We carry out a detailed analysis of oscillating integrals involving the Harish-Chandra $c$-function, in the case of real rank $l\ge 2$. This allows to obtain two main applications. Consider the Laplace-Beltrami operator $\Delta$ on the homogeneous space $G/K=S$ by a maximal compact subgroup $K$. We obtain pointwise estimates for the kernel of an oscillating function $\exp( it\sqrt{|x|}) \psi(\sqrt{|x|}) $ applied to the shifted Laplacian $\Delta+|\rho|^2$. We obtain a polynomial decay in time of the kernel, and of the $L^p-L^q$ norms of the operator, for $1\le p<2<q\le \infty$. For the related distinguished Laplacian, we obtain bounds for the $L^p-L^p$ norms, $1\le p\le\infty$, with a slower growth in time than predicted by earlier results.
\end{abstract}

\keywords{
Wave equation; non-compact Riemannian symmetric spaces; Multipliers}



\maketitle

\section{Introduction}
Let $G$ be a semisimple, connected, and noncompact Lie group with a finite center. Its homogeneous space $G/K=S$ by a maximal compact subgroup $K$ is a Riemannian manifold, with its Laplace--Beltrami operator $\Delta$.

It is a self-adjoint operator on $L^2(S)$, $|\Delta|=-\Delta$ is positive, 
and by the spectral theorem, every bounded Borel function $F$ on $\R$ defines a bounded operator on $L^2(S)$:
\[
F(|\Delta|)=\int_0^\infty F(\xi) dE(\xi),
\] 
where $E$ is the spectral measure of $\Delta$. Moreover, being left-invariant, $F(|\Delta|)$ acts as a convolution on the right with a kernel $k_{F(|\Delta|)}$, a priori defined in the distributional sense. 

Much attention has been devoted to the following question: 
under what conditions 
$F(|\Delta|)$ is also bounded from $L^p(S)$ to $L^q(S)$?
%
The answer is well known for the heat semigroup 
$\exp(-t|\Delta|)$ (see the book 
\cite{var-book}, the survey \cite{
anker-survey}, as well as recent results \cite{
anker-heat-solutions}); 
the Poisson semigroup $\exp( -t \sqrt{|\Delta|} )$ \cite{anker1992sharp, AnkerJi, CGM1, CGM2, CGM3}, 
the resolvents $(z-\Delta)^{-s}$ \cite{anker1992sharp,AnkerJi, CGM1}.

For oscillating functions of the type
\begin{equation}\label{Psi-intro}
\Psi(x) = e^{it\sqrt{|x|}} \psi(\sqrt{|x|}),
\end{equation}
giving solutions of the wave equation, numerous estimates are known on symmetric spaces of rank one and on Damek-Ricci spaces (also of rank one but possibly non-symmetric) \cite{tataru,anker-hyperbolic,anker-damek}, see also references in \cite{anker-damek}. But in higher rank, where less explicit formulas are available, the results are few \cite{hassani,AnkerZhang,CGM3,CGMWave}. Important cases to mention are $L^p-L^q$ bounds for the Poisson kernel, that is $\psi(r)=e^{-u r}$, especially in small time~$t$ \cite{CGM3}, and for $\psi(r) = (\beta^2+ r^2)^{ -\alpha/2}$ at fixed time $t=1$ \cite{CGMWave}. We will return to them in the sequel.

Before going into further detail, let us mention that the situation is similar for the Schr\"odinger equation (see \cite{AnkerVittoriaVallarino} in rank one and \cite{anker-schr} in higher rank).

Our main interest in this paper is in functions $\Psi$ of the type \eqref{Psi-intro}.
By $K$-biinvariance, it is sufficient to estimate the kernel on the positive part of $G$ in the Cartan decomposition $G=K\CA K$. For $x\in G$, let $x^+$ denote the vector in $\Ca$ corresponding to this decomposition; moreover, let $n$ denote the dimension on $S$, $l$ its rank, $d$ the number of reduced roots and $\nu=2d+l$ the pseudo-dimension (see more on notations in Section \ref{sec-symmetric}). The statement involves also the functional $\rho$ equal to the half-sum of positive roots.
Hassani \cite{hassani}, and then Anker and Zhang \cite{AnkerZhang} with stronger estimates obtain the following theorem:
\begin{theorem*}[\cite{AnkerZhang}] Set $\psi(\sqrt x) = |x|^{-\a/2}$ with $\sigma\in \C$ and $\Re \,\a \le(n+1)/2$; 
then the kernel $k_t$ of $\Psi(\Delta)$ satisfies
\begin{equation}\label{kt-AZ}
|k_t(x)| \lesssim \begin{cases}
    |t|^{(1-n)/2}  \, (1+|x^+|)^s \,e^{ -\rho(x^+)}, & 0<|t|< 1, \ s=\frac{\max(n,\nu)}2-\frac l2;\\ 
    |t|^{ -\nu/2} \, (1+|x^+|)^{2d+l/2} \,e^{ -\rho(x^+)}, & |t|\ge1. 
    \end{cases}
\end{equation}
\end{theorem*}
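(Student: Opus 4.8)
\medskip
\noindent\textbf{Proof strategy.}
The plan is to pass to the spherical Fourier transform on $S$ and establish \eqref{kt-AZ} by reducing it to a family of oscillatory integrals over $\mfa^*$. As $\Delta$ is the Laplace--Beltrami operator its $L^2$-spectrum is $[\,|\rho|^2,\infty)$, carried by $\lambda\in\mfa^*$ through the eigenvalue $|\lambda|^2+|\rho|^2$, so that, writing $x=\exp H$ with $H=x^+\in\Ca$ (legitimate by $K$-biinvariance),
\[
  k_t(\exp H)=c\int_{\mfa^*}e^{\,it\sqrt{|\lambda|^2+|\rho|^2}}\,\bigl(|\lambda|^2+|\rho|^2\bigr)^{-\a/2}\,\varphi_\lambda(\exp H)\,|\HCc(\lambda)|^{-2}\,d\lambda,
\]
with $\varphi_\lambda$ the elementary spherical function. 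Write the symbol $a(\lambda)=(|\lambda|^2+|\rho|^2)^{-\a/2}$; it is smooth and of order $-\Re\a$. I would split $a=\sum_{j\ge0}a_j$ dyadically, $a_j$ supported where $|\lambda|\simeq2^{j}$ for $j\ge1$ and $a_0$ where $|\lambda|\lesssim1$, estimate the contribution $k_t^{(j)}$ of each piece and sum in $j$. The two quantitative inputs are the Plancherel density estimate $|\HCc(\lambda)|^{-2}\simeq\prod_{\a}|\langle\a,\lambda\rangle|^{2}$ for $|\lambda|\lesssim1$ (product over the $d$ positive reduced roots, a zero of total order $2d$ at the origin) together with $|\HCc(\lambda)|^{-2}\lesssim(1+|\lambda|)^{\,n-l}$ globally, and the spherical function bound $|\varphi_\lambda(\exp H)|\le\varphi_0(\exp H)\lesssim\prod_{\a}(1+\langle\a,H\rangle)\,e^{-\rho(H)}$ (refined to $|\varphi_\lambda(\exp H)|\lesssim(1+|\lambda|\,|H|)^{N}\varphi_0(\exp H)$ when needed). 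The competition between the growth $(1+|\lambda|)^{\,n-l}$ of the density and the decay $|\lambda|^{-\Re\a}$ of the symbol is what makes $\Re\a\ge(n+1)/2$ the borderline at which the high-frequency sum converges once the time decay has been extracted --- the same threshold as in the Euclidean fixed-time estimate.

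\medskip
\noindent\textbf{Small time $0<|t|<1$.}
Finite propagation speed confines $k_t(\exp H)$, up to a rapidly decaying tail, to $|H|\lesssim1$, a region on which $S$ is uniformly comparable to Euclidean space. Comparing each dyadic piece $k_t^{(j)}$ with the corresponding Euclidean half-wave operator applied to a symbol of order $-\Re\a\le-(n+1)/2$, whose fixed-time kernel decays like $|t|^{(1-n)/2}$, produces the factor $|t|^{(1-n)/2}$; the $\varphi_0$-bound then supplies $e^{-\rho(H)}$, and the polynomial correction $(1+|H|)^{s}$ with $s=\tfrac12\max(n,\nu)-\tfrac l2$ emerges from the dyadic bookkeeping, taking the worse of the ``Euclidean'' count governed by $n$ and the ``pseudo-dimensional'' count governed by $\nu=2d+l$ through $|\HCc|^{-2}$. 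Alternatively one integrates by parts in $\lambda$ in each $k_t^{(j)}$ along the radial direction, where the phase $t\sqrt{|\lambda|^2+|\rho|^2}+\langle H,\lambda\rangle$ has derivative $\simeq|t|$ as soon as $2^{j}|t|\gtrsim1$, gaining until the $\lambda$-integral converges, and sums to the same bound.

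\medskip
\noindent\textbf{Large time $|t|\ge1$.}
This is the core, where the geometry of $S$ replaces Euclidean dispersion. In the interior of $\mfa^+$, away from the walls, I would substitute the Harish--Chandra series
\[
  \varphi_\lambda(\exp H)=\sum_{w\in W}\HCc(w\lambda)\,\Phi_{w\lambda}(H),\qquad\Phi_\mu(H)=e^{(i\mu-\rho)(H)}\sum_{\xi}\Gamma_\xi(\mu)\,e^{-\xi(H)},
\]
which turns $k_t(\exp H)$ into a finite sum over $w\in W$ of expressions $e^{-\rho(H)}\sum_{\xi}e^{-\xi(H)}\int_{\mfa^*}e^{\,i\phi_w(\lambda)}\,b_{w,\xi}(\lambda)\,a(\lambda)\,d\lambda$, with phase $\phi_w(\lambda)=t\sqrt{|\lambda|^2+|\rho|^2}+\langle w\lambda,H\rangle$ and amplitude $b_{w,\xi}$ assembled from $\HCc$-factors and the $\Gamma_\xi$. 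Since $\nabla\phi_w(\lambda)$ equals $t\lambda/\sqrt{|\lambda|^2+|\rho|^2}$ plus a vector of length $|H|$, the phase is non-stationary when $|H|\ge|t|$, and repeated integration by parts in $\lambda$ makes $k_t$ rapidly decaying there (``outside the light cone''). When $|H|\le|t|/2$ the critical point $\lambda_c$ sits at low frequency, $|\lambda_c|\simeq|H|/|t|\lesssim1$, where the amplitude vanishes to order $d$ (from $\HCc(w\lambda)|\HCc(\lambda)|^{-2}\simeq\prod_{\a}\langle\a,\lambda\rangle$ near $0$), so $|b_{w,\xi}(\lambda_c)|\lesssim(|H|/|t|)^{d}$, while the Hessian of $\phi_w$ there is $\simeq(t/|\rho|)\,\mathrm{Id}$, of size $\simeq|t|^{l}$; stationary phase in the $l$ variables $\lambda$ therefore gives
\[
  |k_t(\exp H)|\lesssim e^{-\rho(H)}\,|t|^{-l/2}\,(|H|/|t|)^{d}=|t|^{-\nu/2}\,|H|^{d}\,e^{-\rho(H)},
\]
and the room up to the claimed power $(1+|H|)^{2d+l/2}$ is used by the subleading stationary-phase terms, the poles of the $\Gamma_\xi$ near the walls, and the $\xi$-summation against $e^{-\xi(H)}$. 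For $|H|$ bounded one bypasses the expansion and reads off $|t|^{-\nu/2}$ directly from $\bigl|\int_{|\lambda|\lesssim1}e^{\,it\sqrt{|\lambda|^2+|\rho|^2}}\prod_{\a}\langle\a,\lambda\rangle^{2}\,d\lambda\bigr|$ after rescaling $\lambda\mapsto|t|^{-1/2}\lambda$. The high frequencies $|\lambda|\gtrsim1$ are non-stationary throughout $|H|\lesssim|t|$ and are killed by integration by parts, the decay $|a(\lambda)|\lesssim|\lambda|^{-(n+1)/2}$ making the $j$-sum converge.

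\medskip
\noindent\textbf{Main obstacle.}
The hard part is the region near the walls of $\mfa^+$ and, more generally, making all the above uniform in $H\in\Ca$, in $t$ and in the dyadic scale $2^{j}$ at once: on the walls the Harish--Chandra series degenerates, since the $\Gamma_\xi(\lambda)$ have poles on the root hyperplanes and the geometric sum in $e^{-\xi(H)}$ ceases to converge uniformly, so one must partition $\mfa^+$ into conical regions adapted to its faces and, on the transitional regions, interpolate between the ``interior'' estimate above and a ``wall'' estimate obtained from $|\varphi_\lambda(\exp H)|\lesssim(1+|\lambda|\,|H|)^{N}\varphi_0(\exp H)$, tracking the polynomial losses carefully --- this is what pins down the exponents $s$ and $2d+l/2$. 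A secondary difficulty is the near-light-cone zone $|t|/2<|H|<|t|$, where the critical point $\lambda_c$ runs off to high frequency and the Hessian of $\phi_w$ degenerates in the radial direction, so textbook stationary phase does not apply and one must localise in the radial and angular variables of $\lambda$ before estimating. In rank one explicit formulas for $\varphi_\lambda$ and $\HCc$ make all of this transparent; carrying it out in arbitrary rank is exactly the content of \cite{hassani,AnkerZhang}.
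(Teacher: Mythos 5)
First, note that the paper does not prove this statement at all: it is quoted verbatim from \cite{AnkerZhang} as background for the introduction, and the only comment made about its proof is that it rests on a stationary phase analysis of the $l$-dimensional integral \eqref{integral formula of kernel}. So there is no internal argument to compare yours against; your proposal has to stand on its own as a proof of the Anker--Zhang estimates, and it does not yet do so.

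What you wrote is a correct high-level roadmap, and it does match the known strategy: spherical inversion, low/high-frequency (dyadic) splitting, the Harish--Chandra expansion of $\varphi_\lambda$ plus stationary phase in the interior of the Weyl chamber, non-stationary phase outside the light cone. But every place where the theorem is actually hard is either asserted heuristically or explicitly deferred. Concretely: (i) for $0<|t|<1$ the ``comparison of each dyadic piece with the corresponding Euclidean half-wave operator'' is not an available black box on a symmetric space --- one needs uniform large-$\lambda$ asymptotics of $\varphi_\lambda(\exp H)$ (or an Abel-transform reduction) with control in $H$, and since $\Psi(\Delta)$ is non-local the region $|H|\gtrsim 1+|t|$ is not removed by finite propagation speed; the ``rapidly decaying tail'' must itself be shown to satisfy the stated bound with the factor $e^{-\rho(H)}$, which is part of the claim, not a remark; (ii) for $|t|\ge 1$ you acknowledge that the $\Gamma_\xi$ have poles on the root hyperplanes and the series fails near the walls, that the zone $|t|/2<|H|<|t|$ escapes textbook stationary phase because the critical point runs off to high frequency and the radial Hessian degenerates, and that the exponents $s=\frac{\max(n,\nu)}{2}-\frac l2$ and $2d+l/2$ come from an interpolation and bookkeeping you do not carry out --- and you then refer to \cite{hassani,AnkerZhang} for ``carrying it out in arbitrary rank'', which is circular as a proof of the very statement in question. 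So the gap is genuine: the proposal names the right tools and the right obstacles but resolves none of the latter. (This is also exactly why the present paper cannot recycle that scheme for $\Delta_\rho$: at the endpoint shift $q=|\rho|$ the critical set of the phase is no longer a finite set of nondegenerate points, which is what motivates the fixed-radius, polar-coordinate stationary phase machinery of Sections \ref{Oscillating integrals with a radial factor}--\ref{Global_estimates}.)
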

The central part of the proof relies on the stationary phase analysis of the oscillating $l$-dimensional integral \eqref{integral formula of kernel} giving the value of the kernel. The main application is to the following dispersive estimates, and to Strichartz estimates which we do not cite here:
\begin{corollary*}[\cite{AnkerZhang}]
	 Suppose that $n\ge3$ and set $\a_0 = \max( \frac12 - \frac1p, \frac12 - \frac1{\tilde p} )$. Then for $2<p,\tilde p<\infty$ and $\a \ge (n+1)\a _0$
	\begin{equation}\label{kt-AZ dispersive}
	\| |\Delta| ^{-\a/2} \exp( it\sqrt{|\Delta|} ) \|_{L^{\tilde p'}(S) \to L^p(S) }\lesssim \begin{cases}
	|t|^{ -(n-1) \a_0} , & 0<|t|< 1, \\ 
	 |t|^{ -\nu/2} , & |t|\ge1. 
 	\end{cases}
	\end{equation}
\end{corollary*}

Methods of Anker and Zhang also apply to translated operators $-\Delta_q = \Delta + q^2$ if $0<q<|\rho|$. 
But one cannot follow the same path if $q=|\rho|$. We should recall the spectrum of $\Delta$ is $(-\infty,-|\rho|^2]$, and its translations with $q$ as above still have a spectral gap, while $-\Delta_\rho = \Delta+|\rho|^2$ has not. As for the kernels, the difference is that the phase in the oscillating integral has a family of non-isolated critical points, and standard stationary phase analysis cannot be held.

We therefore use a different strategy to deal with this boundary case $\Delta_\rho$. We obtain estimates for $\Psi(\Delta_\rho)$ with a wide range of functions $\psi$, decreasing quickly enough. Our main result is the following theorem, stated below in a shortened form; the full one is \Cref{eit-psi-LB-no-gap}.

\begin{theorem*}[\Cref{eit-psi-LB-no-gap}]
Let $\psi \in C^{(p)}([0,+\infty))$ be polynomially decreasing with its $p$ derivatives and has zero of order $b\le p-2$ at point $0$.
Then:
\begin{enumerate}
\item If $p=\nu$, then for every $x\in G$ and $t\ne0$,
$$
|k_t(x)| 
\lesssim |t|^{-\nu} ( 1+ |x^+| ) ^{ \nu+d} e^{-\rho( x^+) }.
$$
\item If $p=\lfloor l/2 \rfloor$, then for $t$ large enough and $|x^+| \le 3|t|$,
$$
|k_t(x)| \lesssim\, |t|^{(1-l)/2} (\log |t|)^d e^{-\rho(x^+)}.
$$
\item If $p= 2d + \lfloor l/2\rfloor$, then for $|t|$ large enough and $|x^+| > 3|t|$
\begin{align*}
| k_t(x) | &\lesssim |x^+|^{-d-l} 
e^{-\rho(x^+)}.
\end{align*}
\item There is a constant $\delta$ depending on the group only such that for $|t|$ large enough and $x$ with $|x^+| >\log |t|$ and $||t|-|x^+| \,| > \delta \log |t|$
\begin{align*}
| k_t(x) | \lesssim \big( |x^+|^{-d-b-l} &+ |x^+|^{(1-l)/2} ||x^+| -|t||^{-d-b-(l+1)/2} \big)
	\\&\times (\log |t|)^{2(2d+b+l)+1} e^{-\rho(x^+)}.
\end{align*}
\end{enumerate}
\end{theorem*}
The first estimate applies also to small $t$, though our main interest is in large time $t$.

Statement 1 of this theorem is analogous to \eqref{kt-AZ} above. Note the difference in the exponent between the two cases, $\nu$ against $\nu/2$. We conjecture that both are optimal, for small $x^+$, and reflect different behaviour of the two operators.

This statement alone is sufficient to prove dispersive estimates similar to the Corollary above: in Theorem \ref{dispersive-LB-no-gap}, we prove that
    \[
    \norm{\Psi(\Delta_\rho)}_{L^{p'}(S)\to L^{p}(S)} \lesssim |t|^{-\nu}
    \]
for $t\ne0$ and any $2< p< \infty$, with $p'$ being the conjugate exponent of $p$.

Estimates 2-4 in the theorem require much more work and are in fact motivated by the study of another related operator $\cal L$, usually called the distinguished laplacian.

Recall that the group $S$ is non-unimodular. Conjugating $\Delta_\rho$ by the modular function (see details in Section \ref{sec-dist-lapl}), we obtain the distinguished laplacian $\cal L$, and for any function $F$ their kernels are tightly related: $k_\LB=\delta^{1/2} k_\La$. The behaviour of the two operators is however quite different, and notably, $\La$ is bounded on $L^p(S)$, $1< p<\infty$, and is of weak type $(1,1)$, under mild conditions on~$F$ (existence and decrease of a finite number of derivatives, see \cite{Hebisch,cowling1994spectral,sikora2002spectral}), while for $F(\Delta)$ being bounded on $L^p(S)$, $p\ne2$ requires $F$ to be holomorphic in a stripe around the real axis \cite{clerc1974lp}.

The mentioned number of derivatives for the case of $\cal L$ was first estimated as $s_1 = \frac12 + \max( \frac{\nu}2, \frac{n}2) + \varepsilon$ \cite{Hebisch,cowling1994spectral}, to be later improved by Sikora to $s = \max( \frac{\nu}2, \frac{n}2) + \varepsilon$ \cite{sikora2002spectral}.
For oscillating functions as in \eqref{Psi-intro}, the above results imply that the weak $(1,1)$ norm of $\Psi(\cal L)$
 is bounded by $C t^{s}$ with
$s = \max( \frac{\nu}2, \frac{n}2) + \varepsilon$. This gives also a bound of the norm of $\|\Psi(\cal L)\|_{L^p(S) \to L^p(S)}$ for every $1< p< \infty$, by interpolation.

In his unpublished thesis \cite{gadzinski}, Gadzinki obtained a better estimate 
\[
\|k_{\Psi(\cal L)}\|_1 \lesssim |t|^{\frac{\nu-1}2}
\]
 for the special function $\Psi(x) = \cos( t x) \exp(-x^2)$, suggesting that oscillating functions behave better than general ones.
We confirm this conjecture in a strong way. Our estimates 2-4 imply the following theorem:
\begin{theorem*}
Suppose that $\psi$ satisfies the assumptions of Theorem \ref{eit-psi-LB-no-gap}.
Then for $|t|$ large enough,
$$
\| k_{\Psi(\cal L)}\|_1 \lesssim |t|^{(\nu-1)/2} (\log |t|)^{3d+3+(3l+1)/2}.
$$
\end{theorem*}

The power $\frac {\nu-1}2$, valid for oscillating functions (with enough regularity), is thus $\frac12$ smaller than the power $s$ above which has been shown by Sikora to be optimal for general functions. We conjecture that this new power is optimal in the oscillating case.



Let us outline the strategy of the proof. As mentioned, $k_t(x)$ is given by an integral \eqref{integral formula of kernel} over $\l\in \mfa$,
including the radial function $\Psi(|\l|^2)$ and a non-radial part which does not depend on $\Psi$. This allows to represent it, in polar coordinates, as
$$
k_t(x) = \int_0^\infty \Psi(r^2) \xi(r,x) dr,
$$
with a density $\xi$ involving elementary spherical functions $\varphi_\lambda$ and the $\HCc$-function. Known  approximations of
spherical functions allow us to reduce $\xi$ to an oscillating integral \eqref{xi(r,h)} with $r|x^+|$ as the parameter. This can be treated by the stationary phase method, a task occupying Section 3. Normally, this method is applied with a large parameter, but using exact estimates of remainders, we can treat any values of $r|x^+|$. 



In Section 3, the first result, \Cref{main theorem}, gives the main term and remainders in this analysis. The rest of the section is devoted then to thorough estimates of the terms and their derivatives, with a dependence on $r$.


In Section 4, we present the main result \Cref{eit-psi-LB-no-gap} concerning the kernel estimate. To prove it, we apply the machinery developed in 
Section 3.

Using the Kunze-Stein phenomenon, dispersive and $L^p-L^q$ estimates of the operator $\Psi(\Delta_\rho)$ can be directly deduced from the kernel estimate (Theorems \ref{dispersive-LB-no-gap} and \ref{Lp-Lq-LB}).

Section \ref{sec-poisson} is devoted to the Poisson semigroup $\cal P_\tau = \exp( -\tau \sqrt{|\Delta_\rho|})$, with complex $\tau$ such that $\Re\tau\ge0$. Pointwise and $L^p-L^q$ bounds were known before \cite{AnkerJi, CGM3}, but it was clear that they were close to optimal for $\tau$ real, or $|\tau|$ small. We obtain bounds for large $\tau$ with a polynomial decrease in $\Im\tau$, in Theorem \ref{Lp-Lq-Poisson}.

For the study of $\exp( -\tau \sqrt{\cal L})$, the bounds of Theorem \ref{eit-psi-LB-no-gap} are not sufficient. In Lemma \ref{lemma-poisson-ptwise}, we obtain more precise pointwise estimates at infinity.
In Section \ref{sec-L1}, we show how they imply similar estimates for general functions, which finally allows to prove that the kernel $k_{\Psi(\cal L)}$ is integrable and to estimate its norm, in Theorem~\ref{L-l1-norm}, and $L^p-L^p$ norms of $\Psi(\cal L)$, $1\le p\le\infty$, in Theorem~\ref{L-Lp-Lq-norms}.

\paragraph{\bf{Acknowledgment}}
This work is supported by the EIPHI Graduate School (contract ANR-17-EURE-0002). The second author is partially supported by the Methusalem programme of the Ghent University Special Research Fund (BOF), grant number 01M01021.

We are much obliged to Professor Waldemar Hebisch for enlightening discussions and for sharing the text of the thesis of Pzemyslav Gadzinski \cite{gadzinski}. The second author would like to thank his co-advisor, Professor Michael Ruzhansky, for his help and support during the course of this research.

\section{Notations and Preliminaries}
Throughout this paper, we shall use the convention in which $C$ (respectively $C_l$, $C_\psi$ etc.) always denotes a positive finite constant (respectively a positive constant depending on the parameter $l$, on the function $\psi$ etc.), which may vary from sentence to sentence. We denote by $m$ multi-indices in $\Z_+^l$ and, for convenience, use the same notation $m$ for the norm $|m|=m_1+\dots+m_l$ of $m$. We also use $\partial^m f(\lambda)$ to denote $\frac{\partial^m }{\partial \l_1^{m_1}\dots \partial \l_l^{m_l}}f(\l)$ shortly.
\subsection{Symmetric spaces}\label{sec-symmetric}
Let $G$ be a semisimple, connected Lie group with finite center and $\mfg$ the corresponding Lie algebra of $G$. 
Let $K$ be a maximal compact subgroup of $G$ and $\mfk$ the Lie algebra of $K$. We have the Cartan decomposition $\mfg = \mfk\oplus \mfp$ and the corresponding Cartan involution $\theta$ acting as 1 on $\mfk$ and -1 on $ \mfp$. 
Let $\mfa$ be a maximal abelian subspace of $\mfp$, this is, $\mfa$ is a Lie subalgebra of $\mfg$ such that $\mfa\subset \mfp$. 
We denote by $\mfa^*$ the real dual space of $\mfa$ and $\mfa_{\C}^*$ the dual of its complexification $\mfa_{\C}$. 

Let $\Sigma\subseteq \mfa^* $ be the set of restricted roots of the pair $(\mfg,\mfa)$ and $\mfw$ the associated Weyl group. By choosing a lexicographic ordering of the roots we can define the set of positive roots $\Sigma^+$. Then $\Sigma_r^+$(resp. $\Sigma_s^+$) be the set of reduced (resp. simple) roots. Note that $\Sigma_s^+ \subseteq \Sigma_r^+ \subseteq \Sigma^+ \subseteq \Sigma$ and $\Sigma_s^+$ is a basis of $\mfa^*$.

Setting $\mfn=\bigoplus_{\alpha\in \Sigma^+}\mfg_{\alpha}$ where $\mfg_\alpha$ is the root subspaces associated with $\alpha$, we obtain the Iwasawa decomposition $\mfg= \mfk\oplus \mfa\oplus \mfn$.
Let $A$ (resp. $N$) be the analytic subgroup of $G$ with the Lie algebra $ \mfa$ (resp. $ \mfn$). 
On the Lie group level, we get the Iwasawa decomposition $G=KAN$. 
Similarly, we also have the decomposition $G=NAK$. 
Let $H(g)$ (resp. $A(g)$) denote the unique $\mfa$-component of $g\in G$ in the decomposition $g=k\exp(H(g))n$ (resp. $g=n'\exp(A(g))k'$) with $k,k'\in K, n,n'\in N$.

The Killing form $B$ of $\mfg$ is positive definite on $\mfa$ and defines the inner product on $\mfa$ and $\mfa^*$ as well. We define the positive Weyl chamber of $\mfa$ associated with $\Sigma^+$ as 
\[
\mfa^+=\set{H\in \mfa \vert \alpha(H)> 0\ \text{for all} \ \alpha\in \Sigma^+}.
\]
Denote by $\Ca=\set{H\in \mfa: \alpha(H)\ge 0 \ \text{for all} \ \alpha\in \Sigma^+}$ the closure of $\mfa^+$ in $\mfa$. The polar decomposition of $G$ is given by $G=K\CA K$ where $\CA=\exp(\Ca)$, and we denote $x^+$ the $\Ca$-component of $x\in G$ in this decomposition.

By the Weyl vector $\rho$, we mean the half sum of positive roots $\alpha$ counted with their multiplicities $m_\alpha=\dim\mfg_\alpha$, i.e.,
$\rho(H)=\frac{1}{2}\sum_{\alpha\in \Sigma^+}m_\alpha \alpha(H),H\in \mfa$.
Now, for any $f\in C_c(G)$, we recall the integral formula for $K\CA K$ decomposition \cite[page 382]{helgason1962differential}:
\begin{align}\label{integral_formula_KAK}
    \int_G f(g)d g=C_G \int_K\int_{ \mfa^+}\int_K f(k_1\exp(H)k_2)d k_1D(H)d Hd k_2.
\end{align}
with
\begin{equation}\label{est-D}
D(H)=\prod_{\alpha\in \Sigma^+}\sinh^{m_\alpha} \alpha(H)\le C\left( \frac{|H|}{1+|H|}\right)^{n-l}e^{2\rho(H)}
 \le C e^{2\rho(H)},
\end{equation}
$H\in \overline{ \mfa^+}$.

The homogeneous space $G/K$ can be identified with the solvable Lie group $S=AN$ with the Lie algebra $\mfa\oplus \mfn$. 
The group $G$ is unimodular, while $S$ is not (unless $G=A$) and has the modular function given by 
\[
\delta(an)=\e^{-2\rho(\log a)}, \ \ an\in S.
\]
In addition to using $n$ to denote the dimension of $S$, we also introduce the pseudo-dimension of $S$, defined as $ \nu := 2d + l $, where $l$ is the dimension of $A$ and $d$ is the cardinality of the set $ \Sigma_r^+ $.
\subsection{Spherical Fourier analysis}
Harmonic analysis on symmetric spaces is built upon spherical functions.
It is important to note that our main references, Helgason \cite{helgason1962differential,helgason2000groups} and Gangolli and Varadarajan \cite{gangolli1988harmonic}, have different conventions in their notations. We chose to adopt Helgason's notations in our work; translation from \cite{gangolli1988harmonic} to \cite{helgason2000groups} is done by $\lambda \mapsto i\lambda$.

For each $\lambda\in \mfa_{\C}^*$, let $\varphi_\lambda(x)$ be the elementary spherical function given by 
\[
\varphi_\lambda(x)= \int_K e^{(i\lambda-\rho)(H(xk))} dk, \quad \forall x\in G.
\]
Using the identity $-H(g)=A(g^{-1})$ together with \cite[p.419, (7)]{helgason2000groups}, we obtain
\[
\varphi_\lambda(x)= \int_K e^{(-i\lambda+\rho)(A(kx^{-1}))} dk = \int_K e^{(i\lambda+\rho)(A(kx))} dk.
\]
These are exactly the joint eigenfunctions of all invariant differential operators on $G/K$.
We have the following functional equation: $\varphi_\lambda= \varphi_{s\lambda}$ for all $\lambda\in \mfa_{\C}^*, s\in \mfw$.
A basic estimate for $\varphi_\lambda$ is given as following:
\begin{proposition}\label{estimate of phi_lambda}
For all $\l\in \mfa^*, H\in \Ca$
\begin{equation}\label{est-phi_0}
|\phi_\l(e^H)| \le \phi_0(e^H) \le \e^{-\rho(H)}(1+\|H\|)^d.
\end{equation}
Moreover, for any multi-index $m\in \Z_+^l$ of norm $|m|=m_1+\dots+m_l$ there is a constant $C_m>0$ such that
    \[
    \abso{\frac{\partial^m}{\partial \l^m} \varphi_\l(e^H) }
    \le C_m (1+\|H\|)^{|m|} \, \phi_0(e^H).
    \]
\end{proposition}
\begin{proof}
The first statement is \cite[(4.6.3) and (4.6.9)]{gangolli1988harmonic} and the second one follows from \cite[(4.6.5) and Lemma 4.6.7]{gangolli1988harmonic}. 
\end{proof}

More information on spherical functions is given by the Harish-Chandra $\HCc$-function. An explicit expression for it as a meromorphic function on $\mfa_{\C}^*$ is given by the Gindikin-Karpelevich formula: For each $\lambda\in \mfa_{\C}^*$, 
\begin{equation}\label{gindikin-k}
\HCc(\lambda) = c_0 \prod_{\alpha\in \Sigma_r^+}
\frac{ 2^{-i\ip{ \lambda, \alpha_0}} \Gamma\big(i\ip{ \lambda, \alpha_0}\big)}{
\Gamma\big(\frac{1}{2} (\frac{1}{2}m_\alpha +1+ i\ip{ \lambda, \alpha_0})\big)
\Gamma\big(\frac{1}{2}(\frac{1}{2}m_\alpha +m_{2\alpha}+ i\ip{ \lambda, \alpha_0})\big)},
\end{equation}
where $\alpha_0=\alpha/\ip{ \alpha, \alpha }$, $\Gamma$ is the classical $\Gamma$-function, and $c_0$ is a constant given by $\HCc(-i\rho)=1$. 

From the formula above, one can deduce for $\lambda \in \mfa^*$: 
\begin{align}\label{estiofHCc}
    |\HCc(\lambda)|^{-2}
    \le C |\lambda|^{\nu-l} (1+|\lambda|)^{n-\nu}
    \le C 
    \begin{cases}
    |\lambda|^{\nu-l} & \text{if}\ |\lambda|\le 1,\\
    |\lambda|^{n-l} & \text{if}\ |\lambda|> 1.
    \end{cases}
\end{align}
For small $\l$, this can be made a bit more precise: $|\HCc(\lambda)|^{-1} \le |\l|^d \sigma_1(\l)$ with a smooth function $\sigma_1$ on $\mfa^*$.
The other two properties of $\HCc$ we will use are that for $\lambda\in \mfa^*$ and $s\in \mfw$
\begin{equation}\label{c-conjugate}
|\HCc(s\lambda)|=|\HCc(\lambda)|,\quad \HCc(-\lambda)=\overline{\HCc(\lambda)}.
\end{equation}
The following is also known:
\begin{theorem}[\text{\cite[Proposition 4.7.15]{gangolli1988harmonic}}]\label{estimate of c function}
    The function $\HCc^{-1}$ and its partial derivatives of any order are majorized by 
    \[
    C(1+\abso{\lambda})^{(1/2)(n-l)},
    \]
    $\l\in \mfa^*$.
\end{theorem}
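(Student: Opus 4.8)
My plan is to go through the explicit Gindikin--Karpelevich formula \eqref{gindikin-k} together with the asymptotics of the Gamma function. \emph{Step 1: reduction to one real variable.} From \eqref{gindikin-k},
\[
\HCc(\lambda)^{-1} = c_0^{-1}\prod_{\alpha\in\Sigma_r^+} f_\alpha\big(\ip{\lambda,\alpha_0}\big),\qquad
f_\alpha(t) := \frac{\Gamma\big(\tfrac12(\tfrac12 m_\alpha+1)+\tfrac{it}{2}\big)\,\Gamma\big(\tfrac12(\tfrac12 m_\alpha+m_{2\alpha})+\tfrac{it}{2}\big)}{2^{-it}\,\Gamma(it)}.
\]
Writing $\Gamma(it)^{-1}=it\,\Gamma(1+it)^{-1}$ shows that each $f_\alpha$ is real-analytic on all of $\R$ (the simple zero at $t=0$ is harmless since $\Gamma(1)\ne0$, and the two numerator arguments have positive real part), so $\HCc^{-1}\in C^\infty(\mfa^*)$, while each $\ip{\cdot,\alpha_0}\colon\mfa^*\to\R$ is linear.

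\emph{Step 2: one-variable bounds.} I claim that for every $k\ge0$ there is $C_k$ with $|f_\alpha^{(k)}(t)|\le C_k(1+|t|)^{(m_\alpha+m_{2\alpha})/2}$ for all $t\in\R$. For $|t|\le1$ this is continuity on a compact interval. For $|t|>1$, Stirling's formula $|\Gamma(a+ib)|\sim\sqrt{2\pi}\,|b|^{a-1/2}e^{-\pi|b|/2}$ ($a$ fixed, $|b|\to\infty$) applied to the three Gamma factors shows that the factors $e^{-\pi|t|/4}$, $e^{-\pi|t|/4}$ from the numerator cancel the factor $e^{-\pi|t|/2}$ from $\Gamma(it)$, and the surviving powers of $|t|$ add up to the exponent $\tfrac12(m_\alpha+m_{2\alpha})$; this settles $k=0$. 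For the derivatives I would pass to $\log f_\alpha$. Using $(\log\Gamma)'(z)=\log z+O(1/|z|)$ and $(\log\Gamma)^{(j)}(z)=O(|z|^{-(j-1)})$ for $j\ge2$, the leading logarithms in
\[
(\log f_\alpha)'(t)=\tfrac{i}{2}(\log\Gamma)'\!\big(\tfrac12(\tfrac12 m_\alpha+1)+\tfrac{it}{2}\big)+\tfrac{i}{2}(\log\Gamma)'\!\big(\tfrac12(\tfrac12 m_\alpha+m_{2\alpha})+\tfrac{it}{2}\big)+i\log2-i(\log\Gamma)'(it)
\]
cancel (the coefficients sum to $\tfrac12+\tfrac12-1=0$), so $(\log f_\alpha)'(t)=O(1/|t|)$ and $(\log f_\alpha)^{(k)}(t)=O(|t|^{-(k-1)})$ for $k\ge2$; all of these are bounded on $|t|>1$. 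The higher-order chain rule (Fa\`a di Bruno's formula) then writes $f_\alpha^{(k)}$ as $f_\alpha$ times a universal polynomial in $(\log f_\alpha)',\dots,(\log f_\alpha)^{(k)}$, and combined with the $k=0$ bound on $|f_\alpha|$ the claim follows.

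\emph{Step 3: assembling the product.} Since each $\ip{\cdot,\alpha_0}$ is linear, the Leibniz and chain rules applied to $\HCc^{-1}=c_0^{-1}\prod_\alpha f_\alpha\circ\ip{\cdot,\alpha_0}$ show that, for any multi-index $m$, $\partial^m\HCc^{-1}(\lambda)$ is a finite linear combination --- with coefficients built from the coordinates of the $\alpha_0$ and combinatorial constants --- of products $\prod_{\alpha\in\Sigma_r^+}f_\alpha^{(j_\alpha)}\big(\ip{\lambda,\alpha_0}\big)$ with $\sum_\alpha j_\alpha=|m|$. By Step 2 and $|\ip{\lambda,\alpha_0}|\le C|\lambda|$, each such product is at most $C\prod_{\alpha\in\Sigma_r^+}(1+|\lambda|)^{(m_\alpha+m_{2\alpha})/2}=C(1+|\lambda|)^{\frac12\sum_{\alpha\in\Sigma_r^+}(m_\alpha+m_{2\alpha})}$. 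Finally every positive root is either a reduced root or twice a reduced root, so $\sum_{\alpha\in\Sigma_r^+}(m_\alpha+m_{2\alpha})=\sum_{\beta\in\Sigma^+}m_\beta=\dim\mfn=n-l$, which is exactly the exponent in the statement.

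\emph{The main obstacle} is Step 2: getting derivative bounds of the same polynomial order $(1+|t|)^{(m_\alpha+m_{2\alpha})/2}$ as $f_\alpha$ itself, uniformly as $|t|\to\infty$. Everything hinges on the exact cancellation of the logarithmic growth in $(\log f_\alpha)'$, which reflects the fact that the exponents in the numerator of \eqref{gindikin-k} are calibrated against the single simple pole of $\Gamma(i\ip{\lambda,\alpha_0})$; once this is in hand, the rest is bookkeeping. (For a fully detailed treatment one may of course simply cite \cite[Proposition 4.7.15]{gangolli1988harmonic}.)
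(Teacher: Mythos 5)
Your argument is correct, but note that the paper does not prove this statement at all: it is quoted directly from Gangolli--Varadarajan (Proposition 4.7.15), so your write-up supplies a self-contained proof where the paper relies on a citation. Your route --- factoring $\HCc^{-1}$ through the Gindikin--Karpelevich product \eqref{gindikin-k}, bounding each one-variable factor $f_\alpha$ together with all of its derivatives by $(1+|t|)^{(m_\alpha+m_{2\alpha})/2}$ via Stirling and digamma/polygamma asymptotics, and then using $\sum_{\alpha\in\Sigma_r^+}(m_\alpha+m_{2\alpha})=\sum_{\beta\in\Sigma^+}m_\beta=\dim\mfn=n-l$ --- is essentially the classical argument behind the cited proposition, and it is consistent with how the paper itself exploits \eqref{gindikin-k} elsewhere (see the remark following Assumptions \ref{assumption for sigma}, where $1/\HCc$ is factored as $\prod_{\alpha}\ip{\lambda,\alpha}$ times a holomorphic function). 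The key analytic point, the cancellation of the logarithmic growth in $(\log f_\alpha)'$ between the two numerator Gamma factors and $\Gamma(i\ip{\lambda,\alpha_0})$, with $2^{-i\ip{\lambda,\alpha_0}}$ absorbing the residual constant, is exactly what makes the derivative bounds come out with the same polynomial order as $f_\alpha$ itself, and your Fa\`a di Bruno step from bounded $(\log f_\alpha)^{(k)}$ to $f_\alpha^{(k)}$ is sound since $f_\alpha$ has no zeros for $|t|\ge1$. Only cosmetic touch-ups are needed: on $|t|\le1$ invoke smoothness (not merely continuity) of $f_\alpha$ to bound all derivatives; the positivity of the numerator arguments uses $m_\alpha\ge1$ for $\alpha\in\Sigma_r^+$; the Stirling/digamma expansions should be quoted in a sector $|\arg z|\le\pi-\delta$ containing the imaginary axis, and they give bounds only for $|t|$ large, the intermediate range being handled by compactness. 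What your approach buys is independence from the cited source (useful if one wants explicit constants or the analogous statement for $\HCc_F$), at the cost of the Gamma-function bookkeeping that the citation was meant to avoid.
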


Let us give some definitions concerning standard parabolic subgroups.
Fix a nonzero $H_0\in \Ca$, and let $F\subset \Sigma_s^+$ be the subset of simple roots vanishing at $H_0$. Since $\mfg$ is semisimple, $F$ cannot be equal to $\Sigma_s^+$. Set $\mfa_F = \{ H\in \mfa: \a(H)=0\ \text{for all}\ \a\in F\}$, and let $\mfm_{1F}$ be the centralizer of $\mfa_F$ in $\mfg$. If we denote by $M_{1F}$ the centralizer of $\mfa_F$ in $G$, then $\mfm_{1F}$ is corresponding Lie algebra of $M_{1F}$.
The group $M_{1F}$ is reductive, and $K_F = K \cap M_{1F}$ is its maximal compact subgroup \cite[Theorem 2.3.2]{gangolli1988harmonic}.

Denote by $\Sigma_F^+ \subset \Sigma^+$ the set of positive roots vanishing at $H_0$ and by $\Sigma_F$ the union $\Sigma_F^+\cup -\Sigma_F^+$. Then $F$ and $\Sigma_F^+$ are the simple and positive systems of the pair $(\mfm_{1F}, \mfa)$ respectively, so that the Weyl vector of $M_{1F}$ is $\rho_F:=\frac{1}{2}\sum_{\alpha\in \Sigma_F^+}m_\alpha \alpha$.
We denote by $\mfw_F$ the Weyl group of pair $(\mfm_{1F},\mfa)$  and by $\HCc_F$ the Harish-Chandra $\HCc$-function with respect to $M_{1F}$.
\begin{proposition}\label{cF&c}
     Let $\sigma(\l):=\Bracket{\HCc_F\bar\HCc}^{-1}(\l)$ for any $\l\in\mfa^*$, and $m$ a multi-index, then
    \[
    \abso{\partial^{m} \sigma(\l)} \le C_{l,m} (1+|\l|)^{n-l}
    \]
    where $C_{l,m}$ is a constant independent of $\lambda$.
\end{proposition}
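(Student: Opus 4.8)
The plan is to use the factorization $\sigma = \HCc_F^{-1}\,\overline{\HCc^{-1}}$ (valid since complex conjugation is a field automorphism, so $(\HCc_F\bar\HCc)^{-1}=\HCc_F^{-1}\,\overline{\HCc^{-1}}$) and reduce the estimate to the known derivative bounds for inverse $\HCc$-functions. By the Leibniz rule, for any multi-index $m$,
\[
\partial^m\sigma \;=\; \sum_{m'+m''=m}\binom{m}{m'}\,\partial^{m'}\!\bigl(\HCc_F^{-1}\bigr)\,\partial^{m''}\!\bigl(\overline{\HCc^{-1}}\bigr),
\]
so it suffices to bound $\HCc_F^{-1}$ and $\overline{\HCc^{-1}}$ together with all their partial derivatives on $\mfa^*$.

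For the factor $\overline{\HCc^{-1}}$ this is immediate from \Cref{estimate of c function}: since $\mfa^*$ is real, complex conjugation commutes with $\partial^{m''}$ and preserves moduli, hence $\bigl|\partial^{m''}\overline{\HCc^{-1}}(\lambda)\bigr| \le C_{m''}(1+|\lambda|)^{(n-l)/2}$ for all $\lambda\in\mfa^*$.

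For the factor $\HCc_F^{-1}$, note that $\mfa$ is maximal abelian in $\mfp\cap\mfm_{1F}$, and by the Gindikin--Karpelevich formula \eqref{gindikin-k} applied to $M_{1F}$ the function $\HCc_F$ is, up to a multiplicative constant, the subproduct of \eqref{gindikin-k} over the reduced roots lying in $\Sigma_F^+$; in particular it is constant along the $\mfa_F^*$-directions and is unaffected by the (possibly noncompact) center of $\mfm_{1F}$. Applying \Cref{estimate of c function} to the semisimple part of $M_{1F}$ --- or, equivalently, running the same Stirling-formula computation that yields \eqref{estiofHCc} on this subproduct --- gives $\bigl|\partial^{m'}\HCc_F^{-1}(\lambda)\bigr| \le C_{m'}(1+|\lambda|)^{e_F/2}$, where $e_F := \sum_{\alpha\in\Sigma_F^+} m_\alpha$. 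The key elementary observation is that $\Sigma_F^+\subseteq\Sigma^+$ with multiplicities inherited, so $e_F \le \sum_{\alpha\in\Sigma^+} m_\alpha = n-l$.

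Inserting the two bounds into the Leibniz expansion yields
\[
\bigl|\partial^m\sigma(\lambda)\bigr| \;\le\; C_{l,m}\,(1+|\lambda|)^{e_F/2 + (n-l)/2} \;\le\; C_{l,m}\,(1+|\lambda|)^{n-l},
\]
which is the assertion. I expect the only genuinely delicate point to be the transfer of \Cref{estimate of c function} from the semisimple to the reductive group $M_{1F}$: one has to pin down precisely which root data and which ``dimension minus rank'' quantity --- here $e_F$ --- controls the growth of $\HCc_F^{-1}$ and its derivatives, and then verify $e_F\le n-l$; once the roles of $\mfa_F$ (along which $\HCc_F$ is constant) and of the semisimple part of $\mfm_{1F}$ are sorted out, everything else is the routine bookkeeping above. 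If one prefers to bypass the reductive $\HCc$-function altogether, the required bound on $\HCc_F^{-1}$ follows directly from \eqref{gindikin-k} and Stirling's formula, exactly as \eqref{estiofHCc} and \Cref{estimate of c function} are established in \cite{gangolli1988harmonic}.
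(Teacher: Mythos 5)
Your proposal is correct and follows essentially the same route as the paper: the Leibniz rule combined with the derivative bounds of \Cref{estimate of c function} for $\HCc^{-1}$ and its analogue for $\HCc_F^{-1}$, the product of the exponents $(n-l)/2$ and $e_F/2\le (n-l)/2$ giving the stated bound $(1+|\l|)^{n-l}$. The only difference is that you spell out the transfer of the estimate to the reductive group $M_{1F}$ (identifying $e_F=\sum_{\alpha\in\Sigma_F^+}m_\alpha$ as the relevant ``dimension minus rank'' quantity), a point the paper leaves implicit when it invokes \Cref{estimate of c function} for both factors.
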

\begin{proof}
    By the Leibniz rule and \Cref{estimate of c function} we conclude that the partial derivatives of $\sigma$ are linear combinations, with constants independent of $\l$, of
    $$
    \partial^{m'} \bar\HCc^{-1} (\l)\cdot \partial^{m''} \HCc_F^{-1} (\l)
    $$
    which are bounded by $C(1+|\l|)^{n-l}$ with a constant $C$ independent of $\lambda$.
\end{proof}
For $|\l|\le 1$, an estimate similar to \eqref{estiofHCc} holds: $|\HCc_F(\lambda)|^{-1} \le |\l|^{d_F}$, where $d_F$ is the number of reduced roots of $\mfm_{1F}$. In general, all one can say is that $d_F \ge|F|$, so we keep the following:
\begin{equation}\label{estiofHCcF}
    |\HCc_F(\lambda)|^{-1} \le |\lambda|^{|F|} \sigma_F(\l)
\end{equation}
with a smooth function $\sigma_F$ on $\mfa^*$.

Let $C(G//K)$(resp. $C_c(G//K)$) denote the space of continuous functions (resp. continuous functions with compact support) on $G$ which are bi-invariant under $K$. 
The spherical transform, called also the Harish-Chandra transform, of $f\in C_c(G//K)$ is defined by
\[
(\mathcal{H}f)(\lambda)=\int_G f(x)\varphi_{-\lambda}(x)d x, \quad \lambda\in  \mfa_{\C}^*.
\]
The Harish-Chandra inversion formula is given by
\[
f(x)=C\int_{ \mfa^*}(\mathcal{H}f)(\lambda)\varphi_\lambda(x) \abso{\HCc (\lambda)}^{-2}d\lambda, \quad x\in G
\]
where $C$ is a constant associated with $G$. 

We conclude this subsection with the following \cite[Theorem 5.9.3 (a)]{gangolli1988harmonic} which describes the asymptotic behavior of $\varphi_\l$. 
We will directly cite the theorem with the refined constant $s$ which is pointed out in \cite[Corollary 3.4]{KuznetsovaSong}.
To state it, we need the following result. It is a proposition since the functions $\psi_\lambda$ on $M_{1F}$ are defined there by another formula \cite[(5.8.23)]{gangolli1988harmonic}; we take it however for a definition.
\begin{Notations}[\text{\cite[Proposition 5.8.6]{gangolli1988harmonic}}]\label{definition of theta}
    For a proper subset $F\subset \Sigma_s^+$, let
    \[
    \theta_\lambda(m)=\int_{K_F}\e^{(i\lambda-\rho_F)(H(mk))}dk
    \]
be the elementary spherical functions on $M_{1F}$ corresponding to $\lambda\in \mfa_{\C}^*$.
Let $\lambda\in \mfa^*$ be regular. Then we set
    \[
    \psi_\lambda(m)=\abso{\mfw_F}^{-1} \sum_{s\in \mfw} (\HCc(s\lambda)/\HCc_F(s\lambda))\theta_{s\lambda}(m)
    \]
    for all $m\in M_{1F}$. 
Denote also $\tau_F(H)=\min_{\alpha\in \Sigma^+\backslash \Sigma_F^+}\abso{\alpha(H)}$ for $H\in \mfa$. 
\end{Notations}
\begin{theorem}[\text{\cite[Theorem 5.9.3 (a)]{gangolli1988harmonic}}]\label{thm593a}
    Suppose that $F\subsetneq \Sigma_s^+$. Then there is a constant $C>0$ such that for all $\lambda\in \mfa^*$ and $a\in \CA$ with $\tau_F(\log a)\ge 1$
    \[
    \abso{\e^{\rho(\log a)}\varphi_\lambda(a)-\e^{\rho_F(\log a)}\psi_\lambda(a)}\le C(1+\abso{\lambda})^{6(n-l)+1} (1+\abso{\log a})^{6(n-l)+1}\e^{-2\tau_F(\log a)}.
    \]
\end{theorem}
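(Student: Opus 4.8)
The plan is to compare the Harish--Chandra series of $\varphi_\lambda$ with the analogous series for the rank--$|F|$ spherical functions $\theta_\lambda$ on $M_{1F}$: the block of terms of the $\varphi_\lambda$--series supported on the root lattice of $\Sigma_F$ should reassemble, after normalization by $\e^{\rho_F}$, into $\psi_\lambda$, and every remaining term carries a factor $\e^{-2\tau_F(\log a)}$ because its exponent contains a positive root of $\Sigma^+\setminus\Sigma_F^+$ with coefficient at least $2$. The polynomial factors in $\abso{\lambda}$ and $\abso{\log a}$ then come from the standard bounds on the series coefficients $\Gamma_\mu$ and on $\HCc$, together with the (non--optimized) summation of the tail.

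\emph{Step 1: the two expansions.} For regular $\lambda\in\mfa^*$ and $H\in\mfa^+$ one has the convergent expansion
\[
\e^{\rho(H)}\varphi_\lambda(\exp H)=\sum_{s\in\mfw}\HCc(s\lambda)\,\e^{is\lambda(H)}\sum_{\mu\in 2\Lambda}\Gamma_\mu(s\lambda)\,\e^{-\mu(H)},
\]
where $\Lambda$ is the monoid of nonnegative integral combinations of the simple roots, $\Gamma_0\equiv1$, and the $\Gamma_\mu$ obey the Gangolli--Varadarajan recurrence and the uniform polynomial bounds $\abso{\Gamma_\mu(\lambda)}\le C(1+\abso{\mu})^{A}(1+\abso{\lambda})^{B}$ (\cite[Ch.~4]{gangolli1988harmonic}). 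Running the same expansion inside the reductive group $M_{1F}$ produces the identical formula for $\e^{\rho_F(H)}\theta_\lambda(\exp H)$, with $\mfw$ replaced by $\mfw_F$, $\HCc$ by $\HCc_F$, and $\mu$ ranging only over the sub--monoid $2\Lambda_F$ generated by $F$. Inserting this into the defining formula $\psi_\lambda=\abso{\mfw_F}^{-1}\sum_{s\in\mfw}(\HCc(s\lambda)/\HCc_F(s\lambda))\theta_{s\lambda}$ of \Cref{definition of theta} and using $\rho-\rho_F=\tfrac12\sum_{\alpha\in\Sigma^+\setminus\Sigma_F^+}m_\alpha\alpha$, one should identify $\e^{\rho_F(H)}\psi_\lambda(\exp H)$ with the sub--sum of the $\varphi_\lambda$--expansion over $\mu\in 2\Lambda_F$. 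Proving this identification — reindexing over the $\mfw_F$--cosets in $\mfw$, checking that the leading factors recombine, and verifying that $\Gamma_\mu$ for $\mu\in 2\Lambda_F$ is exactly the corresponding $M_{1F}$--coefficient (compatibility of the two recurrences on $\Lambda_F$) — is the first genuine task.

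\emph{Step 2: the remainder.} Subtracting, $\e^{\rho(H)}\varphi_\lambda(\exp H)-\e^{\rho_F(H)}\psi_\lambda(\exp H)$ is the sum over $s\in\mfw$ and $\mu\in 2\Lambda\setminus2\Lambda_F$ of $\HCc(s\lambda)\Gamma_\mu(s\lambda)\,\e^{is\lambda(H)}\e^{-\mu(H)}$. Writing $\mu=2\widetilde\mu$ with $\widetilde\mu\in\Lambda$, the condition $\mu\notin2\Lambda_F$ forces $\widetilde\mu$ to have a positive coefficient on some simple root $\alpha_i\notin F$, hence on some $\alpha_i\in\Sigma^+\setminus\Sigma_F^+$; therefore $\mu$ has coefficient $\ge2$ on $\alpha_i$ and $\mu(H)\ge2\alpha_i(H)\ge2\tau_F(H)$ for $H\in\Ca$. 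Bounding $\abso{\HCc(s\lambda)}\lesssim(1+\abso{\lambda})^{(n-l)/2}$ by \eqref{estiofHCc} and \eqref{c-conjugate}, pulling $\e^{-2\tau_F(H)}$ out front, and using the bounds on $\Gamma_\mu$, one is left with $\e^{-2\tau_F(H)}(1+\abso{\lambda})^{(n-l)/2+B}$ times $\sum_{\mu}(1+\abso{\mu})^{A}\e^{-(\mu-2\alpha_i)(H)}$. The delicate point is that $\mu-2\alpha_i$ may be supported entirely on $F$, so $\e^{-(\mu-2\alpha_i)(H)}$ need not be small — recall $\log a$ is kept away only from the walls indexed by $\Sigma^+\setminus\Sigma_F^+$, not from those in $\Sigma_F^+$ — and this is handled as in \cite[\S\S5.8--5.9]{gangolli1988harmonic} by a descent argument inside $M_{1F}$, at the price of a polynomial loss $(1+\abso{\log a})^{N}$. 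Collecting exponents yields the stated bound $C(1+\abso{\lambda})^{6(n-l)+1}(1+\abso{\log a})^{6(n-l)+1}\e^{-2\tau_F(\log a)}$; these powers are not sharp, and the optimal value of the $\abso{\log a}$--exponent is exactly the refinement carried out in \cite[Cor.~3.4]{KuznetsovaSong}.

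\emph{Main obstacle.} The hard part is precisely the uniformity in $\log a$ near the walls $\{\alpha(\log a)=0:\alpha\in\Sigma_F^+\}$: term--by--term subtraction of the two series cannot work there, since the Harish--Chandra series converges rapidly only deep inside $\mfa^+$, so one must treat the ``$F$--directional'' part of the expansion intrinsically on $M_{1F}$ while simultaneously extracting the full $\e^{-2\tau_F}$ decay in the transverse directions; balancing these two effects is the technical heart of the argument.
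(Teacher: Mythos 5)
The paper does not prove this statement at all: it is imported verbatim from \cite[Theorem 5.9.3(a)]{gangolli1988harmonic} (with the constant refined in \cite[Corollary 3.4]{KuznetsovaSong}), so your proposal has to stand on its own, and as it stands it has a genuine gap which you have in fact pointed at yourself. The hypothesis $\tau_F(\log a)\ge 1$ keeps $\log a$ away only from the walls attached to $\Sigma^+\setminus\Sigma_F^+$; the point $\log a$ may lie on or arbitrarily close to the walls $\alpha(\log a)=0$, $\alpha\in F$, and there the Harish--Chandra series you subtract in Step 2 is not uniformly convergent (on the $F$-walls themselves it is not available at all, since the expansion is valid on $\mfa^+$). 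Concretely, after you pull out $\e^{-2\tau_F(\log a)}$ the remaining sum $\sum_{\mu}(1+\abso{\mu})^{A}\e^{-(\mu-2\alpha_i)(\log a)}$ contains infinitely many terms whose exponent is supported on $\Sigma_F^+$, and for such terms the exponential gives no decay near the $F$-walls, so the sum cannot be bounded by a polynomial in $\abso{\log a}$ by termwise estimates. Deferring exactly this point, together with the Step 1 identification of the $2\Lambda_F$-block with $\e^{\rho_F}\psi_\lambda$, to ``a descent argument as in \cite[\S\S 5.8--5.9]{gangolli1988harmonic}'' is circular: those sections are where the theorem you are proving is actually established, and the argument there is not a rearrangement of series near the walls but a differential-equations one (radial components of invariant operators yield a perturbed first-order system which is integrated along directions transverse to the $F$-walls, and a Gronwall-type estimate produces the uniform factor $\e^{-2\tau_F(\log a)}$ with only polynomial loss in $\abso{\lambda}$ and $\abso{\log a}$).

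A second concrete problem is the uniformity in $\lambda$. The theorem is asserted for all $\lambda\in\mfa^*$, including singular $\lambda$, but your termwise bound $\abso{\HCc(s\lambda)}\lesssim(1+\abso{\lambda})^{(n-l)/2}$ is false near the hyperplanes $\ip{\lambda,\alpha}=0$: by the Gindikin--Karpelevich formula \eqref{gindikin-k}, $\HCc(s\lambda)$ has poles there (it is $\HCc^{-1}$, not $\HCc$, that estimate \eqref{estiofHCc} and \Cref{estimate of c function} control). Each individual term of your subtracted series therefore blows up as $\lambda$ approaches a singular point, while the left-hand side $\e^{\rho(\log a)}\varphi_\lambda(a)-\e^{\rho_F(\log a)}\psi_\lambda(a)$ stays bounded only because of cancellations across the Weyl group (indeed $\psi_\lambda$ is defined in \Cref{definition of theta} for regular $\lambda$ and extended by continuity). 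So a proof along your lines would additionally have to organize the Weyl-group cancellation before estimating, which the proposal does not do. In short, the outline correctly locates the difficulties but does not resolve either of them, and the route it suggests (series comparison plus an appeal to the source) does not reproduce the uniform estimate claimed.
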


\subsection{Distinguished Laplacian}\label{sec-dist-lapl}
The Killing form $B$ is positive definite on $\mfp$ and admits a Riemannian structure on $G/K$ and the Laplace--Beltrami operator $\Delta$. It has also a coordinate description as described below.

The bilinear form $B_\theta: (X,Y)\mapsto -B(X,\theta Y)$ is positive definite on $\mfg$ and the Iwasawa decomposition $\mfg=\mfk\oplus\mfa\oplus\mfn$ is orthogonal with respect to it. We choose and orthonormal basis $(H_1,\dots,H_l,X_1,\dots,X_{\dim \mfg})$ in $\mfg$ so that $(H_1,\dots,H_l)$ is a basis of $\mfa$ and $(X_1,\dots,X_{n-l})$ a basis of $\mfn$. With these viewed as left-invariant vector fields on $\mfg$, we have then \cite{bougerol1983exemples}
$$
\Delta = \sum_{i=1}^{l} H_i^2+2\sum_{j=1}^{n-l} X_j^2+\sum_{i=1}^l H_i \cdot (H_i\delta)(e).
$$
For a function $f$ on $G$, set $\tau (f) = \check f$ and $\check f(s):=f(s^{-1})$. Setting $-\widetilde X = \tau \circ X \circ \tau$ for $X\in\mfg$, we get a right-invariant vector field; the operator $\cal L$ defined by
$$
-\cal L = \sum_{i=1}^{l}\widetilde H_i^2+2\sum_{j=1}^{n-l} \widetilde X_j^2
$$
is a distinguished Laplace operator on $S$ and 
has a special relationship with $\Delta$~\cite{cowling1994spectral} :
\[
\delta^{-1/2}\circ\tau \cal L \tau\circ\delta^{1/2} = -\Delta - \abso{\rho}^2=: \Delta_\rho.
\]
The shifted operator $\Delta_\rho$ has the spectrum $[0, \infty)$. Both $\Delta_\rho$ and $\cal L$ extend to positive and self-adjoint operators on $L^2(S, d_l)$, with the same notations for their extensions.

Let $F$ be a bounded Borel function on $[0, \infty)$. 
We can define bounded operators $F(\cal L)$ and $F(\Delta_\rho)$ on $L^2(S)$ via Borel functional calculus.  
Note that $\cal L$ is right-invariant while $\Delta_\rho$ is left-invariant, we can define $k_\LB$ and $k_\La$ to be the kernels of $F(\Delta_\rho)$ and $F(\cal L)$ respectively, a priori distributions, such that 
\begin{equation}\label{kernel of functions}
    \begin{split}
        F(\Delta_\rho)f=f *k_\LB \quad  
        \text{and}\quad  
        F(\cal L)f=k_\La*f, \quad f\in L^2(S).
    \end{split}
\end{equation}
The connection between $\cal L$ and $\Delta$ implies that
\begin{equation}\label{relation of kernels}
	k_\LB=\delta^{1/2} k_\La,
\end{equation}
which was pointed out by Giulini and Mauceri \cite{giulini1993analysis} (also see \cite{bougerol1983exemples}).
It is known that $k_\LB$ is $K$-biinvariant. Thus, not only $k_\LB$ but $k_\La$ can be regarded as a function on $G$ by the formula above.
By inverse formula for the spherical transform, we obtain the exact formula of the kernel as follows:
\begin{align}\label{integral formula of kernel}
    k_\LB(x)=C\int_{ \mfa^*}F(|\lambda|^2)\varphi_\lambda(x) \abso{\HCc (\lambda)}^{-2}d\lambda, \quad x\in G.
\end{align}

\subsection{The method of stationary phase}\label{sec-stat-phase}

This is a well-known method of study of oscillating integrals of the type 
\[
I(x)=\int_a^b g(t) \e^{ixf(t)} dt
\]
with real  $a, b, x, f(t)$ and complex $g(t)$. We follow the treatment due to \cite{Erdelyi1956} in the exposition of \cite[II.3]{Wong2001}. We will need precise estimates of remainders, for this reason we go into certain details of the proof.

We assume the following conditions:
\begin{enumerate}[(a)]
    \item $f'(t)> 0$ in $(a,b) $ and $f(t)$ has the form of \[
    f(t)=f(a)+(t-a)^p f_1(t),\quad f_1(a)\neq 0 
    \] 
    where $p \ge 1$ and $f_1(t)$ is infinitely differentiable on $[a,b]$.
    \item $g$ is infinitely 
    differentiable on $[a,b]$.
\end{enumerate}

With a new variable $u$ such that 
\begin{itemize}
    \item $u^p=f(t)-f(a),\ u\in [0,B]$;
    \item $B^p=f(b)-f(a) $;
    \item $q(u)=g(t)t'(u),\ u\in [0,B]$, and extended arbitrarily to a compactly supported function in $C^{\infty}([0, \infty))$;
    \item $q_1(v)=v^{\frac{1}{p} - 1}q(v^{\frac{1}{p}}),\ v\in [B^p, \infty)$,
\end{itemize}
the integral takes form
$$
I(x) = e^{ixf(a)} \int_0^B q(u) e^{ix u^p} du = e^{ixf(a)} [I_1(x) - I_2(x)],
$$
where each of the integrals
\begin{align*}
I_1(x) &= \int_0^\infty q(u) e^{ix u^p} du,
\\ I_2(x) &= \int_B^\infty q(u) e^{ix u^p} du = \frac1p \int_{B^p}^\infty q_1(v) e^{ix v} dv
\end{align*}
is treated by partial integration. For $I_2(x)$, this is not a problem, while for $I_1(x)$ the following auxiliary functions are constructed \cite[(3.14-3.16)]{Wong2001}: $k_0(u) = e^{ixu^p}$, and for $n\ge1$
\begin{equation}\label{k_n}
k_n(u) = \frac{ (-1)^n} {(n-1)!} \int_u^{u + \infty e^{i\pi/2p} } (z-u)^{n-1} e^{ix z^p} dz,
\end{equation}
with integration performed over the ray $\arg (z-u) = \pi/2p$. It follows that $k_n'(u) = k_{n-1}(u)$ and
\begin{equation}\label{k_n(0)}
k_n(0) = \frac{ (-1)^n} {(n-1)!\,p } \,\Gamma\Big( \frac np \Big)\, e^{i\pi n/2p} x^{-n/p}.
\end{equation}
We note also that \cite[(3.17)]{Wong2001}
\begin{equation}\label{kn(u)}
| k_n(u)| \le \frac1{ (n-1)!\, p} \, \Gamma \Big( \frac np \Big) \, x^{ - n/p}.
\end{equation}

The asymptotics of $I(x)$ is described now by the following theorem:
\begin{theorem}\label{Stationary phase}
    \begin{equation}
    \begin{split}
        I(x)=\e^{ixf(a)}[I_1(x)-I_2(x)]
    \end{split}
\end{equation}
where for any $N\ge1$, $M\ge 1$
\begin{align}
    I_1(x) &= \sum_{n=0}^{N-1} \frac{1}{n!\,p} \,\Gamma\bracket{\frac{n+1}{p}} q^{(n)}(0) \e^{\frac{i\pi (n+1)}{2p}} x^{-\frac{n+1}{p}}+ R_N^{(1)}(x),\\
    I_2(x) &= \e^{ix(f(b)-f(a))} \sum_{n=0}^{M-1} \frac{1}{p} \,q_1^{(n)}(B^p)\bracket{\frac{i}{x}}^{n+1}+ R_M^{(2)}(x)
\end{align}
and the remainders are
\begin{align}
    \begin{split}\label{remainders-general-th}
        R^{(1)}_{N}(x) &= (-1)^{N+1} \Big[ q^{(N)}(0) k_{N+1}(0)  + \int_0^\infty q^{(N+1)}(u) k_{N+1}(u) du  \Big],
        \\ R_M^{(2)}(x) &= \frac{1}{p}\,\bracket{\frac{i}{x}}^M \int_{B^p}^\infty q_1^{(M)}(v) \e^{ixv} dv,
    \end{split}
\end{align}
estimated as
\begin{align}
    \begin{split}\label{remainders-bounds}
        R^{(1)}_{N}(x) & = O(x^{-(N+1)/p}),
        \\ R_M^{(2)}(x) &= o(x^{-M})
    \end{split}
\end{align}
as $x\to+\infty$.
\end{theorem}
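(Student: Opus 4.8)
The plan is to follow the classical stationary phase reduction carefully, tracking remainders exactly, since the statement of \Cref{Stationary phase} is really just a bookkeeping assembly of the construction already laid out before it. First I would establish the change of variables $u^p = f(t)-f(a)$. Condition (a) guarantees $f(t)-f(a) = (t-a)^p f_1(t)$ with $f_1(a)\neq 0$ and $f_1$ smooth; since $f'>0$ on $(a,b)$, actually $f_1 > 0$ there, so $u = (t-a) f_1(t)^{1/p}$ is a smooth, strictly increasing bijection $[a,b]\to[0,B]$ with $B^p = f(b)-f(a)$, and its inverse $t = t(u)$ is smooth on $[0,B]$ (one checks $du/dt \neq 0$ on the closed interval, including at $t=a$ where it equals $f_1(a)^{1/p}$). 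This yields $I(x) = e^{ixf(a)}\int_0^B q(u) e^{ixu^p}\,du$ with $q(u) = g(t(u))\,t'(u)$, which we extend to a compactly supported $C^\infty$ function on $[0,\infty)$; splitting $\int_0^B = \int_0^\infty - \int_B^\infty$ gives $I_1 - I_2$ as stated.

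For $I_1(x)$, I would use the auxiliary functions $k_n$ from \eqref{k_n}, which satisfy $k_n' = k_{n-1}$, $k_0(u) = e^{ixu^p}$, the explicit value \eqref{k_n(0)}, and the bound \eqref{kn(u)}. Integrating by parts $N$ times,
\[
I_1(x) = \int_0^\infty q(u) k_0(u)\,du = \sum_{n=0}^{N-1} (-1)^n q^{(n)}(0)\,k_{n+1}(0)\,(-1) + (-1)^N \int_0^\infty q^{(N)}(u)\,k_N(u)\,du,
\]
where the boundary terms at $u=\infty$ vanish because $q$ is compactly supported. Doing one more integration by parts on the remaining integral isolates the $R_N^{(1)}$ term in the form displayed in \eqref{remainders-general-th}; substituting \eqref{k_n(0)} into the sum produces exactly the asserted series with $\Gamma((n+1)/p)\,q^{(n)}(0)\,e^{i\pi(n+1)/2p}\,x^{-(n+1)/p}/(n!\,p)$. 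For the remainder estimate in \eqref{remainders-bounds}, apply \eqref{kn(u)} with index $N+1$: both $|q^{(N)}(0)\,k_{N+1}(0)|$ and $\int_0^\infty |q^{(N+1)}(u)|\,|k_{N+1}(u)|\,du$ are bounded by a constant times $x^{-(N+1)/p}$, giving $R_N^{(1)}(x) = O(x^{-(N+1)/p})$.

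For $I_2(x)$, I would rewrite it as $\frac1p\int_{B^p}^\infty q_1(v) e^{ixv}\,dv$ via $v = u^p$, where $q_1(v) = v^{1/p-1} q(v^{1/p})$ is smooth on $[B^p,\infty)$ (note $B>0$, so we stay away from the origin) and compactly supported. Repeated integration by parts in $v$, with $e^{ixv}$ integrated each time to $e^{ixv}/(ix)$, produces the boundary terms at $v=B^p$ — which assemble into $e^{ix(f(b)-f(a))}\sum_{n=0}^{M-1} \frac1p q_1^{(n)}(B^p)(i/x)^{n+1}$ after accounting for signs — plus the remainder $R_M^{(2)}(x) = \frac1p (i/x)^M \int_{B^p}^\infty q_1^{(M)}(v) e^{ixv}\,dv$. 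Since $q_1^{(M)}$ is integrable, the Riemann--Lebesgue lemma forces the integral to tend to $0$ as $x\to+\infty$, hence $R_M^{(2)}(x) = o(x^{-M})$.

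I do not anticipate a genuine obstacle here — the only points requiring care are the smoothness and positivity of the change of variables at the endpoint $t=a$ (so that $q$ is genuinely $C^\infty$ up to $0$, which is what legitimizes all the integrations by parts and the appearance of $q^{(n)}(0)$) and the justification that the contour-shifted definition \eqref{k_n} of $k_n$ is consistent with $k_n' = k_{n-1}$ and the closed-form values, for which I would cite \cite{Wong2001} directly rather than re-derive. Everything else is the standard Erdélyi-type argument, and the slight novelty the paper needs — that the $O$ and $o$ constants are explicit and uniform enough to later be used for \emph{non-large} values of the parameter — is already encoded in the explicit bounds \eqref{k_n(0)} and \eqref{kn(u)}, so I would simply remark that the implied constants in \eqref{remainders-bounds} depend only on finitely many sup-norms of derivatives of $q$ and $q_1$.
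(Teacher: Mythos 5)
Your proposal is correct and follows essentially the same route the paper relies on: the paper states this theorem by setting up the Erd\'elyi--Wong machinery (the change of variable $u^p=f(t)-f(a)$, the extension of $q$, and the auxiliary functions $k_n$ with \eqref{k_n(0)} and \eqref{kn(u)}) and citing \cite[II.3]{Wong2001}, and your repeated integration by parts for $I_1$ (with one extra step to isolate $R_N^{(1)}$ in the form \eqref{remainders-general-th}) and for $I_2$ (with Riemann--Lebesgue giving $o(x^{-M})$) is exactly that argument, with the signs and constants checking out.
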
 
We cite \eqref{remainders-bounds} for completeness, but we will actually need the exact formulas \eqref{remainders-general-th}.
\section{Oscillating integrals with a radial factor}\label{Oscillating integrals with a radial factor}
This section is devoted to the study of integrals of the type
\[
    I_\Psi(H): = \int_{\mfa^*} \Psi(|\l|) e^{i\l(H) } \sigma(\l) d \l
\]
with a fixed $H\in \mfa$. The convolution kernel of a function $\Psi(\Delta_\rho)$ of the Laplace-Beltrami operator can be approximated by such integrals, with $\sigma(\l)=|\HCc(\l)|^{-2}$. In the present section, we write $I_\Psi$ in polar coordinates and apply the stationary phase method to find the asymptotics for a fixed radius $r$. The main theorem is \Cref{main theorem}. It is stated with vague assumptions on $\Psi$, which are made explicit in \Cref{assump-on-psi-basic}. Moreover, first estimates of remainders are obtained in \Cref{rem-basic-estimate}, and finally, more precise estimates are elaborated in \Cref{subsec-deriv-in-r} to be used in further analysis.

\subsection{Stationary phase analysis for a fixed radius}

This section is devoted essentially to a detailed analysis of radial averages of the Harish-Chandra $\HCc$-function. The group in question is not necessarily semisimple: the results hold for any reductive group. To be precise, we understand by this term a group of class $\mathcal {H}$ as defined in the book of Gangolli and Varadarajan \cite{gangolli1988harmonic}.

We cannot use existing theorems on multi-dimensional stationary phase method since our stationary points would not be isolated. 
We do thus a two-step analysis: first for a fixed $r$ in polar coordinates, and then integrating by $r$, using the radial property of our functions. \Cref{main theorem} below is the first step, independent of the function $\Psi$. The second part cannot be done in a general form; \Cref{Global_estimates} contains analysis of functions related to the wave equation, and other cases can be considered similarly. We need therefore to keep much detail of the functions appearing in the asymptotics, what explains references to multiple formulas in the statement.

In the next theorems, the function $\sigma$ satisfies the same estimates as $|\HCc(\l)|^{-2}$. This is indeed our main case, but it will be also applied in \Cref{Global_estimates} to $\sigma(\l) = |\overline{\HCc(\l)}\HCc_F(\l)|^{-1}$ where $\HCc_F$ is the $\HCc$-function of a parabolic subgroup. Thus, the degree $a$ appearing in the statement can be equal to $2d$ or to $d+d_F$ (see \eqref{estiofHCc} and \eqref{estiofHCcF}, as well as surrounding discussions).

To ensure a more concise statement of the theorems, we assume that $\sigma$ is a fixed function on~$\mfa^*$ satisfying the following:
\begin{assumptions}\label{assumption for sigma}
    Let $\sigma$ be a complex-valued smooth function on $\mfa^*$ satisfying for all $\l\in \mfa^*$
    \begin{enumerate}
    \item given any multi-index $m$, $\abso{\partial^{m} \sigma(\l)} \le C_m (1+|\l|)^{n-l}$;
    \item there exists an integer $a\ge0$ such that $\partial^m \sigma (0)=0$ for every multi-index $m$ with $|m|<a$.
    \end{enumerate}
\end{assumptions}
\begin{remark}
    \leavevmode
    \begin{enumerate}
        \item It follow directly from \Cref{estimate of c function} that $|\HCc(\l)\HCc_F(\l)|^{-1}$ satisfies the first assumption.
        \item Regarding the second assumption, we observe from the Gindikin-\linebreak Karpelevich formula \eqref{gindikin-k} for $\HCc$ that every factor takes finite nonzero values at $\l=0$, except $\Gamma(i\ip{\lambda,\alpha_0})$. The gamma function has a simple pole at zero, thus $\HCc(\l)^{-1} = \prod_{\alpha\in \Sigma_r^+} \ip{ \lambda, \alpha} g(\l)$ with a holomorphic function $g$. A similar expression is valid for $\HCc_F^{-1}$. It is clear now that $|\HCc|^{-2}$ and $\big(\!\HCc_F \overline{\HCc})^{-1}$ satisfy the second assumption, with the corresponding integer $a$ taken to be $2d$ or $d+d_F$ respectively.
    \end{enumerate}
\end{remark}

\begin{theorem}\label{main theorem}
    Let $\Psi:[0,+\infty)\to\C$ be a measurable function on $[0,\infty)$.
    For a unit vector $E\in \Ca$ and $h\ge0$, we consider the integral
    \begin{align}\label{I_Psi-in-the-theorem-initial-form}
    I_\Psi(h): = \int_{\mfa^*} \Psi(|\l|^2) e^{ih\l(E) } \sigma(\l) d \l.
    \end{align}
    Then, assuming all integrals converge,
    \begin{align}\label{I_Psi-in-the-theorem}
    \begin{split}
        I_\Psi(h) &=
        C_l\, h^{(1-l)/2} \int_0^\infty \Psi(r^2)
        \big[ \e^{i\pi(l-1)/4-ihr} \sigma(rE) 
        \\&\quad+ \e^{-i\pi(l-1)/4+ihr} \sigma(-rE)\big] r^{(l-1)/2} dr 
         +\int_0^\infty \Psi(r^2) r^{l-1} R(h,r) dr,
    \end{split}
    \end{align}
    where the remainder $R(h,r)$ is given by \eqref{remainders} and the constant $C_l$ depends on $l$ only.
\end{theorem}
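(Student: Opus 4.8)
The plan is to pass to polar coordinates on $\mfa^*\cong\R^l$, to reduce the angular integral over the sphere $S^{l-1}$ to a one-dimensional oscillatory integral whose phase is degenerate of order $2$ at each endpoint, and then to invoke \Cref{Stationary phase} with $p=2$.

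\emph{Reduction to a sphere integral.} Write $\l=r\omega$ with $r=|\l|\ge0$ and $\omega$ in the unit sphere $S^{l-1}$ of $\mfa^*$, so that $d\l=r^{l-1}\,dr\,d\omega$ and $\l(hE)=hr\,\langle\omega,E\rangle$, the Killing form identifying $\mfa$ with $\mfa^*$. By Fubini (legitimate once the integrals converge, as assumed),
$$
I_\Psi(h)=\int_0^\infty\Psi(r)\,r^{l-1}\,J(hr,r)\,dr,\qquad J(x,r):=\int_{S^{l-1}}e^{ix\langle\omega,E\rangle}\,\sigma(r\omega)\,d\omega,
$$
so it suffices to establish, for fixed $r>0$ and $x=hr$, an expansion of $J(x,r)$ whose leading part equals $C_l\,x^{(1-l)/2}$ times the bracket of \eqref{I_Psi-in-the-theorem}; multiplying by $r^{l-1}$ and using $x^{(1-l)/2}=h^{(1-l)/2}r^{(1-l)/2}$ then reproduces \eqref{I_Psi-in-the-theorem}, with $R(h,r)$ collecting the lower-order terms. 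For $l=1$ the ``sphere'' $S^0$ is the pair $\{\pm E\}$ and $J(x,r)=e^{ix}\sigma(rE)+e^{-ix}\sigma(-rE)$ outright, so from now on $l\ge2$.

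\emph{Reduction to one dimension.} In geographic coordinates on $S^{l-1}$ about the pole $E$, namely $\omega=(\cos\phi)E+(\sin\phi)\eta$ with $\phi\in[0,\pi]$ and $\eta$ in the unit sphere $S^{l-2}$ of $E^\perp$, one has $\langle\omega,E\rangle=\cos\phi$, $d\omega=\sin^{l-2}\phi\,d\phi\,d\eta$, and
$$
J(x,r)=\int_0^\pi e^{ix\cos\phi}\,b_r(\phi)\,d\phi,\qquad b_r(\phi):=\sin^{l-2}\phi\int_{S^{l-2}}\sigma\bigl(r(\cos\phi)E+r(\sin\phi)\eta\bigr)\,d\eta.
$$
Differentiating under the (compact) integral sign, the latitudinal average $\Sigma_r(\phi):=\int_{S^{l-2}}\sigma(r\cos\phi\,E+r\sin\phi\,\eta)\,d\eta$ is $C^\infty$ on $\R$, and the reflection $\eta\mapsto-\eta$ shows it is even about $\phi=0$ and about $\phi=\pi$; hence $b_r\in C^\infty([0,\pi])$, it vanishes to order $l-2$ at each endpoint, and its leading Taylor coefficients there are $\Sigma_r(0)=|S^{l-2}|\,\sigma(rE)$ and $\Sigma_r(\pi)=|S^{l-2}|\,\sigma(-rE)$. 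Crucially, the chain rule expresses every $\phi$-derivative of $b_r$ as a combination of derivatives $\partial^m\sigma$ evaluated on the sphere $r\,S^{l-1}\subset\mfa^*$ times powers of $r$; this quantitative dependence on $r$ is exactly what will later be fed into the $r$-integration, and is the reason the remainder must be kept in the explicit form \eqref{remainders-general-th} rather than in the bare form \eqref{remainders-bounds}.

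\emph{Stationary phase at the two poles.} Choose $\chi\in C^\infty([0,\pi])$ with $\chi\equiv1$ near $0$ and $\chi\equiv0$ near $\pi$, and split $J(x,r)=\int_0^\pi e^{ix\cos\phi}(\chi b_r)\,d\phi+\int_0^\pi e^{ix\cos\phi}\bigl((1-\chi)b_r\bigr)\,d\phi$. In the first integral the weight vanishes near $\phi=\pi$, so it is an integral over $[0,b]$ with $b<\pi$ whose weight vanishes near $b$; after the harmless conjugation replacing the phase $\cos\phi$ by $x(1-\cos\phi)$, one has $1-\cos\phi=\phi^2 f_1(\phi)$ with $f_1(0)=\tfrac12\ne0$ and $\tfrac{d}{d\phi}(1-\cos\phi)>0$ on $(0,b)$, which is precisely the hypothesis of \Cref{Stationary phase} with $p=2$. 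Since the weight vanishes near the upper endpoint, the $I_2$ part vanishes; the $I_1$-expansion has its first nonzero term at $n=l-2$ (as $b_r$ vanishes to order $l-2$), and that term produces, through \eqref{k_n(0)} and the leading Taylor coefficient above, the first term of the bracket in \eqref{I_Psi-in-the-theorem} (the one carrying $\sigma(rE)$), with constant $C_l=(2\pi)^{(l-1)/2}$; the terms $n>l-2$ are of lower order in $x$, and the remainder $R_N^{(1)}$ of \eqref{remainders-general-th} goes into $R(h,r)$. The second integral is handled symmetrically after the reflection $\phi\mapsto\pi-\phi$ and produces the $\sigma(-rE)$-term. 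Adding the two and inserting into $\int_0^\infty\Psi(r)\,r^{l-1}(\cdot)\,dr$ yields \eqref{I_Psi-in-the-theorem}, with $R(h,r)$ the sum of all remainder contributions.

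\emph{Where the difficulty lies.} The real content is the combination of the last two steps: one must recognize that the curvature of $S^{l-1}$ at the two critical points $\pm E$ of the phase $\omega\mapsto\langle\omega,E\rangle$ manifests, in the latitude variable, as a phase $1-\cos\phi\sim\phi^2$ that is degenerate of order $p=2$ against a weight vanishing to order $l-2$, so that \Cref{Stationary phase} — designed for a monomial phase times a smooth weight — applies with $p=2$ and not $p=1$, and it is the \emph{first} Taylor term of $b_r$ that carries the value $\sigma(\pm rE)$ and the exponent $x^{-(l-1)/2}$. The second, more laborious half — and the reason Section~\ref{Oscillating integrals with a radial factor} does not stop here — is to keep the remainders in the exact form \eqref{remainders-general-th} and to trace, through $\Sigma_r$ and the chain rule, how they and their $r$-derivatives depend on $\partial^m\sigma$ and on $r$, so that the subsequent integration in $r$ against $\Psi$ can be carried out uniformly; the plain estimate \eqref{remainders-bounds} is of no use for that. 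Minor points are the vanishing of the spurious endpoint at the cut of $\chi$ (arranged by the cutoff, which kills the $I_2$ terms) and the degenerate ranks $l=1,2$ (the latter covered by the general argument, since then $b_r=\Sigma_r$ does not vanish at the endpoints and the leading term is simply $n=0$).
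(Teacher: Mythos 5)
Your proposal is correct and, at its core, follows the same strategy as the paper: pass to polar coordinates, reduce to a one-dimensional oscillatory integral in the latitude with phase $hr\cos\phi$ and weight $\sin^{l-2}\phi$ times a latitudinal average of $\sigma$ (the paper's $D_r$, your $\Sigma_r$), and apply \Cref{Stationary phase} with $p=2$ at each pole, the main term coming from the order-$(l-2)$ Taylor coefficient; your constant $C_l=(2\pi)^{(l-1)/2}$ agrees with what the paper's computation gives. The genuine difference is how the two poles are separated. The paper cuts the interval at $\phi=\pi/2$, so each half acquires a non-stationary endpoint; it must then fix explicit smooth extensions \eqref{q-extended}--\eqref{tilde-q-extended} of $q,\tilde q$ and prove that the endpoint contributions of the two halves cancel, namely \eqref{q1-deriv}, via the symmetry \eqref{q1-tilde-q1}; what survives of that boundary analysis is the term $R^{(2)}$ in \eqref{remainders}. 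You instead insert a smooth cutoff $\chi$, so the weight vanishes identically at the artificial endpoint, the $I_2$ parts of \Cref{Stationary phase} disappear, and no cancellation lemma is needed --- a cleaner derivation of the main term. The price is that your remainder is not literally the $R(h,r)$ of \eqref{remainders}: it carries $\chi$ inside the integral terms and has no $R^{(2)}$ piece, whereas the statement pins $R(h,r)$ to \eqref{remainders} and the later analysis (\Cref{rem-basic-estimate}, \Cref{Derivatives_in_r_of_the_remainders}, and especially \Cref{big-h-no-gap}, which reuses \eqref{q1-tilde-q1} to treat $R^{(2)}$ near $v=1$) leans on that exact structure. With your route those estimates would have to be restated for the cutoff-dependent remainder; this is routine, since $\chi$ is independent of $r$ and equals $1$ near the pole, so the data $q^{(k)}(0)$ and the bounds of type \eqref{qkth-derivative-in-r} are unchanged, but it is work your sketch defers --- and you are right that keeping the remainders in the exact form \eqref{remainders-general-th}, not the bare bound \eqref{remainders-bounds}, is precisely what the later sections require.
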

\begin{proof}
First, we change coordinates to simplify the phase. The scalar product given by the Killing form turns $\mfa$ into a Euclidean space of dimension $l$, which can be identified with its dual $\mfa^*$, and with $\R^l$. Let $(e_1,\dots,e_l)$ be the standard basis in $\R^l$, and choose $J$ to be a rotation matrix of size $l$ such that $J^{-1}E = e_1$. Then
\begin{equation}\label{integral-with-lambda1}
    I_\Psi(h) 
    = \int_{\mfa^*} \Psi(|J\l|^2) e^{ih\ip{J\l, E}} \sigma(J\l) d \l 
    = \int_{\mfa^*} \Psi(|\l|^2) e^{i\l_1 h } \sigma(J\l) d \l
\end{equation}
where $\lambda_1$ is the first coordinate of $\lambda$. For convenience, denote $\varsigma(\l):=\sigma(J\l)$; at this point, we note that $\varsigma$ is a smooth function on $\R^l$.

\smallskip
\noindent\textbf{Spherical Coordinates Formula.} 
We pass to polar coordinates:
\begin{equation}\label{polar-nots}
	\begin{split}
		\Theta &:= (\cos\theta_1,\ \sin \theta_1\cos \theta_2,\ \dots, \sin \theta_1\cdots \sin \theta_{l-2}\sin \theta_{l-1}),\\
    \lambda &:=(\lambda_1,\cdots, \lambda_l)=r \Theta, \ r\in[0,\infty),
	\end{split}
\end{equation}
with $0\le \theta_1,\dots, \theta_{l-2}\le \pi,\ 0\le \theta_{l-1}\le 2\pi$ and rewrite the integral as
    $$
    I_\Psi(h) = \int_0^\infty \Psi(r^2) \xi(r,h) dr
    $$
    where
    \begin{align*}
    \xi(r,h)
    &=\int_0^\pi  \dots \int_0^\pi  \int_0^{2\pi} 
    \e^{ihr\cos \theta_1} \varsigma( r\Theta ) J(l,r,\theta) d\theta_{l-1} d \theta_{l-2}\dots  d \theta_1
    \end{align*}
    with the Jacobian $J(l,r,\theta)= \bracket{\sin \theta_{l-2}}\dots  \bracket{\sin \theta_1}^{l-2}  r^{l-1}$.
    
    To make the notations less cumbersome, we introduce the function
\begin{equation}\label{Dr-def}
    D_r(\theta_1)= \int_0^\pi \cdots \int_0^\pi \int_0^{2\pi} \sin \theta_{l-2}\cdots \bracket{\sin \theta_2}^{l-3}\varsigma( r\Theta ) d\theta_{l-1}d \theta_{l-2} \cdots d \theta_2,
\end{equation}
then
\begin{equation}\label{xi(r,h)}
    \begin{split}
        \xi(r,h)=r^{l-1} \int_0^\pi \e^{ihr\cos \theta_1} \bracket{\sin \theta_1}^{l-2} D_r(\theta_1) d\theta_1.
    \end{split}
\end{equation}
We can now use the method of stationary phase to deal with $\xi(h,r)$ since $D_r$ is smooth on $[0, \pi]$.
The critical points of the phase $hr\cos\theta_1$ are both endpoints of our interval, $0$ and $\pi$. To put it into the form of \Cref{sec-stat-phase}, with an increasing phase having a single critical point at the left endpoint, we write, taking the conjugate and changing variables:
\[
r^{1-l} \xi(r, h)= \overline{I_0(h,r)} + I_\pi(h,r),
\]
where
\begin{align*}
    I_0(h,r) &=\int_0^{\pi/2} \e^{i(hr)(-\cos \theta_1)} \bracket{\sin \theta_1}^{l-2} \overline{D_r(\theta_1)} d\theta_1
    \\I_\pi(h,r) &=\int_0^{\pi/2} \e^{i(hr)(-\cos \theta_1)} \bracket{\sin \theta_1}^{l-2} D_r(\pi-\theta_1) d\theta_1.
\end{align*}

\noindent\textbf{Integral $I_0(h,r)$.}
In the notations of \Cref{sec-stat-phase}, we have
\begin{align*}
    f(\theta_1) &= -\cos \theta_1,\\
    g(\theta_1) &= \bracket{\sin \theta_1}^{l-2} \overline{D_r(\theta_1)}.
\end{align*}
The function $f$ is increasing on $(0,\frac{\pi}{2})$ and has its unique critical point at $0$. Its second derivative at $0$ is nonzero, so $p=2$. Other notations are:
\begin{align*}
B^p &= B^2 = f(\pi/2)-f(0)=1; \\
u^2 &= -\cos\theta_1+1,\quad u\in [0,1];\\
\theta_1(u) &= \arccos (1-u^2).
\end{align*}
We calculate also
\begin{align*}
\theta_1'(u) &= 2\bracket{2-u^2}^{-1/2},\\
\sin (\theta_1(u)) &= \sin (\arccos (1-u^2))= u(2-u^2)^{1/2},
\end{align*}
which allows to write
\begin{equation}\label{q(u)}
q(u)=g(\theta_1)\theta_1'(u) = 2 u^{l-2}\bracket{2-u^2}^{(l-3)/2} \overline{D_r(\arccos (1-u^2))}
\end{equation}
for $u\in [0,1]$. We will later extend it to $[B^2,+\infty) = [1,+\infty)$.
The function $q_1(v) = v^{-1/2} q(v^{1/2})$, defined on $[1,+\infty)$, will be discussed later.

Let us now determine the main term of the expansion to obtain. We can easily see that
\[
 q^{(k)}(0)=
\begin{cases}
   0&\ \text{if}\ 0\le k< l-2,\\
   2^{(l-1)/2}(l-2)! \overline{D_r(0)} &\ \text{if}\ k= l-2.
\end{cases}
\]
Moreover, $D_r(0)$ is easy to calculate: if $\theta_1=0$ then $\Theta = (1,0,\dots,0)=e_1$ and 
\begin{equation}
	\begin{split}
		D_r(0)
		&=\varsigma(re_1) \int_0^\pi \cdots \int_0^\pi \int_0^{2\pi} \sin \theta_{l-2}\cdots \bracket{\sin \theta_2}^{l-3} d\theta_{l-1}d \theta_{l-2} \cdots d \theta_2 
        \\&= \varsigma(re_1) S_{l-2}
	\end{split}
\end{equation}
where $S_{l-2}=(2\pi^{(l-1)/2} )/\Gamma((l-1)/2) >0$ is the surface area of the unit sphere in $\R^{l-1}$. We see that in general, this value is nonzero.

Applying \Cref{Stationary phase} with $N=l-1$ and $p=2$, we have, for every $M\ge1$, 
\begin{equation}\label{I_0}
    \begin{split}
        I_0(h,r) 
        &=\e^{-ihr}\Bracket{\frac{1}{(l-2)!2}\Gamma\bracket{(l-1)/2}q^{(l-2)}(0)\e^{\frac{i\pi(l-1)}{4}}(hr)^{\frac{-l+1}{2}}+R_{l-1}^{(1)}(h,r)}\\
        &\qquad -\Bracket{\sum_{n=0}^{M-1} \frac{1}{2} q_1^{(n)}(1)\bracket{\frac{i}{hr}}^{n+1}+ \e^{-ihr}R_M^{(2)}(h,r)}\\
        &= C_l \e^{i\bracket{\pi(l-1)/4-hr}} \overline{\varsigma(re_1)} (rh)^{(1-l)/2}+ \e^{-ihr}R_{l-1}^{(1)}(h,r)\\
        &\qquad -\Bracket{\sum_{m=0}^{M-1} \frac{1}{2} q_1^{(m)}(1)\bracket{\frac{i}{hr}}^{m+1}+ \e^{-ihr}R_M^{(2)}(h,r)},\\
    \end{split}
\end{equation}
with the formulas \eqref{remainders-general-th} for the remainders, in which $x=hr$, $p=2$ and $N=l-1$.

\smallskip
\noindent\textbf{Integral $I_\pi(h,r)$.}
We obtain the same $f, p, u, B$, while the functions $g$ and $q$ are replaced respectively by
\[
\tilde g(\theta_1)= \bracket{\sin \theta_1}^{l-2} D_r(\pi-\theta_1)
\]
and
\begin{equation}\label{tilde-q}
\tilde q(u)
=\tilde g(\theta_1) \theta_1'(u)
= 2 u^{l-2}\bracket{2-u^2}^{(l-3)/2} D_r(\pi - \arccos (1-u^2)).
\end{equation}
We find similarly
\[
\tilde q^{(k)}(0)=
\begin{cases}
   0&\ \text{if}\ 0\le k< l-2,\\
   2^{(l-1)/2}(l-2)! S_{l-2} \,\varsigma(-re_1)&\ \text{if}\ k= l-2,
\end{cases}
\] 
and then obtain
\begin{equation}
    \begin{split}\label{I_pi}
        I_\pi(h,r) 
        &=C_l \e^{i\bracket{\pi(l-1)/4-hr}} \varsigma(-re_1)(rh)^{(1-l)/2} +
        \e^{-ihr}\tilde R_{l-1}^{(1)}(h,r)\\
        &\qquad -\Bracket{\sum_{m=0}^{M-1} \frac{1}{2} \tilde q_1^{(m)}(1)\bracket{\frac{i}{hr}}^{m+1}+ \e^{-ihr}\tilde R_M^{(2)}(h,r)}
        \end{split}
\end{equation}
with any $M\ge1$. The remainders are given again by \eqref{remainders-general-th} with $x=hr$, $p=2$ and $N=l-1$, but $q$ and $q_1$ have to be replaced by $\tilde q$ and $\tilde q_1$ respectively.

Combining \eqref{I_0} and \eqref{I_pi}, we then obtain
\begin{equation}\label{xi-full}
    \begin{split}
        r^{1-l}\xi(h,r)
        &= C_l\,\Bracket{\e^{-i\bracket{\pi(l-1)/4-hr}} \varsigma(re_1) + \e^{i\bracket{\pi(l-1)/4-hr}} \varsigma(-re_1) } (rh)^{(1-l)/2}
        \\&\quad - \frac12 \sum_{m=0}^{M-1} \big[ (-1)^{m+1} \overline{ q_1^{(m)}(1) } + \tilde q_1^{(m)}(1) \big] \bracket{\frac{i}{hr}}^{m+1}
        \\&\quad +\e^{ihr}\overline{ R_{l-1}^{(1)}(h,r)}
        +\e^{-ihr} \tilde R_{l-1}^{(1)}(h,r) 
        \\&\quad - \e^{ihr} \overline{ R_M^{(2)}(h,r)} 
        - \e^{-ihr}\tilde R_M^{(2)}(h,r).
    \end{split}
\end{equation}
\noindent\textbf{Extension of $q$ and $\tilde q$.}
The functions $q$ and $\tilde q$ are initially defined on $[0,1]$ and have to be extended smoothly to $[1,+\infty)$. The main terms and asymptotics at $h\to\infty$ of \eqref{I_0}, \eqref{I_pi} do not depend on the choice of this extension. But we will need exact estimates of remainders and their dependence of $r$, so we fix an explicit formula as follows.

Set
\begin{equation}\label{p(v)}
p(v) = 2(2v-v^2)^{(l-3)/2},
\end{equation}
then $\overline{q(u)} = u\, p(u^2) D_r(\arccos (1-u^2))$ on $[0,1]$. This formula however has sense on $[0,\sqrt2]$ and defines a smooth function on this interval. We choose a function $\psi_0\in C^\infty(\R)$ equal to 1 on $(-\infty,\sqrt{3/2}]$ and to 0 on $[\sqrt{7/4},+\infty)$, and we extend $q$ so as to have
\begin{equation}\label{q-extended}
\overline{q(u)} = \begin{cases} u\, p(u^2) D_r(\arccos (1-u^2)) \,\psi_0(u), & u\in [0,\sqrt2];\\
 0, & u\ge \sqrt2.
\end{cases}
\end{equation}
Similarly, we set
\begin{equation}\label{tilde-q-extended}
\tilde q(u) = \begin{cases} u\, p(u^2) D_r(\pi - \arccos (1-u^2)) \,\psi_0(u), & u\in [0,\sqrt2];\\
 0, & u\ge \sqrt2.
\end{cases}
\end{equation}
This makes $q$ and $\tilde q$ smooth on $[0,+\infty)$.

\smallskip
\noindent\textbf{The terms with $q_1$ and $\tilde q_1$.}
We claim that
\begin{equation}\label{q1-deriv}
(-1)^{m+1}\overline{q_1^{(m)}(1)} + \tilde q_1^{(m)}(1)=0
\end{equation}
for every $m$.
Recall that $q_1,\tilde q_1$ are defined on $[1,+\infty)$ as
$$
 q_1(v)=v^{-1/2 }q(v^{1/2}), \quad \tilde q_1(v) = v^{-1/2 }\tilde q(v^{1/2}).
$$
We can however consider them on $[0,+\infty)$ as given by the same formulas, even if the values on $[0,1]$ do not enter in \eqref{I_0} or \eqref{I_pi}.

Denote $s(x) = D_r(\arccos x)$, then according to \eqref{q-extended} and \eqref{tilde-q-extended} we have for $v\in [0,3/2]$
\begin{align*}
\overline{q_1(v)} &= p(v) s(1-v),\\
\tilde q_1(v) &= p(v) s(v-1),
\end{align*}
since $\pi-\arccos x = \arccos(-x)$ for $x\in[0,1]$.
It is easy to check that $p(2-v)=p(v)$, so we have
\begin{equation}\label{q1-tilde-q1}
\overline{q_1(2-v)} = \tilde q_1(v)
\end{equation}
for $v\in[1/2,3/2]$.
It follows immediately that $\tilde q_1^{(m)} (v) = (-1)^m \overline{q_1^{(m)} (2-v)}$, which proves the formula \eqref{q1-deriv}. 

Returning to the equality \eqref{xi-full}, we have shown that the sum involving $q_1$ and $\tilde q_1$ vanishes.
This observation implies at once that we can rewrite \eqref{xi-full} as
\begin{equation}\label{xi-final}
    \begin{split}
        r^{1-l}\xi(h,r)
        &= C_l\, \big[ \e^{-i\pi(1-l)/4-ihr} \varsigma(re_1) + \e^{i\pi(1-l)/4+ihr} \varsigma(-re_1) \big] (rh)^{(1-l)/2} 
        \\&\quad +R(h,r),
    \end{split}
\end{equation}
giving the main term of \eqref{I_Psi-in-the-theorem}. In
\begin{equation}\label{remainders-final}
R(h,r):=\e^{ihr}\overline{ R_{l-1}^{(1)}(h,r)}+\e^{-ihr} \tilde R_{l-1}^{(1)}(h,r) - \e^{ihr} \overline{ R_M^{(2)}(h,r)}  - \e^{-ihr}\tilde R_M^{(2)}(h,r),
\end{equation}
we can take any $M\ge1$. The remainders are given by the formulas \eqref{remainders-general-th} with $x=hr$, $p=2$ and $N=l-1$, where for $\tilde R^{(1)}_{l-1}$, $\tilde R^{(2)}_M$ the functions $q$ and $q_1$ have to be replaced by $\tilde q$ and $\tilde q_1$ respectively. We can thus rewrite them as follows: 
\begin{align}
\label{remainders}
        R(h,r) &= R^{(0)}(h,r) + R^{(1)}(h,r) + R^{(2)}(h,r),
        \\R^{(0)}(h,r) &= (-1)^{l} \big[ \e^{ihr} \overline{q^{(l-1)}(0) k_{l}(0)} + \e^{-ihr} \tilde q^{(l-1)}(0) k_{l}(0) \big]\notag
        \\&=  \Gamma( l/2 )/(2 (l-1)!)  \,  (hr)^{-l/2}
        \big[ \e^{ihr-i\pi l/4} \overline{q^{(l-1)}(0) } + \e^{-ihr+i\pi l/4} \tilde q^{(l-1)}(0) \big],\notag
        \\ R^{(1)}(h,r) &= (-1)^{l} \int_0^\infty
        \Big[ \e^{ihr} \overline{q^{(l)}(u) k_{l}(u)} + \e^{-ihr} \tilde q^{(l)}(u) k_{l}(u) \Big] du,\notag
        \\ R^{(2)}(h,r) &= \frac{-1}{2} \frac{i^M}{(hr)^M} \int_{1}^\infty 
        \Big[ (-1)^M e^{ ihr(1-v)} \overline{q_1^{(M)}(v)} + e^{ ihr(v-1)} \, \tilde q_1^{(M)}(v) \Big] dv.\notag
\end{align}

\end{proof}

\subsection{Estimates of derivatives in $u$}\label{sec-deriv}

Our next task is to obtain estimates of all functions involved in the remainders, and of remainders themselves, with a control of $r$.
We start with the following technical lemma.
\begin{lemma}\label{derivative of function with J}
    Let $f$ be a smooth function defined on $\R^l$ and $J=(J_{ij})_{1\le i, j\le l}$ a rotation matrix. Denote by $\mu(\l)=(\mu_1(\l), \cdots, \mu_l(\l)):= J\l $ for any $\l\in \R^l$. Then for any multi-index $m$,
    \[
    \partial^m (f\circ J)(\l) = \sum_k \partial^k f(\mu)j_{m, k}
    \]
    where the summation above exhausts the set 
    $$\set{k=(k_1, \cdots, k_l): k_i\ge 0, |k|=|m|}$$ and coefficients $| j_{m, k} | \le l^{|m|-1}$.
\end{lemma}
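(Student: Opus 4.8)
The statement is a multivariate chain rule (Faà di Bruno) for the composition of a smooth function with a \emph{linear} map, so the structure of the derivative terms is simple: no higher-order derivatives of the inner map appear, only products of its (constant) partial derivatives $\partial \mu_i/\partial \lambda_j = J_{ij}$. I would prove the identity and the coefficient bound simultaneously by induction on $|m|$.

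\textbf{Base case and the single-variable step.} For $|m|=1$, say $m = e_j$, the ordinary chain rule gives
\[
\partial_{\lambda_j}(f\circ J)(\lambda) = \sum_{i=1}^l (\partial_i f)(\mu)\, J_{ij},
\]
which is exactly the claimed form with $j_{e_j,\,e_i}=J_{ij}$, and $|J_{ij}|\le 1 = l^{|m|-1}=l^0$ since $J$ is a rotation matrix (its entries are bounded by $1$ in absolute value). For the inductive step, suppose the formula holds for some multi-index $m$, and apply $\partial_{\lambda_p}$ to both sides of $\partial^m(f\circ J)(\lambda) = \sum_k (\partial^k f)(\mu)\, j_{m,k}$. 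Since the $j_{m,k}$ are constants (they are polynomials in the fixed entries $J_{ij}$), differentiating only hits $(\partial^k f)(\mu)$, and by the chain rule again
\[
\partial_{\lambda_p}\big[(\partial^k f)(\mu)\big] = \sum_{i=1}^l (\partial^{k+e_i} f)(\mu)\, J_{ip}.
\]
Hence $\partial^{m+e_p}(f\circ J)(\lambda) = \sum_{k}\sum_{i=1}^l (\partial^{k+e_i}f)(\mu)\, j_{m,k} J_{ip}$, and after relabelling $k' = k + e_i$ (which ranges exactly over multi-indices of norm $|m|+1$, since $|k|=|m|$) and collecting terms, we get $\partial^{m+e_p}(f\circ J)(\lambda) = \sum_{k'} (\partial^{k'}f)(\mu)\, j_{m+e_p,\,k'}$ with
\[
j_{m+e_p,\,k'} = \sum_{i:\, k'_i\ge 1} j_{m,\,k'-e_i}\, J_{ip}.
\]

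\textbf{The coefficient bound.} It remains to track the constant. By the inductive hypothesis $|j_{m,k'-e_i}|\le l^{|m|-1}$, and $|J_{ip}|\le 1$; the sum over $i$ has at most $l$ nonzero terms, so $|j_{m+e_p,k'}| \le l \cdot l^{|m|-1} = l^{|m|} = l^{|m+e_p|-1}$, closing the induction. This is the only point requiring a small amount of care — one must make sure the relabelled index set is precisely $\{k' : k'_i\ge 0,\ |k'|=|m|+1\}$ and that the recursion for $j_{m+e_p,k'}$ genuinely involves at most $l$ summands so that the geometric growth $l^{|m|-1}$ is preserved rather than something worse like $(|m|\,l)^{|m|}$. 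There is no real obstacle here; the main thing to get right is bookkeeping of multi-indices, and the rotation-matrix hypothesis is used only through the crude bound $|J_{ij}|\le 1$ (orthogonality gives $\sum_i J_{ij}^2 = 1$, hence $|J_{ij}|\le 1$). I would present the induction compactly, stating the recursion for the coefficients and the bound, and leaving the elementary index relabelling to the reader.
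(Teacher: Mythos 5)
Your proof is correct and follows essentially the same route as the paper: induction on $|m|$ via the chain rule, with the coefficient recursion $j_{m+e_p,k'}=\sum_i j_{m,k'-e_i}J_{ip}$ and the bound $|J_{ij}|\le 1$ giving $l\cdot l^{|m|-1}=l^{|m|}$. No gaps; the minor difference in index placement ($J_{ip}$ versus the paper's $J_{pi}$) is immaterial since only $|J_{ij}|\le 1$ is used.
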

\begin{proof}
    We shall induct on $|m|$. If $|m|=1$, for any $1\le i\le l$, it is obvious that
    \[
    \frac{\partial (f\circ J)}{\partial \l_i }(\l)= \sum_{j=1}^l  \frac{\partial\, f }{\partial \mu_j }(\mu) \,\frac{\partial \mu_j}{\l_i}(\l) 
    = \sum_{j=1}^l  \frac{\partial\, f }{\partial \mu_j }(\mu) \,J_{ji}.
    \]
    Assuming now the proposition holds for $|m|$, we have for any $1\le i\le l$,
    \begin{align*}
    \frac{\partial }{\partial \l_i} \partial^m(f\circ J) (\l)
    &=\frac{\partial}{\partial \l_i} \sum_{k_1+\dots +k_l=|m| } \frac{\partial^{|m|} f}{\partial \mu_1^{k_1} \dots \partial \mu_l^{k_l }}(\mu) \, j_{m, k_1,\dots, k_l}
    \\& = \sum_{k_1+\dots +k_l=|m| } \sum_{j=1}^l \frac{\partial^{|m|+1} f}{\partial \mu_j\partial \mu_1^{k_1} \dots \partial \mu_l^{k_l }}(\mu)\, J_{ji} j_{m, k_1,\dots, k_l}
    \\&= \sum_{k_1+\dots +k_l=|m| } \sum_{j=1}^l \frac{\partial^{|m|+1} f }{\partial \mu_1^{k_1} \dots  \partial \mu_j^{k_j+1} \dots \partial \mu_l^{k_l }}(\mu)\, J_{ji} j_{m, k_1,\dots, k_l}
    \\&= \sum_{k_1+\dots +k_l=|m|+1 }\frac{\partial^{|m|+1}f }{\partial \mu_1^{k_1} \dots \partial \mu_l^{k_l }}(\mu)\, \sum_{j=1}^l J_{ji} j_{m, k_1,\dots, k_j-1,\dots, k_l}
    \end{align*}
    where coefficients with strictly negative indices are assumed to be zero, this is, $j_{m, k_1,\dots, k_j-1,\dots, k_l}=0$ if $k_j-1< 0$.
    With the direct calculation
    \[
    \abso{\sum_{j=1}^l J_{ji} j_{m, k_1,\dots, k_j-1,\dots, k_l}}
    \le \sum_{j=1}^l \abso{J_{ji}}  \abso{j_{m, k_1,\dots, k_j-1,\dots, k_l}}
    \le \sum_{j=1}^l 1\cdot l^{|m|-1} =l^{|m|}
    \]
    we reach the estimate for the coefficient described in the lemma.
\end{proof}

\begin{remark}\label{assumption of varsigma}
    This implies, in particular, that $\varsigma$ satisfies the same Assumptions \ref{assumption for sigma} as $\sigma$. More precisely, (1) For any multi-index $m$, $\abso{\partial^{m} \varsigma(\l)} \le C_m (1+|\l|)^{n-l}$.
    (2) For any multi-index $m$ with $|m|<a$, $\partial^m\varsigma(0)=0$.
\end{remark}

The analysis of the remainder primarily relies on a detailed study of the functions $q$, $\tilde q$, $q_1$ and $\tilde q_1$, which in our setting are composed of the function $ D_r$ and, more fundamentally, of the function $ \sigma $. We proceed by estimating these functions individually, one by one.

\noindent\textbf{Derivatives of $D_r$.}
In the definition \eqref{Dr-def} of $D_r$ as an integral, only the $\varsigma$ term depends on $\theta_1$, and it is a smooth function, so that we can differentiate by $\theta_1$ under the integral sign:
\begin{equation}\label{derivative of Dr wrt theta}
\frac {d^m} { d\theta_1^m} D_r(\theta_1)= \int_0^\pi\! \cdots \!\int_0^\pi \int_0^{2\pi} {\cal S}\, \frac {d^m} {d\theta_1^m} \varsigma(r\Theta) d\theta_{l-1}d \theta_{l-2} \cdots d \theta_2  
\end{equation}
where for brevity we denote ${\cal S} = \sin \theta_{l-2}\cdots \bracket{\sin \theta_2}^{l-3} $.
The first derivative is
\begin{equation}\label{dsigma-by-theta1}
\frac{d}{d\theta_1}\varsigma(r\Theta)
 = \sum_{i=1}^l \frac{\partial \varsigma}{\partial \l_i}(r\Theta) \frac{d \l_i}{d \theta_1}
 = - \frac{\partial \varsigma}{\partial \l_1}(r\Theta)\, r \sin \theta_1 + \frac{\partial \varsigma}{\partial \l_2}(r\Theta) \,
 r \cos\theta_1 \cos\theta_2 + \dots
\end{equation}
By induction, every further derivative $\dfrac{d^m}{d\theta_1^m}\varsigma(r\Theta)$ is a sum of terms of the type
\begin{align}\label{derivative of varsigma wrt theta}
    \partial^k \varsigma(r\Theta)\, r^{|k|} P_k (\theta_1,\dots, \theta_{l-1})
\end{align}
where $1\le |k|\le m$, and $P_k$ is a trigonometric polynomial not involving $r$.

Based on the observation in \Cref{assumption of varsigma}, we conclude that there is a constant $C_{m,l}$  such that
$$
\Big| \frac {d^m} {d\theta_1^m} \varsigma(r\Theta) \Big| \le C_{l,m} (1+r)^{n-l} \max_{1\le k\le m} r^k
 \le C_{l,m} \, r (1+r)^{n-l+m-1},
$$
and respectively
\begin{align}\label{derivative of D_r}
    \Big| \frac {d^m} { d\theta_1^m} D_r(\theta_1) \Big| \le C_{l, m} \, r (1+r)^{ n-l + m-1}.
\end{align}
Moreover, we have $\dfrac{d^m}{d\theta_1^m}\varsigma(0)=0$ for $m<a$. 

On the other side, every derivative $\dfrac{d^m}{dr^m}\varsigma(r\Theta)$ is a sum of terms of the type
\begin{align}\label{derivative of varsigma wrt r}
    \partial^\mu \varsigma(r\Theta)\, P_\mu (\theta_1,\dots, \theta_{l-1}),
\end{align}
where $1\le |\mu|\le m$, and $P_\mu$ is a trigonometric polynomial not involving $r$. Contrary to \eqref{derivative of varsigma wrt theta}, factors $r^k$ do not appear. It follows from \Cref{assumption of varsigma}(2) that $\left.\dfrac{d^m}{dr^m}\varsigma(r\Theta)\right|_{r=0}=0$ for any $m< a$. 
Suppose now that $r\le 1$. For any $k$ with $|k|< a$ we have, by Taylor's formula of the function $f(r):=\partial^k \varsigma(r\Theta)$ of order $a-|k|-1$ at the point $0$,
\begin{align*}
    |\partial^k \varsigma(r\Theta)| 
    &=\abso{\int_0^r \frac1{(a-|k|-1)!} f^{(a-|k|)}(x) (r-x)^{a-|k|-1} dx}\\ 
    &= r^{a-|k|}\frac1{(a-|k|-1)!} \abso{ \int_0^1 f^{(a-|k|)}(vr) (1-v)^{a-|k|-1} dv }
    \\&\le C_k r^{a-|k|} \int_0^1 \abso{1+vr}^{n-l} (1-v)^{a-|k|-1} dv 
    \\&\le C_{k,l}\, r^{a-|k|},
\end{align*}
and by \eqref{derivative of varsigma wrt theta},
\begin{align}\label{derivative of D_r small r}
    \Big| \frac {d^m} { d\theta_1^m} D_r(\theta_1) \Big| \le C_{l, m} \, r^{ a}, \quad r\le 1.
\end{align}
Note that in both \eqref{derivative of D_r} and \eqref{derivative of D_r small r}, the constants are multiples of $C_{m,\sigma}$ (of \Cref{assumption for sigma}), but we will not keep this index in the sequel.

\noindent\textbf{Derivatives of $q$ and $\tilde q$.}
Denote
$$
s_1(u)=2u^{l-2}(2-u^2)^{(l-3)/2} \psi_0(u),\quad s_2(u)=\overline{D_r(\arccos (1-u^2))},
$$
then by \eqref{q-extended} $q(u)=s_1(u) s_2(u)$ on $[0,\sqrt{7/4}]$ (and 0 otherwise). The function $\psi_0$ is smooth and equal to 1 on $[0,\sqrt{3/2}]$.

For $0\le k\le l$,
\begin{equation}\label{q-k-th-derivative}
q^{(k)}(u) = \sum_{k_1+k_2=k} \frac{k!}{k_1!k_2!} s_1^{(k_1)}(u) s_2^{(k_2)}(u).
\end{equation}
The function $s_1$ is smooth on $[0,\sqrt{7/4}]$, so that 
$$
\max_{ 0\le k_1 \le l} \sup_{u\in [0,\sqrt{7/4}]}\abso{s_1^{(k_1)}(u)} \le C'_l,
$$
with some constant $C'_l$.

To estimate derivatives of $s_2$, for $0\le k\le l$, the $k$-th derivative of $s_2$ by Faa di Bruno's formula \cite[Theorem 2.1]{ConsSa} is a finite sum of terms of the type
\begin{equation}\label{Dr-k-th-derivative}
C_{m_1,\dots, m_k} D_r^{(m_1+\dots +m_k)}\big( \theta_1(u) \big) \prod_{j=1}^k \big[ \theta_1^{(j)}(u) \big]^{m_j},
\end{equation}
with $m_j\ge0$ such that $m_1+\dots+m_k\le k$.

For $u\in[0,\sqrt2)$, $\theta_1'(u) = 2/\sqrt{2-u^2}$, so this and further derivatives of $\theta_1$ are bounded on $[0,\sqrt{7/4}]$, and we conclude that there is a constant $C''_l$ such that
\begin{equation}\label{s2u}
| s_2^{(k)}(u) | \le C''_l \max_{ 0\le m\le k} | D_r^{(m)}\big( \theta_1(u) \big) |
\end{equation}
for all $u\in \supp q$.

Now \eqref{s2u} simplifies to
$$
| s_2^{(k)}(u) | \le C_l (1+r)^{ n-l + k}.
$$
With yet another application of the Leibnitz rule to take into account derivatives of $\psi_0$, we get the final estimate
\begin{equation}\label{q-k-th-derivative-final}
\abso{q^{(k)}(u)}\le C_l (1+r)^{ n-l + k}.
\end{equation}
From \eqref{derivative of D_r small r}, we get similarly for $k\ge1$
\begin{align}\label{derivative of q small r}
    |q^{(k)}(u)| \le C_{l, k} \, r^{ a}, \quad r\le 1.
\end{align}

For the function $\tilde q$, we have similar estimates with
$$
\tilde s_1(u)=2u^{l-2}(2-u^2)^{(l-3)/2},\quad \tilde s_2(u)=D_r(\pi - \arccos (1-u^2)).
$$
The estimates are the same: for $k\ge1$,
\begin{equation}\label{q-tilde-k-th-derivative-final}
\abso{\tilde q^{(k)}(u)}\le C_l (1+r)^{ n-l + k},
\end{equation}
and $|\tilde q^{(k)}(u)| \le C_{l, k} \, r^{ a}$ when $r\le 1$.

\smallskip
\noindent\textbf{Derivatives of $q_1$ and $\tilde q_1$.}
Denote by $\vartheta(v)=\arccos (1-v)$, with \eqref{derivative of D_r}, much as the estimate of $q^{(k)}$, we have 
\begin{align}\label{q1-k-th-derivative-final}
    \abso{q_1^{(k)}(v)}= \abso{\frac{d^k}{d v^k} \Bracket{p(v) D_r(\vartheta(v))}} \le C_{l,k}\, r \,(1+r)^{n-l+k-1},
\end{align}
as well as $|\tilde q_1^{(k)}(v)|\le C_{l,k}\, r \,(1+r)^{ n-l + k-1}$. Moreover,
\begin{equation}\label{q1-k-th-derivative-small-r}
\max\big( |q_1^{(k)}(v)|, |\tilde q_1^{(k)}(v)| \big) \le C_{l, k} \, r^{ a}, \quad r\le1, \ k\ge1.
\end{equation}
As in \eqref{derivative of D_r} and \eqref{derivative of D_r small r}, all these constants are multiples of $C_{k,\sigma}$ of \Cref{assumption for sigma}.

We can now obtain basic estimates of the remainders, though they will be improved later.
\begin{theorem}\label{rem-basic-estimate}
For every $h\ge1$, the remainder $R(h,r)$ in \eqref{remainders} is bounded by
    $$
    \abso{R(h,r)} \le C_l \, (hr)^{-l/2} (1+r)^{n}
    $$
with the choice $M=\lfloor (l+1)/2 \rfloor$.
\end{theorem}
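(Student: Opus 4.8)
The strategy is to control each of the four remainder pieces in \eqref{remainders} separately, inserting the bounds \eqref{kn(u)} for the auxiliary functions $k_n$ together with the $u$-derivative and $v$-derivative estimates for $q,\tilde q,q_1,\tilde q_1$ established above. Since all four pieces carry a factor $(hr)^{-\text{(power)}}$ and a polynomial in $(1+r)$, the point is to check that the powers line up to give exactly $(hr)^{-l/2}(1+r)^n$ when $M=\lfloor(l+1)/2\rfloor$.

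\textbf{Step 1: the term $R^{(0)}$.} From \eqref{remainders} it equals a constant times $(hr)^{-l/2}$ multiplied by $[\e^{ihr-i\pi l/4}\overline{q^{(l-1)}(0)}+\e^{-ihr+i\pi l/4}\tilde q^{(l-1)}(0)]$. Apply \eqref{q-k-th-derivative-final} and \eqref{q-tilde-k-th-derivative-final} with $k=l-1$: $|q^{(l-1)}(0)|,|\tilde q^{(l-1)}(0)|\le C_l(1+r)^{n-l+(l-1)}=C_l(1+r)^{n-1}$. Hence $|R^{(0)}(h,r)|\le C_l(hr)^{-l/2}(1+r)^{n-1}\le C_l(hr)^{-l/2}(1+r)^n$.

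\textbf{Step 2: the term $R^{(1)}$.} This is $(-1)^l\int_0^\infty[\e^{ihr}\overline{q^{(l)}(u)k_l(u)}+\e^{-ihr}\tilde q^{(l)}(u)k_l(u)]\,du$, where $k_l$ is built from $x=hr$, $p=2$. By \eqref{kn(u)} with $n=l$, $p=2$, $|k_l(u)|\le C_l(hr)^{-l/2}$ uniformly in $u$. By \eqref{q-k-th-derivative-final} and \eqref{q-tilde-k-th-derivative-final} with $k=l$, $|q^{(l)}(u)|,|\tilde q^{(l)}(u)|\le C_l(1+r)^n$; moreover both are supported in $u\in[0,\sqrt{7/4}]$, a fixed finite interval, so the $u$-integral contributes only a bounded factor. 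Therefore $|R^{(1)}(h,r)|\le C_l(hr)^{-l/2}(1+r)^n$.

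\textbf{Step 3: the term $R^{(2)}$ and conclusion.} Here $R^{(2)}(h,r)=-\tfrac12 i^M(hr)^{-M}\int_1^\infty[(-1)^M\e^{ihr(1-v)}\overline{q_1^{(M)}(v)}+\e^{ihr(v-1)}\tilde q_1^{(M)}(v)]\,dv$. Recall $q_1,\tilde q_1$ are supported in $v\in[1,7/4]$ (since $q,\tilde q$ vanish for $u\ge\sqrt2$), so again the integration range is a fixed finite interval; by \eqref{q1-k-th-derivative-final} with $k=M$, the integrand is bounded by $C_{l,M}r(1+r)^{n-l+M-1}\le C_{l,M}(1+r)^{n-l+M}$. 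Thus $|R^{(2)}(h,r)|\le C_{l,M}(hr)^{-M}(1+r)^{n-l+M}$. With $M=\lfloor(l+1)/2\rfloor$ we have $M\ge l/2$, so for $h\ge1$ and all $r>0$ one has $(hr)^{-M}\le (hr)^{-l/2}$ provided $hr\ge1$; when $hr<1$ one uses instead $(hr)^{-M}(1+r)^{n-l+M}$ versus $(hr)^{-l/2}(1+r)^n$ — here note $n-l+M\le n$ iff $M\le l$, which holds, and a short case split on whether $r\le1$ (use \eqref{q1-k-th-derivative-small-r} to replace $r(1+r)^{n-l+M-1}$ by $r^a$, killing any negative power of $r$ against $(hr)^{-M}$ when combined with $h\ge1$) or $r\ge1$ (where $(1+r)^{n-l+M}\le (1+r)^n$ and one absorbs $(hr)^{-M}$) finishes the bound. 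Summing the three pieces gives $|R(h,r)|\le C_l(hr)^{-l/2}(1+r)^n$.

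\textbf{Main obstacle.} The only genuinely delicate point is the bookkeeping in Step 3: matching the power $(hr)^{-M}$ coming from $R^{(2)}$ against the target $(hr)^{-l/2}$ when $l$ is odd (so $M=(l+1)/2>l/2$) and $hr$ is small, and simultaneously keeping the $(1+r)$-exponent at most $n$. This is handled by the small-$r$ estimates \eqref{derivative of q small r}, \eqref{q1-k-th-derivative-small-r}, which provide the extra powers of $r$ needed to offset a negative power of $r$ hidden in $(hr)^{-M}$, together with the crude bound $h\ge1$; for $r\ge1$ the factor $(1+r)^{n-l+M}\le(1+r)^n$ suffices directly. Everything else is a uniform bound on a compactly supported integrand times an explicit power of $hr$.
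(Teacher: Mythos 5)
Your Steps 1 and 2 coincide with the paper's own argument (termwise bounds via \eqref{kn(u)}, \eqref{q-k-th-derivative-final}, \eqref{q-tilde-k-th-derivative-final}, compact supports) and are fine. The problem is in Step 3. By weakening \eqref{q1-k-th-derivative-final} from $r(1+r)^{n-l+M-1}$ to $(1+r)^{n-l+M}$ you discard exactly the factor the proof needs when $l$ is odd. Your patch for the regime $hr<1$ invokes \eqref{q1-k-th-derivative-small-r}, i.e.\ $|q_1^{(M)}(v)|\le C\,r^{a}$, and needs $r^{a}$ to absorb the leftover $(hr)^{l/2-M}=h^{-1/2}r^{-1/2}$; this works only if $a\ge 1$. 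But Assumptions \ref{assumption for sigma} only require the integer $a\ge 0$, and Theorem \ref{rem-basic-estimate} is stated for every such $\sigma$: when $a=0$ (the vanishing condition is then vacuous) your route gives $|R^{(2)}(h,r)|\lesssim (hr)^{-(l+1)/2}$ for $l$ odd and $hr<1$, which is not dominated by $(hr)^{-l/2}(1+r)^{n}$. (Also, your sub-case ``$hr<1$ and $r\ge 1$'' is empty since $h\ge1$, so that clause does no work.) As written, the argument therefore proves the theorem only under the extra hypothesis $a\ge1$ --- sufficient for the paper's applications, where $a=2d$ or $d+d_F$, but not for the statement as given.

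The repair is the paper's one-line observation, which also removes the need for any case split: keep the factor $r$, so that $|R^{(2)}(h,r)|\le C_l\,(hr)^{-M}\,r\,(1+r)^{n-l+M-1}$ as in \eqref{R2-any-M}. For $l$ even take $M=l/2$ and conclude directly; for $l$ odd, $M=(l+1)/2$, write $(hr)^{-(l+1)/2}\,r=(hr)^{-l/2}\,h^{-1/2}\,r^{1/2}\le (hr)^{-l/2}(1+r)^{1/2}$ using only $h\ge1$, valid for all $r>0$ and all $a\ge0$, which yields $|R^{(2)}(h,r)|\le C_l\,(hr)^{-l/2}(1+r)^{n-l/2}\le C_l\,(hr)^{-l/2}(1+r)^{n}$ and completes the proof exactly as in the paper.
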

\begin{proof}
By \eqref{remainders}, \eqref{q-k-th-derivative-final} and \eqref{q-tilde-k-th-derivative-final}, knowing that $q$ and $\tilde q$ are supported in $[0,\sqrt2]$, we obtain:
\begin{align*}
    | R^{(0)}(h,r)| & \le C_l (hr)^{-l/2} \big[ |q^{(l-1)}(0)| + |\tilde q^{(l-1)}(0)| \big]
    \le C_l (hr)^{-l/2} (1+r)^{ n-1},
    \\ |R^{(1)}(h,r)| &\le  \int_0^{\sqrt 2} \Big[ | q^{(l)}(u) k_{l}(u)|  +  |\tilde q^{(l)}(u) k_{l}(u)| \Big] du 
    \le C_l (hr)^{ - l/2 } (1+r)^{ n},
\end{align*}
using \eqref{kn(u)} in the final inequality.

Next, \eqref{q1-k-th-derivative-final} implies that
\begin{align}\label{R2-any-M}
\begin{split}
	| R^{(2)}(h,r) | & \le \frac{1}{(hr)^M} \int_{1}^{2} \Big[ |q_1^{(M)}(v)| + |\tilde q_1^{(M)}(v) | \Big] dv
\\&\le C_l (hr)^{-M} r (1+r)^{n-l+M-1}.
\end{split}
\end{align}
If $l$ is odd, we set $M=(l+1)/2$ and get, assuming $h\ge1$,
$$
| R^{(2)}(h,r) | \le C_l (hr)^{-(l+1)/2} r (1+r)^{n-l/2-1/2}
\le C_l (hr)^{-l/2} (1+r)^{n-l/2}.
$$
If $l$ is even, we get the same estimate by choosing $M=l/2$.
\end{proof}

\begin{corollary}\label{assump-on-psi-basic}
 Let $\Psi:[0,+\infty)\to\C$ be a measurable function on $[0,\infty)$. For all integrals in the statement of \Cref{main theorem} to converge, it is sufficient that $M\le l$ and
 \[
   \int_0^\infty |\Psi(r^2)| (1+r)^{n+l/2-2} dr <\infty.
 \]
\end{corollary}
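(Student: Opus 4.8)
The plan is to go through, one at a time, the integrals that appear in the statement of \Cref{main theorem}: the defining integral \eqref{I_Psi-in-the-theorem-initial-form} of $I_\Psi(h)$; the two $r$-integrals on the right of \eqref{I_Psi-in-the-theorem}, namely the ``main'' one with the factor $r^{(l-1)/2}$ and the ``remainder'' one with $r^{l-1}R(h,r)$; and the $u$- and $v$-integrals defining $R(h,r)$ through \eqref{remainders}. The last group is harmless: by construction $q,\tilde q$ are supported in $[0,\sqrt2]$ and $q_1,\tilde q_1$ in $[1,2]$, so the $u$-integral in $R^{(1)}$ and the $v$-integral in $R^{(2)}$ are integrals of continuous functions over compact sets, hence finite; likewise $\xi(h,r)$ in \eqref{xi(r,h)} is a continuous function integrated over $[0,\pi]$. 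Everything therefore reduces to the absolute convergence of the two $r$-integrals in \eqref{I_Psi-in-the-theorem} for a fixed $h>0$ — all constants below may depend on $h$, which is irrelevant for convergence — after which the convergence of $I_\Psi(h)$ will follow for free from the \emph{exact} pointwise identity \eqref{xi-final}.

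For the main-term integral I would just use assumption (1) of \Cref{assumption for sigma}, $|\sigma(\pm rE)|\le C(1+r)^{n-l}$, so that its integrand is at most $2C\,|\Psi(r)|\,(1+r)^{n-l}r^{(l-1)/2}\le 2C\,|\Psi(r)|\,(1+r)^{n-(l+1)/2}$; since $l\ge2$ the exponent $n-(l+1)/2$ is $\le n+l/2-2$, so this is $\lesssim|\Psi(r)|(1+r)^{n+l/2-2}$, which is integrable by the hypothesis.

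The substantial step is the remainder integral. I would split $R=R^{(0)}+R^{(1)}+R^{(2)}$ following \eqref{remainders}, with the free parameter taken in the range $1\le M\le l$. For $R^{(0)}$ and $R^{(1)}$ I would insert the derivative estimates \eqref{q-k-th-derivative-final}--\eqref{q-tilde-k-th-derivative-final} for $q^{(l-1)},q^{(l)},\tilde q^{(l-1)},\tilde q^{(l)}$ on their compact supports into \eqref{remainders} and bound the kernels $k_l$ by \eqref{kn(u)} with $x=hr$, $p=2$; after multiplying by $r^{l-1}$, a careful bookkeeping of the powers of $r$ should show these contributions are integrable near $r=0$ (this uses $l\ge2$) and bounded for $r\ge1$ by $C_h|\Psi(r)|(1+r)^{n+l/2-2}$. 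For $R^{(2)}$ I would use instead \eqref{q1-k-th-derivative-final} (equivalently the bound \eqref{R2-any-M}), which gives $r^{l-1}|R^{(2)}(h,r)|\lesssim_h r^{\,l-M}(1+r)^{\,n-l+M-1}$; for $r\ge1$ this is at most $(1+r)^{n-1}\le(1+r)^{n+l/2-2}$, while for $r\le1$ the factor $r^{\,l-M}$ is integrable at the origin precisely because $M\le l$. Summing the three pieces, $r^{l-1}|R(h,r)|$ is bounded on $(0,1]$ and dominated by $C_h(1+r)^{n+l/2-2}$ on $[1,\infty)$, so the remainder integral is finite under the hypothesis.

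Finally, \eqref{xi-final} is an \emph{exact} pointwise identity for every $r>0$ (the stationary-phase remainders in \eqref{remainders} being exact, not merely asymptotic), so multiplying it by $r^{l-1}$ exhibits $\xi(h,r)$ as the sum of the two integrands that occur in \eqref{I_Psi-in-the-theorem}; hence $I_\Psi(h)=\int_0^\infty\Psi(r)\xi(h,r)\,dr$ splits into the two absolutely convergent $r$-integrals just treated, which establishes both the convergence of $I_\Psi(h)$ and the identity \eqref{I_Psi-in-the-theorem}. I expect the remainder estimate of the third paragraph to be the main obstacle: one must track precisely the power of $r$ produced by each of $R^{(0)},R^{(1)},R^{(2)}$ — it is the decay furnished by \eqref{kn(u)} that lowers the exponent at infinity to $n+l/2-2$ — and recognise that the restriction $M\le l$ is forced not by the behaviour as $r\to\infty$ but by integrability of the $R^{(2)}$-contribution at $r=0$.
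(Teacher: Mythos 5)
Your argument is essentially the paper's own: the main term is bounded via Assumption \ref{assumption for sigma}(1), the contributions of $R^{(0)}$ and $R^{(1)}$ via the compact supports of $q,\tilde q$, the bounds \eqref{q-k-th-derivative-final}--\eqref{q-tilde-k-th-derivative-final} and \eqref{kn(u)} (this is exactly Theorem \ref{rem-basic-estimate}, which the paper simply invokes), and $R^{(2)}$ via \eqref{R2-any-M}, with $M\le l$ forced by the behaviour at $r=0$ — the same power bookkeeping, yielding the exponent $n+l/2-2$. The one loose end is your claim that convergence of $I_\Psi(h)$ itself comes ``for free'' from \eqref{xi-final}: absolute convergence of $\int_0^\infty\Psi(r)\xi(h,r)\,dr$ does not by itself give absolute (Lebesgue) convergence of the integral \eqref{I_Psi-in-the-theorem-initial-form} over $\mfa^*$, nor does it license the passage to polar coordinates, since cancellation inside $\xi$ could make $|\xi(h,r)|$ much smaller than $r^{l-1}\int_{S^{l-1}}|\sigma(r\Theta)|\,d\Theta$; one should instead bound $\int_{\mfa^*}|\Psi(|\l|)\sigma(\l)|\,d\l\lesssim\int_0^\infty|\Psi(r)|(1+r)^{n-1}\,dr<\infty$ directly from Assumption \ref{assumption for sigma}(1), as the paper does in one line, after which Tonelli justifies the polar-coordinate form. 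This is a trivial repair (and, like your main-term comparison $n-(l+1)/2\le n+l/2-2$, it tacitly uses $l\ge2$, an assumption the paper shares), so the proposal is in substance correct and follows the paper's route.
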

\begin{proof}
The integral \eqref{I_Psi-in-the-theorem-initial-form} itself is bounded by
\begin{equation}\label{Psi-int-rm}
\int_0^\infty |\Psi(r^2)| (1+r)^m dr
\end{equation}
with $m=n-l$, by assumptions on $\sigma$. In the main term of \eqref{I_Psi-in-the-theorem}, the integrals are bounded by similar expressions with $m=n-l+(l-1)/2 = n-l/2-1/2$. \Cref{rem-basic-estimate} implies that the integrals with $ R^{(0)}(h,r)$ and $ R^{(1)}(h,r)$ involve $m = (l-1) + (n -1) -l/2 = n+l/2-2$.

A bit more attention is needed for $R^{(2)}(h,r)$. For $r\ge1$, by \eqref{R2-any-M},
$$
| R^{(2)}(h,r) | \lesssim (1+r)^{-M+1 + n-l+M-1} = (1+r)^{n-l},
$$
so that $\int_1^\infty |\Psi(r^2) R^{(2)}(h,r) | r^{l-1} dr$ is bounded by \eqref{Psi-int-rm} with $m=n-1$.
But for $r\le 1$ we have only $| R^{(2)}(h,r) | r^{l-1} \le r^{l-M}$ (this will be improved in \eqref{R2-derivatives-r}), and in general, we should have $M\le l$. Under this assumption the integral with $\Psi$ over $r\in[0,1]$ is bounded by \eqref{Psi-int-rm} with any positive~$m$.

Finally, $m=n+l/2-2$ is the largest power needed.
\end{proof}

\subsection{Estimates of derivatives in $r$}\label{subsec-deriv-in-r}

We start now preparations for the analysis of the integrals with respect to $r$ appearing in \eqref{I_Psi-in-the-theorem}. This means that we need to estimate derivatives of $r^{l-1} R(h,r)$ rather than of $R(h,r)$ alone.

\smallskip
\noindent\textbf{Derivatives in $r$ of $D_r$.}
By (1) of \Cref{assumption of varsigma} and \eqref{derivative of varsigma wrt r}, we have now
\begin{equation}\label{Dr-derivative-in-r}
\Big| \frac {d^m} {dr^m} D_r (\theta_1) \Big| \le C_{l,m} (1+r)^{n-l}.
\end{equation}

We need next derivatives in $r$ of the derivatives $D_r^{(k)}$ in $\theta_1$. Order of derivation having no influence on the result, we can differentiate $k$ times \eqref{derivative of varsigma wrt r} with respect to $\theta_1$.  The result is the sum of terms
\begin{align}\label{derivative of derivative of varsigma wrt theta1 wrt r}
    \partial^{i+j}\varsigma(r\Theta) r^{|i|} P_{\theta_1,i}(\theta_1,\dots, \theta_{l-1})P_{r,j}(\theta_1,\dots, \theta_{l-1}),
\end{align}
where $1\le |i|\le k$, $|j|=m$, and $P_{\theta_1,i}, P_{r,j}$ are the trigonometric polynomials not involving $r$. This observation gives
\begin{equation}\label{Dr-k-derivative-in-r}
\Big| \frac {d^m} {dr^m} D_r^{(k)} (\theta_1) \Big| \le C_{l,m} (1+r)^{n-l+k}.
\end{equation}
Moreover, an application of Leibnitz's rule yields
\begin{equation}\label{Dr-k-rs-derivative-in-r}
\Big| \frac {d^m} {dr^m} \big( r^s D_r^{(k)} (\theta_1) \big) \Big| \le C_{l,m} (1+r)^{n-l+k+s}
\end{equation}
for any integer $s\ge 0$.

Next, it follows from \Cref{assumption of varsigma} and \eqref{derivative of derivative of varsigma wrt theta1 wrt r} that
\begin{align}\label{derivative of D wrt r and theta value at r=0}
    \dfrac{d^m}{d r^m} D_r^{(k)}(\theta_1)\Big |_{r=0} =0
\end{align}
if $m<a$. Arguing as in the \eqref{derivative of D_r small r}, we consequently obtain
\begin{align}\label{derivative of D_r k rs small r}
        \Big| \frac{d^m}{d r^m} \big( r^{s} D_r^{(k)}(\theta_1) \big) \Big|
        & \le C_{m,s} \, r^{a+s-m}, \qquad r\le1,\ k\ge0.
\end{align}

\smallskip
\noindent\textbf{Derivatives in $r$ of $q$, $\tilde q$, $q_1$ and $\tilde q_1$.}
By \eqref{q-extended}, we have (on its support)
$\overline{q(u)} = u\, p(u^2) D_r(\arccos (1-u^2)) \,\psi_0(u)$, where only $D_r$ depends on $r$. Other functions are smooth and $q$ is compactly supported, thus the previous estimate implies (with possibly another constant)
\begin{equation}\label{q-derivative-in-r}
\Big| \frac {d^m} {dr^m} \big( r^{s} q(u) \big) \Big| \le C_{l,m} (1+r)^{n-l+s}
\end{equation}
for every $u\in [0,+\infty)$.
By \eqref{q-k-th-derivative} and \eqref{Dr-k-th-derivative}, the same reasoning applies to $q^{(k)}$ for any $k$, and we have
\begin{equation}\label{qkth-derivative-in-r}
\Big| \frac {d^m} {dr^m} \big( r^{s} q^{(k)}(u) \big) \Big| \le C_{l,m} (1+r)^{n-l+k+s}
\end{equation}
for $s\ge0$ and any $u$. Moreover,
\begin{equation}\label{qkth-derivative-in-r-small-r}
\Big| \frac{d^m}{d r^m} \big( r^{s} q^{(k)}(u) \big)\Big| \le C_{l,m} \, r^{s+a-m}, \qquad r\le1,
\end{equation}
for every $k\ge0$.
The same holds for the derivatives of $\tilde q$, $q_1$ and $\tilde q_1$. The constants are clearly multiples of $C_{k+m,\sigma}$ of \Cref{assumption for sigma}.

We can formulate this differently as well. By \eqref{derivative of D wrt r and theta value at r=0}, $\dfrac {d^m} {dr^m} q^{(k)}(u)\Big |_{r=0} =0$ for $0\le m\le a-1$. There exists therefore a function $\varkappa_k(u,r)$, smooth in $r$, such that $q^{(k)}(u) = r^a \varkappa_k(u,r)$ (this is best seen from the Taylor's formula with remainder in the integral form). The same holds for $\tilde q^{(k)}$, $q_1^{(k)}$ and $\tilde q_1^{(k)}$.

\smallskip
\noindent\textbf{Derivatives in $r$ of $k_l$.}
By \eqref{k_n} with $n=s$ and $x=hr$, we have for every $s\ge1$
\begin{align*}
\frac{ d^m} {dr^m} k_s(u) &= \frac{ (-1)^s} {(s-1)!} \int_u^{u + \infty e^{i\pi/4} } (z-u)^{s-1} (ihz^2)^m e^{ihr z^2} dz,
\end{align*}
with integration performed over the ray $\arg (z-u) = \pi/4$. Parameterizing $z = u+ \zeta\e^{ i\pi/4}$, we get
\begin{equation}\label{z^2}
z^2 
= u^2 + u\zeta (\sqrt2+i\sqrt2) + i \zeta^2
\end{equation}
and $\Re (ihr z^2) = - hr u\zeta \sqrt2 - hr\zeta^2$. It follows that
\begin{align*}
\Big|\frac{ d^m} {dr^m} k_s(u) \Big| 
&\le C_s \, h^m \int_0^{\infty } \zeta^{s-1} (u+\zeta)^{2m} e^{ - hr\zeta^2} d\zeta
\\& \le C_{s,m} \, h^m \max_{0\le k \le 2m} \int_0^{\infty } \zeta^{s-1+k} e^{ - hr\zeta^2} d\zeta
\\& = C_{s,m}' \, h^m \max_{0\le k \le 2m} (hr)^{-(s+k)/2}
\\& \le C_{s,m}' \, h^m \big[ (hr)^{-s/2} + (hr)^{-s/2 - m} \big]
\\&= C_{s,m}' (hr)^{-s/2} \big[ h^m + r^{-m} \big].
\end{align*}

\begin{theorem}\label{Derivatives_in_r_of_the_remainders}
    For every $h\ge1$ and any $m\ge 1$, the following inequalities
    \begin{equation}\label{R-derivatives-r} \Big| \frac {d^m} {dr^m} \big( r^{l-1} R(h,r) \big) \Big|
    \le C_{l,m} \, h^{m-l/2} (1+r)^{ n + l/2-1}, \quad r\ge 1,
    \end{equation}
    and 
    \begin{equation}\label{R-derivatives-r-small-r}
    \Big| \frac {d^m} {dr^m} \big( r^{l-1} R(h,r) \big) \Big|
    \le C_{l,m} \, h^{m-l/2} \, r^{ a-m + l - \lfloor l/2\rfloor - 2}\, , \quad r\le1,
    \end{equation}
hold with the choice $M=\lfloor l/2 \rfloor +1$.
\end{theorem}
\begin{proof}
We suppose below that $h\ge1$. From \eqref{remainders} and the estimates above we get:
\begin{align}\label{R0-derivatives-r}
&\Big| \frac {d^m } { dr^m} \big( r^{l-1} R^{(0)}(h,r) \big) \Big| 
        \\&= C_l \Big| \frac {d^m } { dr^m} \Big( \frac{ r^{l/2-1} } { h^{l/2} }
        \big[ \e^{ihr-i\pi l/4} \overline{q^{(l-1)}(0) } + \e^{-ihr+i\pi l/4} \tilde q^{(l-1)}(0) \big] \Big) \Big|\notag
       \\ &\le C_{l,m} h^{-l/2} \max_{0\le k\le m} h^{m-k} \notag
       \Big( \Big|\frac {d^k } { dr^k} \big( r^{l/2-1} q^{(l-1)}(0) \big) \Big|
        + \Big|\frac {d^k } { dr^k} \big( r^{l/2-1} \tilde q^{(l-1)}(0) \big) \Big| \Big)\notag
       \\ &\le C_{l,m} \, h^{-l/2} \max_{0\le k\le m} h^{m-k} (1+r)^{n - l +l-1 + l/2 - 1}\notag
       \\ &\le C_{l,m} \, h^{m-l/2} (1+r)^{n + l/2 -2}.\notag
\end{align}
and for $r\le1$, $m\ge1$
\begin{align}
    \begin{split}\label{R0-derivatives-r-small-r}
        \Big| \frac {d^m } { dr^m} \big( r^{l-1} R^{(0)}(h,r) \big) \Big| 
        &\le C_{l,m} h^{-l/2} \max_{0\le k\le m} h^{m-k} \, r^{(l/2-1)+a -k }
        \\& \le C_{l,m} h^{m-l/2} \, r^{a - m + l/2-1}.
    \end{split}
\end{align}
Next, for $r\ge1$
\begin{align}\label{R1-derivatives-r}
        &\Big| \frac {d^m } { dr^m} \big( r^{l-1}  R^{(1)}(h,r) \big) \Big|
        \\&= \Big|\frac {d^m } { dr^m} r^{l-1} \int_0^\infty
        \Big[ \e^{ihr} \overline{q^{(l)}(u) k_{l}(u)} + \e^{-ihr} \tilde q^{(l)}(u) k_{l}(u) \Big] du \Big|\notag
        \\ &\le C_{l,m} \int_0^\infty
        \max_{0\le k\le m} h^{m-k} \Big[
        \Big|\frac {d^k } { dr^k} \big[ r^{l-1} \overline{q^{(l)}(u) k_{l}(u)} \big]\Big|
        + \Big|\frac {d^k } { dr^k} \big[r^{l-1} \tilde q^{(l)}(u) k_{l}(u)\big]\Big| \Big] du\notag
        \\ &\le C_{l,m} \max_{0\le k\le m} h^{m-k} 
        (1+r)^{n-l+l+(l-1)} \max_{0\le j\le k} (hr)^{-l/2} \big[ h^j + r^{-j} \big]\notag
        \\ &\le C_{l,m} \max_{0\le k\le m} h^{m-k} 
        (1+r)^{n+l-1} (hr)^{-l/2} h^k\notag
        \\ &\le C_{l,m} h^{m-l/2} (1+r)^{n+l/2-1},\notag
\end{align}
while for $r\le1$
\begin{align}
    \begin{split}\label{R1-derivatives-r-small-r}
        &\Big| \frac {d^m } { dr^m} \big( r^{l-1} R^{(1)}(h,r) \big) \Big|
        \\&\le C_{l,m} \max_{0\le k\le m} h^{m-k} 
        \max_{0\le j\le k} r^{(l-1)+a-(k-j)} (hr)^{-l/2} \big[ h^j + r^{-j} \big]
       \\ &\le C_{l,m} r^{a+l/2-1}\, h^{m-l/2} \max_{0\le j\le k\le m} \big[ h^{j-k} r^{j-k} + r^{-k}h^{-k} \big]
       \\ &\le C_{l,m} r^{a+l/2-1-m}\, h^{m-l/2}.
    \end{split}
\end{align}
Finally, for any $r$
\begin{align}
    \begin{split}\label{R2-derivatives-r}
        &\Big| \frac {d^m } { dr^m} \big( r^{l-1} R^{(2)}(h,r)  \big) \Big|
        \\&\le h^{-M} \int_{1}^\infty 
        \Big| \frac {d^m } { dr^m} \big( r^{l-1-M} \Big[ e^{ihr(1-v)} \overline{q_1^{(M)}(v)} + e^{ihr(v-1)} \, \tilde q_1^{(M)}(v) \Big] \Big| dv
        \\&\le h^{-M} C_{l,m} \max_{0\le k\le m} h^{m-k} (1+r)^{n-l+M+(l-1-M) }
        \\&\le C_{l,m} h^{m-M} (1+r)^{n-1}
    \end{split}
\end{align}
and for $r\le1$
\begin{align}
    \begin{split}\label{R2-derivatives-r-small-r}
        \Big| \frac {d^m } { dr^m} \big( r^{l-1} R_M^{(2)}(h,r)  \big) \Big|
        &\le h^{-M} C_{l,m} \max_{0\le k\le m} h^{m-k} r^{(l-1-M)+a-k }
        \\&\le C_{l,m} h^{m-M} r^{l-1+a - M - m }.
    \end{split}
\end{align}
Altogether, choosing $M=\lfloor l/2\rfloor +1$, we have \eqref{R-derivatives-r} and for $r\le1$
\begin{equation}
\Big| \frac {d^m} {dr^m} \big( r^{l-1} R(h,r) \big) \Big|
\le C_{l,m} \, h^{m-l/2} \big(r^{ a-m + l/2-1 } + r^{ a+l-1-m-M} \big). 
\end{equation}
Note that $l-1-M$ is equal to $l/2-2$ or $l/2-3/2$ for $l$ even or odd, respectively; this term is therefore dominant at $r\to0$. Thus, for $m\ge1$, we reach \eqref{R-derivatives-r-small-r}.
\end{proof}

\section{Global estimates for oscillatory functions of the shifted Laplacian}\label{Global_estimates}

In this section we study the operator $\Delta_\rho = -\Delta -|\rho|^2$ and functions of it of the form $\Psi(\Delta_\rho)$ with $\Psi(r^2) = e^{itr} \psi(r)$.
By \eqref{integral formula of kernel}, the convolution kernel $k_t$ of $\Psi(\Delta_\rho)$ is
\begin{equation}\label{k_t-LB}
k_t(x)= C_G \int_{ \mfa^*} e^{it|\l|} \psi(|\lambda|)\varphi_\lambda(x) \abso{\HCc (\lambda)}^{-2}d\lambda, \quad x\in G.
\end{equation}
From \eqref{relation of kernels}, we get the corresponding result for the distinguished Laplacian $\cal L$.

\begin{theorem}\label{eit-psi-LB-no-gap}
Let $\psi$ be $p$ times continuously differentiable on $[0,+\infty)$, and $b\ge0$ an integer such that either $b=0$ or $b>0$ and $\psi^{(m)}(0)=0$ for $0\le m< b\le p-2$. Suppose moreover that
\begin{equation}\label{psi-r-integrals}
C_{\psi,k,s} = \int_0^\infty |\psi^{(k)}(r)| (r+1)^{s} dr < \infty
\end{equation}
for $0\le k\le p$ and $s\ge 0$ specified below.
Then we have:
\begin{enumerate}
\item Suppose that $p=\nu$ and \eqref{psi-r-integrals} holds for $s=n-1$. Then for every $x\in G$ and $t\ne0$,
$$
|k_t(x)| \lesssim |t|^{-\nu} (1+|x^+|)^{\nu}\, \phi_0(\exp x^+)
\lesssim |t|^{-\nu} ( 1+ |x^+| ) ^{ \nu+d} e^{-\rho( x^+) }.
$$
\item Suppose that $p=\lfloor l/2 \rfloor$ and \eqref{psi-r-integrals} holds for $s=7(n-l)+1$.\\
For $|t|$ large enough and $|x^+| \le 3|t|$,
$$
|k_t(x)| \lesssim\, |t|^{(1-l)/2} (\log |t|)^d e^{-\rho(x^+)}.
$$
\item Suppose that $p= 2d + \lfloor l/2\rfloor$ and \eqref{psi-r-integrals} holds with $s = \max(7(n-l)+1,n+2d+(l+\lfloor l/2 \rfloor-3)/2)$.
\\
Then for $|t|$ large enough and $|x^+| > 3|t|$,
\begin{align*}
| k_t(x) | &\lesssim |x^+|^{-d-l-b} 
e^{-\rho(x^+)}.
\end{align*}
\item
There is a constant $\delta$ depending on the group only such that for $|t|$ large enough and $x$ with $\log |t|<|x^+|\le 3|t| $ and $\big| |t|-|x^+| \big| > \delta \log |t|$
\begin{align*}
| k_t(x) | \lesssim \big( |x^+|^{-d-b-l} &+ |x^+|^{(1-l)/2} \big| |x^+| - |t| \big|^{-d-b-(l+1)/2} \big) 
   \\&\times(\log |t|)^{2(2d+b+l)+1} e^{-\rho(x^+)}.
\end{align*}
\end{enumerate}
In each case, the constant is $C_G \max\limits_{0\le k\le p} C_{\psi,k,s}$, with $C_G$ depending on the group.
\end{theorem}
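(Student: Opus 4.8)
The plan is to insert the kernel formula \eqref{k_t-LB} into the machinery of \Cref{Oscillating integrals with a radial factor}, after first replacing $\varphi_\lambda(x)$ by its leading exponential asymptotics. Write $x=e^{x^+}$ with $x^+\in\Ca$, pass to polar coordinates $\lambda=r\omega$ on $\mfa^*$, and set
\[
\Xi(r,x):=r^{l-1}\!\int_{S^{l-1}}\!\varphi_{r\omega}(e^{x^+})\,\abso{\HCc(r\omega)}^{-2}\,d\omega,\qquad\text{so that}\qquad k_t(x)=C_G\!\int_0^\infty\! e^{itr}\psi(r)\,\Xi(r,x)\,dr.
\]
By \Cref{estimate of phi_lambda}, \Cref{estimate of c function}, \eqref{estiofHCc} and the Gindikin--Karpelevich structure (\Cref{assumption for sigma}(2) with $a=2d$), the function $\Xi(\cdot,x)$ is smooth, vanishes to order $\nu-1=2d+l-1$ at $r=0$, and satisfies $\abso{\partial_r^j\Xi(r,x)}\lesssim(1+r)^{n-1}(1+\abso{x^+})^j\phi_0(e^{x^+})$ for all $r\ge0$. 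Statement (1) then follows by integrating by parts $\nu$ times against $e^{itr}$: the boundary terms at $r=\infty$ vanish by \eqref{psi-r-integrals}; all boundary terms at $r=0$ vanish except the one of order $\nu-1$, which is $\lesssim\abso t^{-\nu}\phi_0(e^{x^+})$; and the surviving integral $(it)^{-\nu}\int_0^\infty e^{itr}\partial_r^\nu[\psi\Xi]\,dr$ is bounded, via Leibniz, the estimate on $\Xi$ and \eqref{psi-r-integrals} with $s=n-1$, by $\lesssim\abso t^{-\nu}(1+\abso{x^+})^\nu\phi_0(e^{x^+})$. One closes with $\phi_0(e^{x^+})\le(1+\abso{x^+})^de^{-\rho(x^+)}$.

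For statements (2) and (3) one must extract genuine decay in $t$, for which $\abso{\varphi_\lambda}\le\phi_0$ is too crude; the idea is to replace $\varphi_\lambda(e^{x^+})$ by its leading term and run the stationary-phase analysis. If $\abso{x^+}$ is bounded, the $\lfloor l/2\rfloor$-fold integration by parts of statement (1) already suffices (then $\abso t^{-\lfloor l/2\rfloor}\le\abso t^{(1-l)/2}$ and $e^{-\rho(x^+)}\asymp1$). If $\abso{x^+}$ is large, decompose $\Ca$ according to the set $F=F(x^+):=\{\alpha\in\Sigma_s^+:\alpha(x^+)<1\}$ of ``small'' simple roots---necessarily a proper subset of $\Sigma_s^+$ since $\abso{x^+}$ is large---and, within each stratum, dyadically in $\tau_F(x^+)$. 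Every root outside $\Sigma_F^+$ is $\ge1$ on $x^+$, so $\tau_F(x^+)\ge1$ and \Cref{thm593a} gives
\[
\varphi_\lambda(e^{x^+})=e^{(\rho_F-\rho)(x^+)}\psi_\lambda(e^{x^+})+e^{-\rho(x^+)}\,O\!\big((1+\abso\lambda)^{6(n-l)+1}(1+\abso{x^+})^{6(n-l)+1}e^{-2\tau_F(x^+)}\big).
\]
Substituted into \eqref{k_t-LB} and combined with \eqref{estiofHCc}, the error term contributes---after the $r$-integral and the geometrically convergent summation over the dyadic scales (using $e^{-2\tau_F(x^+)}$) and the finitely many $F$---at most $e^{-\rho(x^+)}\max_kC_{\psi,k,s}$, which the large-$t$ hypothesis renders negligible against the asserted bounds; this is what pins down $s=7(n-l)+1$. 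For the leading term, substitute the definition of $\psi_\lambda$ (\Cref{definition of theta}), collapse the Weyl sum by the change of variables $\lambda\mapsto s^{-1}\lambda$ (using $\varphi_\lambda=\varphi_{s\lambda}$ and $\abso{\HCc(s\lambda)}=\abso{\HCc(\lambda)}$), and use the standard factorization $\theta_\lambda(e^{x^+})=e^{i\lambda(x_F^+)}\theta_\lambda^{ss}(e^{y})$, where $x^+=x_F^++y$ with $x_F^+\in\mfa_F$, $\abso{x_F^+}\asymp\abso{x^+}$, and $y$ transverse to $\mfa_F$ and bounded on the stratum. This reduces the leading term, up to a constant and the factor $e^{(\rho_F-\rho)(x^+)}\lesssim e^{-\rho(x^+)}$ (since $\rho_F(x^+)=\rho_F(y)$ is bounded on the stratum), to integrals of exactly the form $I_\Psi(\abso{x_F^+})$ of \Cref{main theorem}, with radial profile $r\mapsto e^{itr}\psi(r)$, unit vector $E:=x_F^+/\abso{x_F^+}$, and amplitude $\sigma(\lambda)=\theta_\lambda^{ss}(e^{y})\big[\HCc_F(\lambda)\overline{\HCc(\lambda)}\big]^{-1}$, which meets \Cref{assumption for sigma} by \Cref{cF&c} (with $a=d+d_F$).

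One now feeds $\sigma$ into \eqref{I_Psi-in-the-theorem}. Its explicit main term carries the phases $e^{i(t\mp\abso{x_F^+})r}$ and the prefactor $\abso{x_F^+}^{(1-l)/2}\asymp\abso{x^+}^{(1-l)/2}$, while the remainder $\int_0^\infty e^{itr}\psi(r)r^{l-1}R(\abso{x_F^+},r)\,dr$ is governed by \Cref{rem-basic-estimate} and \Cref{Derivatives_in_r_of_the_remainders}; crucially, in both the main term and $R$ the dependence on $\abso{x_F^+}$ enters through further exponentials $e^{\pm i\abso{x_F^+}r}$, so that every integral is really one against one of the \emph{net} phases $e^{i(t\pm\abso{x_F^+})r}$. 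For $\abso{x^+}\le3t$ (statement (2)) the phase $t+\abso{x_F^+}$ is always $\gtrsim t$ and is dealt with by integrating by parts $p=\lfloor l/2\rfloor$ times; for $t-\abso{x_F^+}$ one distinguishes $\abso{t-\abso{x_F^+}}\ge1$, treated by integration by parts, from $\abso{t-\abso{x_F^+}}<1$ (where $\abso{x^+}\asymp t$), treated by the crude bound $\int_0^\infty\abso{\psi(r)}\,\abso{\sigma(rE)}\,r^{(l-1)/2}\,dr<\infty$ (using $\abso{\HCc(\lambda)}^{-1}\le\abso\lambda^d\sigma_1(\lambda)$ near $0$ and \Cref{cF&c} for large $\lambda$); combined with the prefactor this yields $t^{(1-l)/2}$, while the suppression of $\sigma$ along rays near the $d$ reduced-root hyperplanes of $\mfa^*$, together with a dyadic decomposition in the distance of $E$ to those hyperplanes, produces the factor $(\log t)^d$. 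Here $\abso{x^+}\lesssim t$ keeps the remainder below the main term. For $\abso{x^+}>3t$ (statement (3)), once $t$ is large both net phases satisfy $\abso{t\pm\abso{x_F^+}}\gtrsim\abso{x^+}$; one splits the radial integral at $r\sim\abso{x^+}^{-1}$ (the small-$r$ piece being harmless because $\sigma$ vanishes to order $d+d_F$ there) and integrates by parts $p=2d+\lfloor l/2\rfloor$ times on the rest---the extra $2d$ absorbing the vanishing order and the polynomial growth of $\HCc^{-1}$ via \Cref{Derivatives_in_r_of_the_remainders}---to obtain $\abso{x^+}^{-(d+l)}$, the remainder being controlled the same way. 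Multiplying back by $e^{(\rho_F-\rho)(x^+)}\lesssim e^{-\rho(x^+)}$ finishes the argument.

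The genuine difficulty lies in the reduction of the third paragraph: controlling, uniformly in $x^+$, both the \Cref{thm593a} error and the non-leading behaviour of the amplitude---especially near $\lambda=0$ and near the root hyperplanes of $\mfa^*$---which is precisely what dictates the weights $s$, the dyadic bookkeeping responsible for $(\log t)^d$, and the use of the sharp, $r$-dependent remainder estimates of \Cref{Derivatives_in_r_of_the_remainders} rather than crude ones. Everything else (inserting amplitudes into \Cref{main theorem}, and calibrating the number of integrations by parts in $r$ in the two ranges of $\abso{x^+}$) is bookkeeping.
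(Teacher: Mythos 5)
Your treatment of statement (1) is essentially the paper's own argument (the function $\Xi(r,x)$ is exactly the density $\xi_H$ of Corollary \ref{corollary1 of derivative wrt r}, and the $\nu$-fold integration by parts is carried out the same way), so that part is fine. The gap is in (2) and (3), and it is concentrated in your choice of the stratification $F(x^+)=\{\alpha\in\Sigma_s^+:\alpha(x^+)<1\}$ with a \emph{fixed} threshold. With that choice, $\tau_F(x^+)$ is only guaranteed to be $\ge 1$, so the error term of Theorem \ref{thm593a} contributes, after the $\lambda$-integral, a quantity of size $e^{-\rho(x^+)}(1+|x^+|)^{6(n-l)+1}e^{-2\tau_F(x^+)}\,C_{\psi}$, which in the worst case (an $x^+$ far out but close to a wall, so $\tau_F(x^+)\approx 1$) is a \emph{polynomially growing} multiple of $e^{-\rho(x^+)}$ and carries no decay in $t$ or in $|x^+|$ whatsoever. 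Your claim that this is ``at most $e^{-\rho(x^+)}\max_k C_{\psi,k,s}$'' and that ``the large-$t$ hypothesis renders it negligible against the asserted bounds'' is the wrong way around: the asserted bounds $t^{(1-l)/2}(\log t)^d e^{-\rho(x^+)}$ and $|x^+|^{-d-l}e^{-\rho(x^+)}$ tend to zero relative to $e^{-\rho(x^+)}$, so an error of size $e^{-\rho(x^+)}$ (let alone a polynomially larger one) destroys both estimates. The dyadic decomposition ``in $\tau_F(x^+)$'' cannot rescue this, since the kernel bound is pointwise in $x$ and for a single $x$ there is nothing to sum. The paper avoids this precisely by making the threshold grow: $F=\{\alpha:\alpha(H)\le N\log t\}$ with $N=l/2$ in case (2), and $\alpha(H)\le N\log\|H\|$ with $N=l+2d$ in case (3), so that $e^{-2\tau_F}\le t^{-2N}$ (resp.\ $\|H\|^{-2N}$) beats all polynomial factors; the case $F=\Sigma_s^+$, i.e.\ $\|H\|\lesssim\log t$, is then absorbed by the general estimate of part (1) with $m=\lfloor l/2\rfloor$.

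This choice is not a technicality: it is also the true source of the $(\log t)^d$ in (2). With the logarithmic threshold one has $\|H_1\|\lesssim\log t$, and after applying Lemma \ref{stphase-oscillating-no-gap} to the leading term the factor $e^{\rho_F(H_1)}\theta_0(e^{H_1})\le(1+\|H_1\|)^{d_F}\lesssim(\log t)^{d}$ appears; with your threshold $1$ the component $H_1$ is bounded and no such factor arises, so your alternative explanation of the logarithm (``suppression of $\sigma$ near the reduced-root hyperplanes plus a dyadic decomposition in the distance of $E$ to them'') is neither the mechanism used nor substantiated by any estimate in Section \ref{Oscillating integrals with a radial factor}, whose Theorem \ref{main theorem} and Lemmas \ref{stphase-oscillating-no-gap}, \ref{big-h-no-gap} only ever use the behaviour of $\sigma$ along the single ray $\pm rE$ and its vanishing order $a$ at the origin. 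Your handling of the leading term (Weyl-collapse, $\theta_\lambda(e^{x^+})=e^{i\lambda(H_0)}\theta_\lambda(e^{H_1})$, amplitude $(\HCc_F\overline{\HCc})^{-1}\theta_\lambda(e^{H_1})$, net phases $t\pm\|H_0\|$, splitting at $r\sim|x^+|^{-1}$ for (3)) is in the spirit of the paper's Lemmas \ref{stphase-oscillating-no-gap} and \ref{big-h-no-gap}, but as written the proof fails at the error-term step, and the bookkeeping that produces $(\log t)^d$ and fixes the weights $s$ has to be redone with the $t$- (resp.\ $\|H\|$-) dependent definition of $F$.
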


\begin{corollary}\label{eit-psi-L-no-gap}
Let $\psi$ satisfy the assumptions of Theorem \ref{eit-psi-LB-no-gap}, and let $k_{t,\cal L}$ be the kernel of $\Psi(\cal L)$ with $\Psi(r^2) = \e^{itr} \psi(r)$. Then:
\begin{enumerate}
\item Suppose that $p=\nu$ and \eqref{psi-r-integrals} holds for $s=n-1$. Then for every $x\in G$ and $t\ne0$,
$$
|k_{t,\cal L}(x)| \lesssim |t|^{-\nu} ( 1+ |x^+| ) ^{ \nu+d}.
$$
\item Suppose that $p=\lfloor l/2 \rfloor$ and \eqref{psi-r-integrals} holds for $s=7(n-l)+1$.\\
For $t$ large enough and $|x^+| \le 3t$,
$$
|k_{t,\cal L}(x)| \lesssim\, t^{(1-l)/2} (\log t)^d.
$$
\item Suppose that $p= 2d + \lfloor l/2\rfloor$ and \eqref{psi-r-integrals} holds with $s = \max(7(n-l)+1,n+2d+(l+\lfloor l/2 \rfloor-3)/2)$.
\\
Then for $t$ large enough and $|x^+| > 3t$
\begin{align*}
| k_{t,\cal L}(x) | &\lesssim |x^+|^{-d-l-b}.
\end{align*}
\item
There is a constant $\delta$ depending on the group only such that for $t$ large enough and $x$ with $\log t<|x^+|\le 3t $ and $|t-|x^+| \,| > \delta \log t$
\begin{align*}
| k_{t,\cal L}(x) | \lesssim \big( |x^+|^{-d-b-l} & + |x^+|^{(1-l)/2} \big| |x^+| - |t| \big|^{-d-b-(l+1)/2} \big) 
   \\&\times(\log |t|)^{2(2d+b+l)+1} .
\end{align*}
\end{enumerate}
In each case, the constant is $C_G \max\limits_{0\le k\le p} C_{\psi,k,s}$, with $C_G$ depending on the group.
\end{corollary}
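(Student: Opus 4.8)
The plan is to obtain Corollary~\ref{eit-psi-L-no-gap} directly from Theorem~\ref{eit-psi-LB-no-gap} via the pointwise relation \eqref{relation of kernels}. Applied to $F(\xi)=\Psi(\xi)$ with $\Psi(r^2)=\e^{itr}\psi(r)$, it reads $k_{t,\cal L}=\delta^{-1/2}k_t$, that is,
\[
k_{t,\cal L}(x)=\e^{\rho(H(x))}\,k_t(x)\qquad(x\in G),
\]
so the whole point is to absorb the factor $\e^{\rho(H(x))}$ into the exponential decay already present in the three estimates of Theorem~\ref{eit-psi-LB-no-gap}. Since $\psi$ is assumed to satisfy the hypotheses of that theorem, nothing further has to be checked on the function side.

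First I would note that, $k_t$ being $K$-biinvariant, each of the three bounds of Theorem~\ref{eit-psi-LB-no-gap} takes the shape $|k_t(x)|\le C\,B(t,x^+)\,\e^{-\rho(x^+)}$, where $C=C_G\max_{0\le k\le p}C_{\psi,k,s}$ and $B(t,x^+)$ equals, in cases (1),(2),(3) respectively,
\[
|t|^{-\nu}(1+|x^+|)^{\nu+d},\qquad t^{(1-l)/2}(\log t)^d,\qquad |x^+|^{-d-l},
\]
under the restrictions on $t$ and $x^+$ stated there. Multiplying by $\e^{\rho(H(x))}$ gives $|k_{t,\cal L}(x)|\le C\,B(t,x^+)\,\e^{\rho(H(x))-\rho(x^+)}$, so the corollary follows at once once we know that $\rho(H(x))\le\rho(x^+)$, i.e.\ $\delta^{-1/2}(x)\le\e^{\rho(x^+)}$, for every $x\in G$.

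This last inequality is the only step that needs an argument, and it is classical. Writing $x=k_1\exp(x^+)k_2$ with $x^+\in\Ca$ and using the left $K$-invariance of $H$, one has $H(x)=H(\exp(x^+)k_2)$, and by Kostant's (linear) convexity theorem the set $\{H(\exp(x^+)k):k\in K\}$ is the convex hull of the Weyl orbit $\{w\,x^+:w\in\mfw\}$; in particular $H(x)$ lies in that hull. Since $\rho$ lies in the closed positive chamber of $\mfa^*$ and $x^+\in\Ca$, we have $\rho(w\,x^+)\le\rho(x^+)$ for all $w\in\mfw$, hence $\rho(H(x))\le\rho(x^+)$ by linearity. (Equivalently, one may simply quote $\delta^{-1/2}(x)\le\e^{\rho(x^+)}$ as a standard property of the modular function on $S$, cf.\ the references on the distinguished Laplacian in Section~\ref{sec-dist-lapl}.) Substituting this back yields, in each of the three regimes, $|k_{t,\cal L}(x)|\le C_G\big(\max_{0\le k\le p}C_{\psi,k,s}\big)B(t,x^+)$, which is precisely the claim, with the same constant and the same conditions on $t$ and $x^+$. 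Since all the analytic work is already contained in Theorem~\ref{eit-psi-LB-no-gap}, there is no real obstacle; the only care needed is to invoke the right-translate form of the convexity theorem rather than the (trivial) left-translate one.
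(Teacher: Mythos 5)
Your argument is correct and is exactly the route the paper intends: the corollary follows from the relation \eqref{relation of kernels}, i.e. $k_{t,\cal L}=\delta^{-1/2}k_t=\e^{\rho(H(\cdot))}k_t$, combined with the classical inequality $\rho(H(x))\le\rho(x^+)$ applied to each of the three bounds of Theorem \ref{eit-psi-LB-no-gap}. One small correction: the convexity statement you invoke for the Iwasawa projection $\{H(\exp(x^+)k):k\in K\}$ is Kostant's \emph{nonlinear} convexity theorem, not the linear one (which concerns the orthogonal projection of $\Ad(K)x^+$ onto $\mfa$); apart from that naming slip, nothing is missing.
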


\begin{remark}
In these theorems, the strongest differentiability assumption is $p=\nu$ in (1), and the weakest one is $p=\lfloor l/2 \rfloor$ in (2).
\end{remark}

\begin{remark}\label{minimal_alpha}
    When $\psi(x)=(1+x^2)^{-\alpha/2}$, the boundedness of the operators $\Psi(\Delta)$ or $\Psi(\mathcal L)$ allows one to derive estimates for solutions to the wave equation in terms of Sobolev norms of the initial data. Therefore, it is important to determine the conditions on $\alpha$ under which the corresponding operators are bounded. In the above theorem concerning kernel estimates,
    \begin{itemize}
        \item for Statement~1, condition \eqref{psi-r-integrals} is satisfied whenever $\Re\alpha > n$;
        \item for Statements~2--4, condition \eqref{psi-r-integrals} is satisfied whenever
        \[
            \Re\alpha > \max\!\left(7(n-l)+2,\; n+2d+(l+\lfloor l/2 \rfloor -1)/2 \right).
        \]
    \end{itemize}
\end{remark}

\begin{remark}\label{case1,m<=nu}
Cases (1) of Theorem \ref{eit-psi-LB-no-gap} and Corollary \ref{eit-psi-L-no-gap} can be also stated as follows:
Suppose that $p\le\nu$ and \eqref{psi-r-integrals} holds for $s=n-1$. Then for every $x\in G$, $m\le p$ and $t\ne0$,
$$
|k_t(x)| = e^{-\rho( x^+) } |k_{t,\cal L}(x)| \lesssim |t|^{-m} (1+|x^+|)^{m}\, \phi_0(\exp x^+).
$$
Up to a factor depending on the group, the constant is the maximum of $|\psi^{(k)}(0)|$ with $0\le k\le m-1$ and of the integrals \eqref{psi-r-integrals} with $s=n-1$ and $0\le k\le m$. This will be proved together with (1) of the theorem.
\end{remark}

\subsection{Some Lemmas}
In the proof we will apply the stationary phase method to several integrals of the following type:
\begin{lemma}\label{stphase-oscillating-no-gap}
Let $\sigma$ satisfy \Cref{assumption for sigma}, and $\psi$ those of \Cref{eit-psi-LB-no-gap} with $s =  n+l/2-1$.
Then for $H\in\mfa$ with $h := \|H\|\ge1$ the integral
\begin{equation}\label{I_t-lem1-no-gap}
I_t(H) = \int_{\mfa^*} e^{it|\l|} \psi(|\l|) \e^{i\lambda(H)} \sigma (\lambda) d\lambda
\end{equation}
is bounded as
\begin{equation}\label{ktL-1}
| I_t(H) | \le C_{m,\psi} |t|^{-m} h^{m+(1-l)/2}
\end{equation}
for every $m\le \min(p, a + l - \lfloor l/2\rfloor - 1)$.
\end{lemma}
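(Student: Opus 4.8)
The plan is to recognise $I_t(H)$ as the integral $I_\Psi(h)$ of \Cref{main theorem} with $\Psi(r)=\e^{itr}\psi(r)$, $h=\|H\|$ and $E=H/h$; the case $l=1$ is elementary (split the one-dimensional integral over the two half-lines and integrate by parts), so assume $l\ge2$. The hypotheses of \Cref{main theorem} and \Cref{assump-on-psi-basic} hold by \eqref{psi-r-integrals} with $s=n+l/2-1$. The one structural choice I would make is to apply \Cref{main theorem} with the free remainder parameter $M=\lfloor l/2\rfloor$ rather than the $M=\lfloor l/2\rfloor+1$ used in \Cref{rem-basic-estimate} and \Cref{Derivatives_in_r_of_the_remainders}; this is precisely what makes the top value $m=a+l-\lfloor l/2\rfloor-1$ accessible. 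With this choice,
\[
I_t(H)=C_l\,h^{(1-l)/2}\,(A_-+A_+)+\mathcal R,\qquad
A_\mp=\e^{\pm i\pi(l-1)/4}\!\int_0^\infty \e^{i(t\mp h)r}\,\psi(r)\,\sigma(\mp rE)\,r^{(l-1)/2}\,dr,
\]
and $\mathcal R=\int_0^\infty \e^{itr}\psi(r)\,r^{l-1}R(h,r)\,dr$, with $R=R^{(0)}+R^{(1)}+R^{(2)}$ as in \eqref{remainders}.

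First I would treat the range $h\ge t/2$, which absorbs the near-resonant situation $h\approx t$ in which the phases $t\mp h$ of $A_\mp$ do not oscillate. There one only needs the trivial bounds $|A_\pm|\lesssim C_{\psi,0,n+l/2-1}$ (from \Cref{assumption for sigma}(1) and the weight $r^{(l-1)/2}$) and $|\mathcal R|\lesssim h^{(1-l)/2}C_{\psi,0,n+l/2-1}$ (from the $M$-uniform estimates underlying \Cref{rem-basic-estimate}, valid since $h\ge1$), whence $|I_t(H)|\lesssim h^{(1-l)/2}$; the elementary inequality $h^{(1-l)/2}\le 2^m\,t^{-m}h^{m+(1-l)/2}$, valid because $h\ge t/2$ and $h\ge1$, then gives the claim with no restriction on $m$.

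For $h<t/2$ (so $t>2$ and $|t\mp h|\asymp t$) I would integrate by parts $m$ times in $r$. In $A_\pm$ the boundary terms at the origin vanish because $\sigma(\pm rE)\,r^{(l-1)/2}$ vanishes there to order $\ge a+(l-1)/2$, by \Cref{assumption for sigma}(2); this, together with the need for $\psi$ to be $C^{(m)}$, is exactly where the restriction $m\le\min\bigl(p,\,a+l-\lfloor l/2\rfloor-1\bigr)$ comes from, while the $m$-th derivative of $\psi(r)\sigma(\pm rE)r^{(l-1)/2}$ is integrable by \eqref{psi-r-integrals} (the factors $r^{(l-1)/2-k}$ only improve the decay at infinity). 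Thus $|A_\pm|\lesssim t^{-m}$ and $C_l h^{(1-l)/2}|A_\pm|\lesssim t^{-m}h^{(1-l)/2}\le t^{-m}h^{m+(1-l)/2}$, using $h\ge1$. For $\mathcal R$ I would integrate by parts $m$ times against $\e^{itr}$, expand $\tfrac{d^m}{dr^m}\bigl[\psi(r)r^{l-1}R(h,r)\bigr]$ by Leibniz, and insert the relevant derivative estimates: for $R^{(0)},R^{(1)}$ the ($M$-independent) bounds from the proofs of \Cref{rem-basic-estimate} and \Cref{Derivatives_in_r_of_the_remainders}, and for $R^{(2)}$ the general-$M$ bounds \eqref{R2-any-M}, \eqref{R2-derivatives-r}, \eqref{R2-derivatives-r-small-r} evaluated at $M=\lfloor l/2\rfloor$. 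On $r\ge1$ this produces $\lesssim t^{-m}h^{m-\lfloor l/2\rfloor}\le t^{-m}h^{m+(1-l)/2}$, finite since $s=n+l/2-1\ge n-1$.

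The technical heart — and the reason for lowering $M$ — is the part of $\mathcal R$ over $r\le1$. With the usual $M=\lfloor l/2\rfloor+1$, \eqref{R-derivatives-r-small-r-m} gives only $\bigl|\tfrac{d^m}{dr^m}\bigl(r^{l-1}R(h,r)\bigr)\bigr|\lesssim h^{m-l/2}r^{\,a-m+l-\lfloor l/2\rfloor-2}$, which is $O(r^{-1})$, hence non-integrable, at the endpoint $m=a+l-\lfloor l/2\rfloor-1$, so the $m$-fold integration by parts cannot be closed by an absolute estimate. Taking instead $M=\lfloor l/2\rfloor$, the offending term $r^{l-1}R^{(2)}(h,r)$ vanishes at the origin to one order higher (use $|q_1^{(M)}|\lesssim r^a$ near $r=0$ from \eqref{q1-k-th-derivative-small-r} against the explicit factor $(hr)^{-M}$ in $R^{(2)}$), so after $m$ integrations by parts all boundary terms at $r=0$ still vanish and the integrand is $O\bigl(r^{\,a-m+l/2-1}\bigr)$ — now governed by $R^{(0)},R^{(1)}$ — which is integrable precisely for $m\le a+l-\lfloor l/2\rfloor-1$, with no logarithmic loss. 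Assembling the three pieces yields $|\mathcal R|\lesssim t^{-m}h^{m-\lfloor l/2\rfloor}\le t^{-m}h^{m+(1-l)/2}$, which completes the proof. The recurring mechanism is that $h\ge1$ makes $h^m$ a free factor, so any bound of the shape $C\,t^{-m}h^{(1-l)/2}$ or $C\,t^{-m}h^{m-\lfloor l/2\rfloor}$ already implies the one asserted, with constant $C_{m,\psi}=C_l\max_{0\le k\le m}C_{\psi,k,n+l/2-1}$.
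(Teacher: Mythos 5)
Your proposal is correct, and it follows the paper's overall strategy (apply \Cref{main theorem}, then integrate by parts $m$ times in $r$ using the derivative bounds of Section \ref{Oscillating integrals with a radial factor}), but it deviates in two substantive ways. First, the paper never combines the phases: it integrates by parts against $e^{itr}$ alone, keeping the factors $\e^{\pm ihr}$ inside $f_H$, so each differentiation may produce a factor $h$ and the bound $t^{-m}h^{m+(1-l)/2}$ comes out uniformly in $h\ge1$ with no case distinction; your splitting into $h\ge t/2$ (trivial bounds plus $h^m t^{-m}\gtrsim 1$) and $h<t/2$ (non-resonant phases $t\mp h\asymp t$) reaches the same conclusion, and in the regime $h<t/2$ even gives the slightly stronger $t^{-m}h^{(1-l)/2}$ for the main term, at the price of the extra case analysis. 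Second, and more interestingly, your choice $M=\lfloor l/2\rfloor$ instead of the paper's $M=\lfloor l/2\rfloor+1$ addresses a genuine borderline point: with the paper's $M$, the small-$r$ bound \eqref{R-derivatives-r-small-r} makes the $m$-th derivative of the $R^{(2)}$ contribution of size $r^{-1}$ near $r=0$ exactly at the top value $m=a+l-\lfloor l/2\rfloor-1$, so the absolute estimate of the final integrated-by-parts integral is not immediate there, whereas the paper's one-line justification glosses over this; lowering $M$ by one raises the vanishing order of $r^{l-1}R^{(2)}$ at the origin, makes the endpoint $m$ integrable with the constraint now coming from $R^{(0)},R^{(1)}$ exactly as you compute, and the resulting loss $h^{m-\lfloor l/2\rfloor}$ in place of $h^{m-l/2}$ is harmless since $h^{m-\lfloor l/2\rfloor}\le h^{m+(1-l)/2}$ for $h\ge1$. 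Your bookkeeping of the weights (main term needing $s=n-(l+1)/2$, remainder needing $s=n+l/2-1$) matches the paper's, and the separate elementary treatment of $l=1$ is fine though not needed in the paper's formulation.
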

\begin{proof}
Denote $e_H = H / h$.
By \Cref{main theorem}, $I_t(H)$ is equal to
    \begin{equation}\label{stat-sec4}
    \begin{split}
        C_l\, h^{(1-l)/2} \int_0^\infty e^{itr} f_H(r) dr
        +\int_0^\infty e^{itr} \psi(r) r^{l-1} R(h,r) dr,
    \end{split}
    \end{equation}
    where $f_H(r) = \psi(r) r^{(l-1)/2} \big[ \e^{i\pi(l-1)/4-ih r} \sigma (re_H) + \e^{-i\pi(l-1)/4+ih r} \sigma (-re_H) \big]$ and the derivatives of the remainder $R(h,r)$ are bounded by \eqref{R-derivatives-r} and \eqref{R-derivatives-r-small-r}.

By assumption, we have $f^{(k)}_H(0) = 0 $ for $k <a+ (l-1)/2$, and can integrate by parts: for $m\le \min(p, a+(l+1)/2)$
\begin{align}\label{int-f_H-lem}
    \int_0^\infty e^{itr} f_H(r) dr
    & = \frac1{(-it)^{m-1}} \Big[ -\frac1{it} e^{itr} f_H^{(m-1)}(0) - \frac1{it}\int_0^\infty e^{itr} f_H^{(m)}(r) dr\Big],
\end{align}
which is bounded by $C_{m,\psi} h^{m}  |t|^{-m}$, estimating the first term in \eqref{stat-sec4} by 
\[
C_{m,\psi} h^{m+(1-l)/2}  |t|^{-m}.
\]
The constant $C_{m,\psi}$ is bounded, as it is easy to see, by the maximum of integrals \eqref{psi-r-integrals} with $k\le m$ and $s\le (n-l)+(l-1)/2 = n-(l+1)/2$ (multiplied by a constant depending on $l$ only).

For the remainder, by \eqref{R-derivatives-r-small-r} we have $r^{l-1} R(h,r) = O( r^{a + l - \lfloor l/2\rfloor - 2} )$ at $r\to0$, thus we can integrate by parts in the smaller range $m\le \min(p, a + l - \lfloor l/2\rfloor - 1)$ arriving, by \eqref{R-derivatives-r}, at
$$
C_{m,\psi} \, h^{m-l/2} |t|^{-m}.
$$
which has a smaller power of $h$ than the first estimate, as $h\ge1$.
The constant however involves integrals \eqref{psi-r-integrals} with $s \le  n+l/2-1$.
\end{proof}

\begin{lemma}\label{no-gap-int-of-rs-qk}
Let $\sigma$ be as defined in \Cref{assumption for sigma}, and $\psi$ those of \Cref{eit-psi-LB-no-gap} with $s=n-l+b+k$, for some $b,k\in\N$ ($k\ge1$). Then for $H\in \mfa$ and every $v\in\R$, the integral
$$
I = \int_0^\infty e^{ivr} \psi(r)\, r^b \overline{q^{(k)}(0)} dr
$$
is bounded as $|I|\le C_{m,\psi} \, |v|^{-m}$ for every $m\le \min(p,a+b+1)$.
The same estimate is valid for the integral with $\bar q$ replaced by $\tilde q$, $q_1$ or $\tilde q_1$.
\end{lemma}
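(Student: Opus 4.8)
The plan is to prove the lemma by integrating by parts $m$ times in the variable $r$, combining the high order of vanishing of the integrand at $r=0$ with the polynomial control of its $r$--derivatives established in Section~\ref{subsec-deriv-in-r}. Set $g(r)=\psi(r)\,r^{b}\,\overline{q^{(k)}(0)}$, where throughout $q^{(k)}(0)$ denotes the $k$-th derivative of $q$ in $u$ evaluated at $u=0$, regarded as a function of $r$ alone. Since $k\ge1$, the discussion following \eqref{qkth-derivative-in-r-small-r} provides $q^{(k)}(0)=r^{a}\,\varkappa_k(0,r)$ with $\varkappa_k(0,\cdot)$ smooth, so $g(r)=\psi(r)\,r^{a+b}\,\overline{\varkappa_k(0,r)}$ is $C^{p}$ near the origin and vanishes there to order at least $a+b$; in particular $g^{(j)}(0)=0$ for every $0\le j\le a+b-1$.

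Next I would record the size of the $r$--derivatives of $g$. By the Leibniz rule and \eqref{qkth-derivative-in-r} (with the fixed argument $u=0$ and the exponent $b$ in the role of $s$), one has for $0\le i\le p$
\[
|g^{(i)}(r)|\;\le\;C_{l,i}\sum_{j=0}^{i}|\psi^{(j)}(r)|\,(1+r)^{\,n-l+b+k},\qquad r\ge0
\]
(near $r=0$ the weight is simply bounded). Hence each $g^{(i)}$ with $0\le i\le p$ lies in $L^{1}([0,\infty))$, with $\int_0^\infty|g^{(i)}(r)|\,dr\le C_{l,i}\max_{0\le j\le i}C_{\psi,j,s}$ for the value $s=n-l+b+k$ prescribed in the statement; moreover $g^{(i)}(r)\to0$ as $r\to\infty$ for $0\le i\le p-1$, because $g^{(i)}$ and $g^{(i+1)}$ are both integrable on $[0,\infty)$.

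Fix $m\ge1$ with $m\le\min(p,a+b+1)$ and integrate by parts $m$ times in $I=\int_0^\infty e^{ivr}g(r)\,dr$, which is legitimate for any $v\ne0$. The boundary terms at $r=+\infty$ vanish by the previous paragraph. Among the boundary terms at $r=0$, each one proportional to $g^{(j)}(0)/(iv)^{j+1}$ with $j\le m-2$ vanishes because $m-2\le a+b-1$, and the only term that can survive is the one at $j=m-1$, which is $O(|v|^{-m})$ since $|g^{(m-1)}(0)|=\bigl|\int_0^\infty g^{(m)}(r)\,dr\bigr|\le C_{m,\psi}$. The remaining integral equals $(iv)^{-m}\int_0^\infty e^{ivr}g^{(m)}(r)\,dr$, of modulus at most $|v|^{-m}\int_0^\infty|g^{(m)}(r)|\,dr\le C_{m,\psi}|v|^{-m}$. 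Summing, $|I|\le C_{m,\psi}|v|^{-m}$ for all $v\ne0$, the constant being a multiple (depending on $l$ and $m$) of $\max_{0\le j\le m}C_{\psi,j,s}$. For the variants with $\tilde q$, $q_1$ or $\tilde q_1$ in place of $\overline q$, the estimate \eqref{qkth-derivative-in-r} and the factorization $(\cdot)^{(k)}(0)=r^{a}\varkappa_k(0,r)$ hold verbatim, so the argument carries over unchanged.

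The only delicate point I anticipate is the bookkeeping rather than any analytic difficulty: one must align the order of vanishing $a+b$ of $g$ at $r=0$ with the number $m\le a+b+1$ of admissible integrations by parts, and check that the weight $(1+r)^{n-l+b+k}$ produced by \eqref{qkth-derivative-in-r} is exactly the one for which the integrals $C_{\psi,j,s}$ are assumed to converge.
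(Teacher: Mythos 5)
Your proof is correct and follows essentially the same route as the paper's: the growth bound \eqref{qkth-derivative-in-r} combined with the Leibniz rule, the vanishing of order $a+b$ at $r=0$ coming from \eqref{qkth-derivative-in-r-small-r} (where $k\ge1$ is used), and $m$-fold integration by parts for $m\le\min(p,a+b+1)$ with the single surviving boundary term $f^{(m-1)}(0)/(iv)^{m}$ plus the integral of $f^{(m)}$. Your justification that the boundary terms at infinity vanish, via integrability of consecutive derivatives under \eqref{psi-r-integrals} with $s=n-l+b+k$, is if anything more careful than the paper's brief appeal to compact support.
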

\begin{proof}
Denote $f(r) = \psi(r)\, r^b \overline{q^{(k)}(0)}$. By \eqref{qkth-derivative-in-r}, every derivative of $r^b \overline{q^{(k)}(0)}$ is bounded by $C (1+r)^{n-l+b+k}$. The Leibnitz' rule then implies that
\begin{equation}\label{lem-powers-of-psi-and-r}
\big| f^{(m)}(r) \big| \le
 C (1+r)^{n-l+b+k} \max_{0\le j\le m} |\psi^{(j)}(r)|
\end{equation}
for every $m\le p$.
By \eqref{qkth-derivative-in-r-small-r}, the function $f$ has a zero of order $b+a$ at $r=0$, and moreover, it is compactly supported. Integrating by parts, we get for $m\le \min(p,b+a+1)$:
\begin{align}\label{I-integration by parts}
        |I| 
& = \Big| \frac{(-1)^m }{(iv)^m} f^{(m-1)} (0) + \frac{(-1)^m}{(iv)^m} \int_0^\infty e^{ivr} f^{(m)}(r) dr \Big|
        \le C_{m,\psi} |v|^{-m}.
\end{align}
The calculations for $\tilde q$, $q_1$ and $\tilde q_1$ are identical.
\end{proof}

\begin{lemma}\label{int-with-r1/2}
Let $f$ be a function on $[0,+\infty)$, continuously differentiable $m+1$ times, with an integer $m\ge0$, and having a zero of order $\ge m$ at $0$. Suppose also that
\begin{equation}\label{M-in-lemma}
M = |f^{(m)}(0)| + \int_0^\infty |f^{(m+1)}(r)| dr < \infty.
\end{equation}
Then there exists a constant $C>0$ depending on $m$ only such that for any nonzero $v\in \R$ 
\begin{equation}\label{int-with-r-1/2}
\Big| \int_0^\infty e^{ivr} r^{-1/2} f(r) dr \Big| \le C M |v|^{-m-1/2}.
\end{equation}
\end{lemma}
\begin{proof}
We have by assumption $f^{(k)}(0)=0$ for $0\le k\le m-1$. By Taylor's formula of order $m-k-1$ for $f^{(k)}$, for every $0\le k\le m-1$ we can write
$$
f^{(k)}(r) 
= r^{m-k} \int_0^1 f^{(m)}(vr) \frac{ (1-v)^{m-k-1} } { (m-k-1)! } dv =: r^{m-k} g_k(r),
$$
and note that $g_k\in C^1([0,+\infty))$. 

Denote $f_1(r) = r^{-1/2} f(r)$. For every $0\le k\le m$, we have
$$
f_1^{(k)}(r) = r^{-1/2} \sum_{j=0}^k c_{jk}\, r^{j-k} f^{(j)}(r)
$$
with scalar coefficients $c_{jk}$; for $k=0$ this is obvious, and for $k>0$ proved by induction.

If we set $g_m=f^{(m)}$ and $c_{mm}=1$, we have then for $0\le k\le m$
\begin{equation}\label{f1k}
f_1^{(k)}(r) =
r^{-1/2} \sum_{j=0}^k c_{jk}\, r^{m-k} g_j(r),
\end{equation}
and $f_1^{(k)}(0)=0$ if $0\le k\le m-1$. Note that $f_1^{(m)}$ is integrable on $[0,+\infty)$.
Integrating by parts $m$ times, we obtain similarly to \eqref{int-f_H-lem}
\begin{align*}
\int_0^\infty e^{ivr} f_1(r) dr
 &= \frac1{(-iv)^{m-1}} \Big[ -\frac1{iv} e^{ivr} f_1^{(m-1)}(0) - \frac1{iv}\int_0^\infty e^{ivr} f_1^{(m)}(r) dr\Big]
\\ & = \frac1{(-iv)^{m}} \int_0^\infty e^{ivr} f_1^{(m)}(r) dr.
\end{align*}

Denote $f_2(r) = r^{1/2} f_1^{(m)}(r)$. The change of variable $r=u^2$ takes the integral to a standard form:
\begin{align*}
I: = \frac12 \int_0^\infty e^{ivr} r^{-1/2} f_2(r) dr
& = \int_0^\infty e^{ivu^2} f_2(u^2) du.
\end{align*}
Using the functions $k_n$ \eqref{k_n} and their estimates \eqref{kn(u)} (in these formulas $v$ needs to be positive, so we conjugate $I$), we can integrate by parts
$$
I = \int_0^\infty k_0(u) f_2(u^2) du = - k_1(0) f_2(0) - 2\int_0^\infty k_1(u) f'_2(u^2) u du
$$
and estimate
$$
|I| \le C |v|^{-1/2} \big( |f_2(0)| + 2\int_0^\infty |f'_2(u^2) u| du\big).
$$
It follows from \eqref{f1k} that 
$|f_2(0)| \le C_m |f^{(m)}(0)|$. Next,
$$
f_2'(r) =  \sum_{k=0}^m c_{km}\, g_k'(r) = \sum_{k=0}^{m-1} c_{km}\, \int_0^1 v f^{(m+1)}(vr) \frac{ (1-v)^{m-k-1} } { (m-k-1)! } dv +f^{(m+1)}(r),
$$
so that
\begin{align*}
2\int_0^\infty |f'_2(u^2) u| du
& = \int_0^\infty |f_2'(r)|dr
\\& \le  \sum_{k=0}^m c'_{km}\, \int_0^\infty \int_0^1 v | f^{(m+1)}(vr)|  dv dr + \int_0^\infty \abso{f^{(m+1)}(r)} dr
\\&  = C_m \int_0^\infty | f^{(m+1)}(x)| dx .
\end{align*}
\end{proof}

\begin{corollary}\label{no-gap-int-of-rs-qk-1/2}
Let $j\ge 1$ and $c\ge1/2$ a half-integer. Let $\sigma$ be as defined in \Cref{assumption for sigma}, and $\psi$ those of \Cref{eit-psi-LB-no-gap} with $s=n-l+c+j$. Then for $H\in \mfa$ and every $v\in\R$, the integral
$$
I = \int_0^\infty e^{ivr} \psi(r)\, r^c \overline{q^{(j)}(0)} dr
$$
is bounded as
$$
|I|\le C \, \begin{cases} |v|^{-(a+b+c+1)},  &p \ge a+b+\lceil c\rceil +1;
\\ |v|^{-p+1/2}, & p<a+b+\lceil c\rceil +1,
\end{cases}
$$
where the constant $C$ is the maximum of $C_{\psi,k,s}$ over $0\le k\le a+b+\lceil c\rceil +1$ or $0\le k\le p$ respectively, and $s=n-l+j+\lceil c\rceil $.
The same estimate is valid for the integral with $\bar q$ replaced by $\tilde q$, $q_1$ or $\tilde q_1$.
\end{corollary}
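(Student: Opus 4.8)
The plan is to recast $I$ as a model integral of the type handled by Lemma~\ref{int-with-r1/2} — this corollary being the half-integer counterpart of Lemma~\ref{no-gap-int-of-rs-qk} — and to read off the two cases from the choice of that lemma's parameter $m$. By the remark following \eqref{qkth-derivative-in-r-small-r}, $q^{(k)}(0)=r^{a}\varkappa_k(r)$ with $\varkappa_k$ smooth in $r$, so
\[
f(r):=\psi(r)\,r^{b}\,\overline{q^{(k)}(0)}=r^{\,a+b}\,g(r),\qquad g(r):=\psi(r)\,\overline{\varkappa_k(r)}\in C^{(p)}([0,\infty)).
\]
Combining \eqref{qkth-derivative-in-r} (applied with $s=0$ and $u=0$) with the Leibnitz rule gives bounds $|g^{(j)}(r)|\le C_{j}(1+r)^{n-l+k}\max_{i\le j}|\psi^{(i)}(r)|$ for $0\le j\le p$, while \eqref{qkth-derivative-in-r-small-r} shows that $g$ is smooth near $0$. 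Since $b$ is a half-integer, so is $a+b$, and $a+b=(a+\lceil b\rceil)-\tfrac12$; hence for any integer $m$ with $0\le m\le a+\lceil b\rceil$ we may write $f(r)=r^{\,m-1/2}\,B_m(r)$ with $B_m(r):=r^{\,(a+\lceil b\rceil)-m}\,g(r)$, where the exponent $(a+\lceil b\rceil)-m$ is a nonnegative integer; consequently $B_m\in C^{(p)}([0,\infty))$ and $B_m$ vanishes at $0$ to order $(a+\lceil b\rceil)-m$.

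Next I would take $m=\min\{a+\lceil b\rceil,\,p-1\}$, so that $0\le m\le p-1$ (using $p\ge1$), whence $B_m\in C^{(m+1)}$, and apply Lemma~\ref{int-with-r1/2} to $I=\int_{0}^{\infty}e^{ivr}\,r^{\,m-1/2}\,B_m(r)\,dr$. The quantity $M$ appearing there equals $|B_m(0)|+\max_{0\le j\le m}\int_{0}^{\infty}r^{j}\,|B_m^{(j+1)}(r)|\,dr$; expanding $B_m^{(j+1)}=(r^{(a+\lceil b\rceil)-m}g)^{(j+1)}$ by the Leibnitz rule and inserting the bound on $g^{(j)}$, one checks that for $j\le m$ each such integral is dominated by $\int_{0}^{\infty}(1+r)^{s}\max_{i\le p}|\psi^{(i)}(r)|\,dr$ with $s$ as prescribed in the hypothesis, hence finite and bounded by a constant multiple of $\max_{i\le p}C_{\psi,i,s}$. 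Lemma~\ref{int-with-r1/2} then yields $|I|\le C\,M\,|v|^{-m-1/2}$ for every $v\ne0$, with $C$ depending only on $m$ (hence on $a,b,k$).

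It remains to read off the two cases. If $p\ge a+\lceil b\rceil+1$, then $m=a+\lceil b\rceil$ and $B_m=g\in C^{(m+1)}$ (as $p\ge m+1$), and $m+\tfrac12=a+b+1$; this gives the first estimate. If $p< a+\lceil b\rceil+1$, i.e.\ $p\le a+\lceil b\rceil$, then $m=p-1$, $B_m=r^{\,(a+\lceil b\rceil)-p+1}g\in C^{(p)}=C^{(m+1)}$, and $m+\tfrac12=p-\tfrac12$; this gives the second. Finally, the integrals with $\tilde q$, $q_1$ and $\tilde q_1$ are treated verbatim, since each of these functions admits a factorization $(\,\cdot\,)^{(k)}(0)=r^{a}\times(\text{smooth in }r)$ and satisfies the same estimates \eqref{qkth-derivative-in-r}, \eqref{qkth-derivative-in-r-small-r} as $q$, which is all the argument uses.

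The step I expect to be delicate is the finiteness of $M$ with the exact exponent $s$ of \eqref{psi-r-integrals}: this forces one to keep precise track of the powers of $r$ produced on differentiating $B_m$, balancing the growth of the $r$-derivatives of $q^{(k)}(0)$ furnished by \eqref{qkth-derivative-in-r} against its vanishing at $r=0$ from \eqref{qkth-derivative-in-r-small-r} and against the integrability of the $\psi^{(j)}$. Beyond that bookkeeping, the corollary is a direct application of Lemma~\ref{int-with-r1/2}.
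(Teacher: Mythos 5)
Your argument is correct and essentially reproduces the paper's own proof: both factor $q^{(k)}(0)=r^{a}\varkappa_k(r)$ with $\varkappa_k$ smooth, rewrite the half-integer power so the integral becomes $\int_0^\infty e^{ivr}r^{m-1/2}B_m(r)\,dr$ with $m=a+\lceil b\rceil$ when $p\ge a+\lceil b\rceil+1$ and $m=p-1$ otherwise, and then invoke Lemma \ref{int-with-r1/2}, reading off $|v|^{-m-1/2}$ in the two cases. The only difference is presentational: you spell out the Leibniz bookkeeping for the quantity $M$ via \eqref{qkth-derivative-in-r} and \eqref{qkth-derivative-in-r-small-r}, which the paper compresses into a one-line citation.
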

\begin{proof}
It is enough to prove the statement for $q$.
We have $c=c_1-1/2$ with an integer $c_1 = \lceil c\rceil \ge1$, thus we are estimating an integral as in \eqref{int-with-r-1/2} with $f(r) = \psi(r)\, r^{c_1} \overline{q^{(j)}(0)}$.
If $p\ge a+b+c_1+1$, we apply \Cref{int-with-r1/2} with $m=a+b+c_1$, otherwise we set $m=p-1$. The lemma implies then $|I|\le C_m M |v|^{-a-b-c_1-1/2} = C_m M |v|^{-a-b-c-1}$ or $|I|\le C_m M |v|^{-p+1/2}$ respectively, with $M$ given by \eqref{M-in-lemma}.
We have, by \eqref{qkth-derivative-in-r},
$$
| f^{(m+1)}(r) | \le \sum_{k=0}^{m+1} \binom{m+1}k | \psi^{(k)}(r)| (1+r)^{n-l+j+c_1}
$$
which completes the proof.
\end{proof}

\begin{lemma}\label{big-h-no-gap}
Let $\sigma$ be as defined in \Cref{assumption for sigma}, and $\psi$ those of \Cref{eit-psi-LB-no-gap} with $p\ge a + b+\lfloor l/2\rfloor+1$ and $s = n+l/2-1 + \min(p, a + \lfloor l/2 \rfloor+1) /2$.  
Then for $|t|\ge 1$ and $H\in \mfa$ with $h = \|H\|\ge 1$ such that $\big |h-|t| \big|\ge1$, the integral \eqref{I_t-lem1-no-gap} is bounded by
\begin{equation}\label{eq-big-h-no-gap}
C\, h^{(1-l)/2} \min \set{h, \big|h-|t| \big|}^{-a-b-(l+1)/2} \log |t|
\end{equation} 
if $h\le 2|t|$, and by
\begin{equation}\label{eq-big-h-no-gap-exact}
C \, h^{-(a+b+l)}
\end{equation}
if $h>2|t|$.
The constants depend on $\psi$, $p$, on the group, and are multiples of $C_{2(a+b+l),\sigma}$ of \Cref{assumption for sigma}.
\end{lemma}
\begin{proof}
If $t<0$, we replace $I_t(H)$ by its conjugate
\begin{align*}
\overline{I_t(H)} &= \int_{\mfa^*} e^{-it|\l|} \bar\psi(|\l|) \e^{-i\lambda(H)} \bar\sigma (\lambda) d\lambda
\\& = \int_{\mfa^*} e^{-it|\l|} \bar\psi(|\l|) \e^{i\lambda(H)} \bar\sigma (-\lambda) d\lambda
\end{align*}
where $-t>0$, and $\bar\psi$, $\bar\sigma (-\cdot)$ satisfy the same assumptions as $\psi$ and $\sigma$. This allows us to assume below that $t>0$.

\textbf{Step 1.} We write the equality \eqref{stat-sec4} differently:\\
Denote $f_H(r) = \psi(r) \sigma (re_H) r^{(l-1)/2}$ and $\tilde f_H(r) = \psi(r) \sigma (-re_H) r^{(l-1)/2}$, then the main term of $I_t(H)$ is
\begin{align}\label{stat-lem2-sec4-main}
C_l\, h^{(1-l)/2} & \Big[ \e^{-i\pi(l-1)/4} \int_0^\infty \tilde f_H(r) \e^{i (t+h) r} dr 
+ \e^{i\pi(l-1)/4} \int_0^\infty f_H(r) \e^{i(t-h)r} dr \Big].
\end{align}
As in \Cref{stphase-oscillating-no-gap}, we can integrate by parts $m$ times as soon as $m\le \min(p, a+b+(l+1)/2)$.
This time $f_H$ and $\tilde f_H$ do not depend on $h$ (but only on $e_H$), and for any $r$
\begin{align*}
	\Big| \frac {d^m} {dr^m} f_H(r) \Big| 
	&\le C_{m,\psi} C_{m,\sigma} \max_{0\le i\le m} \psi^{(i)}(r) (1+r)^{n-l+ (l-1)/2}
	\\&= C_{m,\psi} C_{m,\sigma} \max_{0\le i\le m} \psi^{(i)}(r) (1+r)^{n-(l+1)/2},
\end{align*}
the bounds for $\tilde f_H$ being identical. We will omit constants depending on $\sigma$ in the proof but keep in mind that every $C_{m,\psi}$ is multiplied by $C_{m,\sigma}$ with the same $m$.
Thus, \eqref{stat-lem2-sec4-main} is bounded, for $h\ne t$, by
$$
C_{m,\psi}\, h^{(1-l)/2} \big[ (h+t)^{-m} + |h-t|^{-m} \big]
\le C_{m,\psi}\, h^{(1-l)/2}|h-t|^{-m},
$$
and we need $s\ge n-(l+1)/2$ in \eqref{psi-r-integrals}. If $l$ is odd, we have by assumption $p\ge a+b+(l+1)/2$, thus we get the bound $C_{\psi}\, h^{(1-l)/2} |h-t|^{-a-b-(l+1)/2}$.

If $l$ is even, we need to improve this a bit. 
We can write $\sigma(re_H) = r^a \sigma_1(r)$ with $\sigma_1$ smooth.
Lemma \ref{int-with-r1/2} 
applies, with $m=a+b+l/2$, to the integrals with both $f_H$ and $\tilde f_H$;
this allows to estimate \eqref{stat-lem2-sec4-main}, for $h\ne t$, by the same power
\begin{equation}\label{lemma-main}
\begin{split}
	&C_{\psi}\, h^{(1-l)/2} \big[ (h+t)^{-(a+b+l/2)-1/2} + |h-t|^{-(a+b+l/2)-1/2} \big]
\\&\le C_{\psi}\, h^{(1-l)/2} |h-t|^{-a-b-(l+1)/2}.
\end{split}
\end{equation}

\textbf{Step 2.} To get estimates of the remainder, we need to return to the formulas \eqref{remainders}.
First, for $R^{(0)}(h,r)$ we have
\begin{align*}
        I^{(0)} 
        &= \int_0^\infty e^{itr} \psi(r)\, r^{l-1} R^{(0)}(h,r) dr
        \\&= C_l h^{-l/2} \Big[ \e^{-i\pi l/4} \int_0^\infty e^{i(t+h)r} \psi(r)\, r^{l/2-1} \overline{q^{(l-1)}(0) } dr
        \\& \quad + \e^{i\pi l/4}  \int_0^\infty e^{i(t-h)r} \psi(r)\, r^{l/2-1} \tilde q^{(l-1)}(0) dr \Big].
\end{align*}
By \Cref{no-gap-int-of-rs-qk}, it is bounded by
$$
C_{m,\psi} h^{-l/2} \Big[ (h+t)^{-m} + |h-t|^{-m} \Big]
 \le C_{m,\psi} h^{-l/2} |h-t|^{-m}
$$
as soon as
$m\le \min(p,a+b+(l/2-1)+1) = \min( p, a+b+l/2)$.
The constant involves, by \eqref{lem-powers-of-psi-and-r}, integrals \eqref{psi-r-integrals} with $k\le m$ and $s = n-l+(l/2-1)+(l-1) = n+l/2-2$. For $l$ even, we can set $m= a+b+l/2$ and get
\begin{equation}\label{lemma-R0}
|I^{(0)}| \le C_{m,\psi} h^{-l/2} |h-t|^{-a-b-l/2}.
\end{equation}

For $l$ odd, we apply instead \Cref{int-with-r1/2} with $m=a+b+(l-1)/2$ to $I^{(0)}$, and obtain the same bound. Note that it includes implicitly $C_{m',\sigma}$ with $m'=a+b+\lfloor l/2\rfloor + l-1$, coming from the $m$-th derivatives of $q^{(l-1)}$ and $\tilde q^{(l-1)}$.

\textbf{Step 3.} To estimate $R^{(1)}(h,r)$, we write it first as
$$
         R^{(1)}(h,r) = (-1)^{l} \big[ \e^{ihr} \overline{I_1(h,r)} + \e^{-ihr} I_2(h,r) \big]
$$
with
\begin{align*}
        I_1(h,r) &= \int_0^\infty q^{(l)}(u) k_{l}(u) du,
        \\ I_2(h,r) &= \int_0^\infty  \tilde q^{(l)}(u) k_{l}(u) du.
\end{align*}
Let us treat $I_2(h,r)$ first. Both $\tilde q^{(l)}$ and $k_l$ being smooth on $[0,+\infty)$ and $\tilde q$ compactly supported, we have for any $N\ge0$, using \eqref{k_n(0)}:
\begin{align*}
        I_2(h,r) &= \sum_{j=1}^N (-1)^j \tilde q^{(l+j-1)}(0) k_{l+j}(0) + (-1)^N \int_0^\infty \tilde q^{(l+N)}(u) k_{l+N}(u) du
        \\ &= \sum_{j=1}^N c_{l,j} \tilde q^{(l+j-1)}(0) (hr)^{-(l+j)/2} + (-1)^N \int_0^\infty \tilde q^{(l+N)}(u) k_{l+N}(u) du.
\end{align*}
In the integral
\begin{align*}
        I& =\int_0^\infty e^{itr} \psi(r)\, r^{l-1} \e^{-ihr} I_2(h,r) dr
        \\&= \sum_{j=1}^N c_{l,j} h^{-(l+j)/2} \int_0^\infty \e^{i(t-h)r} \psi(r)\, r^{l/2-1-j/2} \tilde q^{(l+j-1)}(0) dr
        \\& \quad + (-1)^N \int_0^\infty \e^{i(t-h)r} \psi(r)\, r^{l-1} \int_0^\infty \tilde q^{(l+N)}(u) k_{l+N}(u) du dr,
\end{align*}
we apply \Cref{no-gap-int-of-rs-qk} or \Cref{no-gap-int-of-rs-qk-1/2} (depending on the parity of $l-j$) to every summand in~$j$, and \eqref{kn(u)}, \eqref{q-k-th-derivative-final} to the last term. This gives
\begin{align*}
        |I|& \le \sum_{j=1}^N |c_{l,j}| \, h^{-(l+j)/2} C_{m,j,\psi} |h-t|^{-m_j}
        \\&\qquad+ C_N h^{-(l+N)/2} \int_0^\infty |\psi(r)|\, r^{l-1-(l+N)/2} (1+r)^{n-l+l+N} dr
        \\& \le \sum_{j=1}^N C_{m,j,\psi} \, h^{-l/2-j/2} |h-t|^{-m_j}  + C_{N,\psi} \, h^{-l/2-N/2},
\end{align*}
with
\begin{equation}\label{mj}
m_j=a+b+(l/2-1-j/2) +1 = a+b+ (l-j)/2
\end{equation}
(it can be integer or half-integer), so that in particular $l/2+j/2+m_j = a+b+l$ for every $j$.

We choose now $N = 2(a+b) + l$ and obtain
$$
|I| 
 \le C_\psi h^{-(l+1)/2} \min( h, |h-t|) ^{-a -b- (l-1)/2}.
$$

The constants involve \eqref{psi-r-integrals} with $k\le a+b+ \lceil (l-j)/2 \rceil$ and $s=n-l+\lceil (l-j)/2-1 \rceil + (l+j-1) = n + \lceil (l+j)/2 \rceil -2$, so that finally we have $k\le a+b+ \lceil (l-1)/2 \rceil$ and $s=n + \lceil (l+N)/2 \rceil -2 = a+b+l +n-2$. In the last term, we need $k=0$ and $s=l/2+N/2+n-1 = a+b+l+n-1$.
Moreover, the constants are multiples of $C_{m',\sigma}$ with $m'=l+N = 2(a+b+l)$.

We get the same bounds for the integral with $I_1(h,r)$ (with $h+t> |h-t|$), and as a consequence,
\begin{equation}\label{lemma-R1}
\Big| \int_0^\infty e^{itr} \psi(r)\, r^{l-1} R^{(1)}(h,r) dr \Big|
\le C_\psi h^{-(l+1)/2} \min( h, |h-t|) ^{-a -b- (l-1)/2}.
\end{equation}

\textbf{Step 4.} We consider now $R^{(2)}(h,r)$ where we can choose any $M$. 
We have
\begin{align}
\begin{split}
I^{(2)} &= \int_0^\infty e^{itr} \psi(r)\, r^{l-1} R^{(2)}(h,r)\,dr \\
 &= - \frac{i^M}{h^M} \int_1^2 \int_0^\infty
    \psi(r)\, r^{l-1-M}\Big[(-1)^M e^{ ir( t+h-hv) } \overline{q_1^{(M)}(v)} \\
 &\hphantom{= - \frac{i^M}{h^M} \int_1^2 \int_0^\infty \psi(r)\, r^{l-1-M} \Big[}
    + e^{ ir(t-h+hv)} \, \tilde q_1^{(M)}(v)
    \Big] \, dr \, dv .
\end{split}
\end{align}
By \eqref{q1-k-th-derivative-small-r}, both $\overline{q_1}^{(M)}(v)$ and $\tilde q_1^{(M)}(v)$ are of order $r^{a}$ for small $r$, and by assumption, $\psi(r)$ is of order at most $r^b$, thus we can take $M = l+a+b-1$
and still get a function integrable in $r$ (around zero). For $r\ge1$, by \eqref{q1-k-th-derivative-final},
$$
r^{l-1-M} \Big( |\overline{q_1^{(M)}(v)}| + |\tilde q_1^{(M)}(v)| \Big) \le
 (1+r)^{l-1-M + (n-l+M)} = (1+r)^{n-1},
$$
so that the integral is convergent if we have $s=n-1$ and $k=0$ in \eqref{lem-powers-of-psi-and-r}.

For $v\in [1,2]$, denote
$$
f_0(v) = \int_0^\infty \psi(r)\, r^{-a-b}
        \Big[ (-1)^M e^{ ir( t+h-hv) } \overline{q_1^{(M)}(v)} + e^{ ir(t-h+hv)} \, \tilde q_1^{(M)}(v) \Big] dr
$$
(replacing $l-1-M$ by $-a-b$, with $M$ chosen as above). We pass to the variable $u=v-1$ to obtain
\begin{align*}
\int_1^2 f_0(v) dv &= \int_0^1 \int_0^\infty \psi(r)\, r^{-a-b} \Big[ (-1)^M e^{ ir( t-hu) } \overline{q_1^{(M)}(1+u)} 
        \\&\hphantom{= \int_0^1 \int_0^\infty \psi(r)\, r^{-a-b}\Big[}
        + e^{ ir(t+hu)} \, \tilde q_1^{(M)}(1+u) \Big] dr du
        \\ &= \int_0^1 \int_0^\infty e^{ ir(t+hu)} \, \psi(r)\, r^{-a-b} \tilde q_1^{(M)}(1+u) dr du
        \\&\quad + \int_{-1}^0 \int_0^\infty e^{ ir( t+hu) } \psi(r)\, r^{-a-b}
        (-1)^M \overline{q_1^{(M)}(1-u)} dr du.
\end{align*}
Denote
$$
f_1(r,u) = \begin{cases} r^{-a}\, \tilde q_1^{(M)}(1+u), & u\in [0,1];
\\ r^{-a} \,(-1)^M \overline{q_1^{(M)}(1-u)}, & u\in [-1,0).
\end{cases}
$$

Recall that $q_1$, $\tilde q_1$ are defined and smooth (in $v$) on $[1/2,3/2]$, and on this segment $\tilde q_1^{(M)} (v) = (-1)^M \overline{q_1^{(M)} (2-v)}$, by \eqref{q1-tilde-q1}. This means that for $u\in [-1/2,1/2]$, the two formulas of $f_1$ are equal, and as a whole, with \eqref{q1-k-th-derivative-small-r}, $f_1$ is smooth on $[-1,1]$.

We can also write $\psi(r) r^{-b} = \psi_1(r)$ so that $\psi_1$ is continuously differentiable $p-b\ge2$ times on $[0,+\infty)$: by 
Taylor's formula, 
$$
\psi(r) 
 = r^b \int_0^1 \psi^{(b)}(rz) \frac{ (1-z)^{b-1} } {(b-1)!} dz \equiv r^b \psi_1(r).
$$

Denote finally $f_2(r,u) = \psi_1(r) f_1(r,u)$, thus we are estimating the integral
$$
    I^{(2)}_0 = - \frac{h^M}{i^M} \, I^{(2)}  = \int_{-1}^1 \int_0^\infty e^{ ir(t+hu)} \, f_2(r,u) dr du.
$$
By \eqref{qkth-derivative-in-r}, $f_1$, $\dfrac {\partial }{\partial r}  f_1$, $\dfrac {\partial^2 }{\partial r^2}  f_1$ are bounded for $r\ge1$ by $C\, r^{-a + n-l+M}
  = C\, r^{n+b-1}$, so that 
\begin{align*}
    | f_2(r,u) | &\le C\, r^{n+b-1} | \psi_1(r) | = C\, r^{n-1} | \psi(r) |,
    \\ \Big| \frac {\partial }{\partial r}  f_2(r,u) \Big| 
    &\le C\, r^{n+b-1} ( |\psi_1'(r) | + | \psi_1(r) | )
    \le C r^{n-1} \big( |\psi'(r)|+ |\psi(r)|\big),
    \\\Big| \frac {\partial^2 }{\partial r^2}  f_2(r,u) \Big| &\le
    C r^{n-1} \big( |\psi(r)|+ |\psi'(r)|+|\psi''(r)|\big).
\end{align*}
All three $f_2$, $\dfrac {\partial }{\partial r}  f_2$, and $\dfrac {\partial^2 }{\partial r^2}  f_2$ are (continuous and) integrable on $[0,+\infty)$ if $\psi$ satisfies \eqref{psi-r-integrals} with $s=n-1$ and $0\le k\le 2$.

Set now $J = [-\frac th - \frac 1h, -\frac th + \frac 1h]$.
We split the integral 
as
$I^{(2)}_0 = I^{(2)}_1 + I^{(2)}_2$ where
\begin{align*}
I^{(2)}_1 &=\int_{J\cap [-1,1]} \int_0^\infty e^{ ir(t+hu)} \, f_2(r,u) dr du,
\end{align*}
so that $|I^{(2)}_1| \le C_\psi / h$. In $I^{(2)}_2$, we integrate by parts:
\begin{align*}
I^{(2)}_2 
&= \int_{v\in [-1+\frac th,1+\frac th] \atop |v|>1/h} \int_0^\infty e^{ irhv} \,  f_2(r,v-t/h)  dr dv
\\&= \int_{v\in [-1+\frac th,1+\frac th] \atop |v|>1/h} \frac {i}{vh}
 \Big[ f_2(0,v-t/h) + \int_0^\infty e^{ irhv} \, \frac {\partial }{\partial r}  f_2(r,v-t/h)  dr \Big] dv.
\end{align*}
We have either $0< h\le t-1$, so that $-1+t/h\ge 1/h$, or $h\ge t+1$, in which case $-1+t/h\le -1/h$. The integral is bounded in both cases as,
$$
|I^{(2)}_2| \le C_\psi \frac1h \int_{1/h}^{1+t/h} \frac {dv} v = C_\psi \frac1h \big( \log(1+t/h) + \log h \big) \le C_\psi \frac{\log t + \log h}h.
$$
Here, the last inequality follows from the estimate
\begin{align*}
	\log(t+h) \le \log(2 \max(t,h)) &\le \log 2 + \log \max(t,h)
	\\&\le \log 2 + \log h + \log t \le 2(\log h + \log t),
\end{align*}
which holds whenever $t \ge 2$ or $h \ge 2$. This condition is ensured by the assumptions $t \ge 1$, $h \ge 1$, and $|h - t| \ge 1$.
Then the estimate of $|I^{(2)}_2|$ yielding
\begin{equation}\label{lemma-R2}
|I^{(2)}| \le C_\psi h^{-M-1} (\log t + \log h) = C_\psi h^{-a-b-l} (\log t + \log h).
\end{equation}

For $h\ge 2t$ the logarithmic factor can be removed: we integrate by parts once more and obtain
\begin{align*} 
I^{(2)}_2 = \int_{v\in [-1+\frac th,1+\frac th] \atop |v|>1/h} 
 \Big[& \frac {i}{vh} f_2(0,v-t/h) - \frac {1}{v^2h^2} \frac {\partial }{\partial r}  f_2(r,v-t/h) \Big |_{r=0} 
 \\& - \frac {1}{v^2h^2} \int_0^\infty e^{ irhv} \, \frac {\partial^2 }{\partial r^2}  f_2(r,v-t/h)  dr \Big] dv.
\end{align*}
In the first term,
the function $f_3(v) = f_2(0,v-t/h)$ is continuously differentiable in $v$, and we can write $f_3(v) = f_3(0) + v f_3'(s_v)$ with $s_{v}\in [0,v]$. This implies that
\begin{align*}
\Big| \int_{v\in [-1+\frac th,1+\frac th] \atop |v|>1/h} 
 \frac {i}{vh} f_3(v) dv \Big|
  &\le \Big| f_3(0) \int_{v\in [-1+\frac th,1+\frac th] \atop |v|>1/h} \frac {1}{vh} dv \Big|
 + \frac2h \sup_{s_v\in [0,v]} |f_3'(s_v)|
 \\ &\le \frac {C |\psi_1(0)|} h \Big[ \log\frac{|1 + \frac th|}{|-1+\frac th|} + 1 \Big]
 \le C|\psi^{(b)}(0)| \frac 1h,
 \end{align*}
since $|t/h|\le 1/2$.

Next, note that
$$
\int_{v\in [-1+\frac th,1+\frac th] \atop |v|>1/h} \frac {1}{v^2h^2} dv \le 2 \int_{1/h}^\infty \frac {1}{v^2h^2} dv =
  \frac 2h,
$$
so that the two remaining terms in $I^{(2)}_2$ are bounded by $C_\psi / h$. Altogether, we get for $h\ge 2t$
$$
|I^{(2)}| \le C_\psi h^{-M-1} = C_\psi h^{-a-b-l}.
$$

Collecting \eqref{lemma-main}, \eqref{lemma-R0}, \eqref{lemma-R1}, and \eqref{lemma-R2},
we conclude that the integral \eqref{I_t-lem1-no-gap} is bounded by
$$
C_{\psi}\, h^{(1-l)/2} \min( h, |h-t|)^{-a-b-(l+1)/2} \log t  
$$
for $h<2t$ in the considered range, and by
$$
C_{\psi} h^{-(a+b+l)}
$$
if $h\ge 2t$.
As mentioned before, the constant is a multiplier of $C_{m,\sigma}$ with the largest $m$ involved in the proof, that is, $m=2(a+b+l)$ (in the estimate of $R^{(1)}$).
This proves the lemma.
\end{proof}

\begin{lemma}\label{derivative wrt r}
    Let $M>0$ be a constant and $\kappa$ a smooth function on $\R^l$, satisfying
    \begin{equation}\label{condition on kappa}
    \abso{\partial^k \kappa(\l)}\le M \bracket{1+\abso{\l}}^{p}, \qquad \l\in\R^l,
    \end{equation}
    for any multi-index $k\in \Z_+^l$, and there exists an integer $a\ge0$ such that 
    $$
    \partial^m\kappa(0) = 0
    $$
    for any multi-index $m$ with $|m|< a$.
    Then for any real $r>0$, the function 
    \[f(r):=\int_{\abso{\l}=r} \kappa(\l) d\l \]
    has the following estimate with every $m\ge0$: 
    \[
    | f^{(m)}(r)|
    \le C_{l,m} M \bracket{1+r}^{l-1+p}. 
    \]
    Moreover, $f^{(m)}(0)=0$ if $m<a+l-1$.
\end{lemma}
\begin{proof}
    In polar coordinates \eqref{polar-nots}, we rewrite $f(r)$ as
    \[
    f(r) = 
    \int_{Sk ^{l-1}}  \kappa( r\Theta ) r^{l-1} d\Theta,
    \]
    where $S^{l-1}$ denotes the unit sphere in $\R^l$. We can differentiate under the integral sign.
    First, for every $m$
    \begin{equation}\label{diff-kappa-m}
    \frac{d^{m}}{d r^{m}} \kappa( r\Theta ) =
    \sum_{k\in \Z_+^l,\ |k|=m}  \partial ^{k}\kappa(r\Theta) P_k(\theta_1,\dots,\theta_{l-1}),
    \end{equation}
    where $P_k$ are trigonometric polynomials not involving $r$. It follows that
    $$
    \Big| \dfrac{d^{m}}{d r^{m}} \kappa( r\Theta ) \Big| \le C_m M (1+r)^p
    $$
    for $\Theta\in S^{l-1}$.
    Differentiating $f(r)$ yields
    \begin{align}\label{diff-f-m-lemma}
        f^{(m)}(r) 
        & = \int_{S^{l-1}} \frac{d^m}{d r^m} \Bracket{\kappa( r\Theta ) r^{l-1}} d\Theta
        \notag
        \\& = \int_{S^{l-1}} \sum_{0\le k\le \min(m,l-1)} \binom m{k} \frac{(l-1)!}{(l-1-k)!}\,
        r^{l-1-k} \frac{d^{m-k}}{d r^{m-k}} \kappa( r\Theta )  d\Theta,
    \end{align}
    which now implies
    \[
    | f^{(m)}(r)|  
    \le C_{l,m}\, M (1+r)^{l-1+p}.
    \]
When $r=0$, only the term with $k=l-1$ may be nonzero in \eqref{diff-f-m-lemma}. If $m-k=m-l+1<a$, it vanishes as well, as it may be seen from \eqref{diff-kappa-m}. Thus, $f^{(m)}(0)=0$ if $m<a+l-1$.
\end{proof}

\begin{corollary}\label{corollary1 of derivative wrt r}
    Assume that $\xi_H(r):= \int_{\abso{\l}=r} \varphi_\l(\e^{H})\abso{\HCc(\l)}^{-2} d\l$ for any $H\in \Ca$ and $\l\in \mfa$, then for any $m$
    \[
    \abso{\xi_H^{(m)}(r)} \le C_{l,m}\, (1+\|H\|)^{m}\, \phi_0(\e^H)  (1+r)^{n-1}.
    \]
    Moreover, $\xi_H$ has a zero of order $\nu-1$ at $r=0$.
\end{corollary}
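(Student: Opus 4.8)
The plan is to obtain both estimates from Lemma~\ref{derivative wrt r}, applied to
\[
\kappa(\l):=\varphi_\l(e^H)\,|\HCc(\l)|^{-2},\qquad \l\in\mfa^*\cong\R^l,
\]
with $s=0$, for which the function $f$ produced by that lemma is exactly $\xi_H$. For the first estimate only the growth condition \eqref{condition on kappa} on the partial derivatives of $\kappa$ is needed; the factorization hypothesis $\kappa=|\l|^a\kappa_1$ enters in Lemma~\ref{derivative wrt r} only through its $r\le1$ conclusion, which I will replace by a direct computation.

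To verify \eqref{condition on kappa}: the spherical function $\varphi_\l(e^H)$ is smooth in $\l$ (differentiate under the integral sign in its definition), and Proposition~\ref{estimate of phi_lambda} gives $|\partial^k\varphi_\l(e^H)|\le C_k(1+\|H\|)^{|k|}\phi_0(e^H)$ for every multi-index $k$. Writing $|\HCc(\l)|^{-2}=\HCc(\l)^{-1}\,\overline{\HCc(\l)^{-1}}$ and combining Theorem~\ref{estimate of c function} with the Leibniz rule gives $|\partial^j|\HCc(\l)|^{-2}|\le C_j(1+|\l|)^{\,n-l}$. One further application of the Leibniz rule yields
\[
|\partial^k\kappa(\l)|\le C_k(1+\|H\|)^{|k|}\phi_0(e^H)(1+|\l|)^{\,n-l},\qquad \l\in\mfa^*,
\]
for each $k$. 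Since the estimate of the $m$-th derivative of $f$ in Lemma~\ref{derivative wrt r} involves only $\partial^k\kappa$ with $|k|\le m$, we may apply the lemma with $M=C_m(1+\|H\|)^m\phi_0(e^H)$, $p=n-l$ and $s=0$, obtaining
\[
|\xi_H^{(m)}(r)|\le C_{l,m}(1+\|H\|)^m\phi_0(e^H)(1+r)^{\,l-1+(n-l)}=C_{l,m}(1+\|H\|)^m\phi_0(e^H)(1+r)^{\,n-1},
\]
which is the first estimate of the corollary.

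For the bound at $r\le1$ I would argue directly. In polar coordinates $\xi_H(r)=r^{l-1}\int_{S^{l-1}}\varphi_{r\Theta}(e^H)\,|\HCc(r\Theta)|^{-2}\,d\Theta$, and for $r\le1$ we have $|r\Theta|\le1$, so the $|\l|\le1$ case of \eqref{estiofHCc} gives $|\HCc(r\Theta)|^{-2}\le C\,r^{\,\nu-l}=C\,r^{2d}$, while Proposition~\ref{estimate of phi_lambda} gives $|\varphi_{r\Theta}(e^H)|\le\phi_0(e^H)$. Integrating over the unit sphere,
\[
|\xi_H(r)|\le C_l\,\phi_0(e^H)\,r^{\,l-1+2d}=C_l\,\phi_0(e^H)\,r^{\,\nu-1},\qquad r\le1,
\]
since $\nu=2d+l$, and the claimed bound follows because $(1+\|H\|)^m\ge1$.

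The step deserving care is this last one, which is also where the essential input sits: the first estimate rests only on the uniform polynomial bound $(1+|\l|)^{n-l}$ for $|\HCc(\l)|^{-2}$, whereas the $r\le1$ estimate needs the sharper vanishing $|\HCc(\l)|^{-2}=O(|\l|^{\nu-l})=O(|\l|^{2d})$ near the origin. One would like to read the $r\le1$ bound off the ``moreover'' part of Lemma~\ref{derivative wrt r} with $a=2d$, but that part is stated under the hypothesis $\kappa=|\l|^{a}\kappa_1$ with $\kappa_1$ smooth, which for $\kappa=\varphi_\l(e^H)|\HCc(\l)|^{-2}$ is not literally true when $l\ge2$; by the Gindikin-Karpelevich formula (Remark after Assumptions~\ref{assumption for sigma}) one only gets that $\kappa$ vanishes to order $2d$ at the origin, i.e.\ $\kappa(r\Theta)=r^{2d}\rho(r,\Theta)$ with $\rho$ smooth in $r$ and bounds uniform in $\Theta$. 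This weaker property still suffices for the argument behind the ``moreover'' part, so the conclusion holds; but since the $r\le1$ bound is a one-line surface-integral estimate in any case, it is cleanest to argue directly as above. The remainder is routine bookkeeping of constants, with $s=0$ kept fixed throughout.
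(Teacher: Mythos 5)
Your proof is correct and, for the main estimate, is exactly the paper's argument: apply Lemma~\ref{derivative wrt r} with $s=0$, $p=n-l$ and $M=C_m(1+\|H\|)^m\phi_0(e^H)$, the hypothesis \eqref{condition on kappa} being checked from Proposition~\ref{estimate of phi_lambda} together with Theorem~\ref{estimate of c function} (the paper cites Proposition~\ref{cF&c}, which is the same Leibniz argument). For the $r\le1$ bound the paper instead invokes the ``moreover'' clause of Lemma~\ref{derivative wrt r} with $a=\nu-l=2d$, saying only that $|\HCc(\l)|^{-2}$, hence $\kappa$, has a zero of order $2d$ at the origin; your direct polar-coordinate estimate using \eqref{estiofHCc} and $|\varphi_{r\Theta}(e^H)|\le\phi_0(e^H)$ reaches the same conclusion in one line and even drops the superfluous factor $(1+\|H\|)^m$. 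Your side remark is well taken: $\varphi_\l(e^H)|\HCc(\l)|^{-2}$ is not literally of the form $|\l|^{2d}\kappa_1(\l)$ with $\kappa_1$ smooth (by Gindikin--Karpelevich it is $\prod_{\a\in\Sigma_r^+}\ip{\l,\a}^2$ times a smooth factor), but vanishing to order $2d$ is all the lemma's proof actually uses, so the paper's application is harmless and your variant simply sidesteps the issue.
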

\begin{proof}
Set $\kappa(\l) = \varphi_\l(\e^{H})\abso{\HCc(\l)}^{-2}$. From \Cref{estimate of phi_lambda} and \Cref{cF&c}, we get \eqref{condition on kappa} with $p=n-l$ and $M = C_{m,l} \, (1+\|H\|)^{m}\, \phi_0(\e^H)$.
Moreover, $\abso{\HCc(\l)}^{-2}$ has a zero of order $\nu-l = 2d$ at $\l=0$ by \eqref{estiofHCc}, so the same holds for $\kappa$. \Cref{derivative wrt r} gives the result now.
\end{proof}
\begin{remark}
    A similar result for the density $\xi_H$ itself is given in \cite[\S 2, p.117]{CGM1}. We need however to estimate also all of its derivatives.
\end{remark}

\subsection{Proof of \Cref{eit-psi-LB-no-gap}}
\textbf{\\The general estimate.}
For $H\in \Ca$,
\[
 k_t(\e^H)=C \int_0^\infty e^{itr} \psi(r) \int_{|\l|=r} \varphi_\lambda(e^H) |\HCc (\lambda)|^{-2} d\lambda dr
=C\int_0^\infty e^{itr} \psi(r) \xi_H(r) dr
\]
with $\xi_H$ the same as in \Cref{corollary1 of derivative wrt r}. By this corollary, $\xi_H$ has a zero of order $\nu-1$ at $r=0$. Similarly to \eqref{I-integration by parts}, we can integrate by parts $m$ times as soon as $m\le \min(p,\nu)$, and obtain
\begin{align*}
| k_t(\e^H)| &\le |t|^{-m} 
\Big[ |( \psi \xi_H )^{(m-1)}(0)| + \int_0^\infty | ( \psi\xi_H )^{(m)}(r)| dr \Big]
\\& \le C_{l,m,\psi} \, |t|^{-m} (1+\|H\|)^{m} \, \phi_0(\e^H),
\end{align*}
the constant majorizing $|\psi^{(k)}(0)|$ with $0\le k\le m-1$ and integrals \eqref{psi-r-integrals} with $s=n-1$ and $0\le k\le m$. This proves (1) of the theorem and Remark \ref{case1,m<=nu}.

\noindent\textbf{Estimates for $\|H\| \le 3 |t|$.}
It is known \cite[Lemma 2.1.6(ii)]{AnkerJi} that there exists a constant
 $\zeta>0$ such that for every $H\in\mfa$,
\begin{equation}\label{zeta}
\max_{\a\in \Sigma_s^+} |\a(H)| > \zeta \|H\|.
\end{equation}
Set
$$
F = \{ \a\in \Sigma_s^+: \a(H) \le N \log |t| \}
$$
with $N=l/2$ (it needs not be an integer). We suppose that $\log |t|\ge1 \ge 1/N$, so that in particular $1+\log |t|\le 2\log |t|$.
If $F=\Sigma_s^+$, we have $\|H\| \le \zeta^{-1} N \log |t|$, and by the general estimate above,
\begin{align*}
|k_t(e^H)| &\le C_{l,m,\psi}\, |t|^{-m} (\zeta^{-1} N \log |t|)^m \varphi_0(\e^H) 
\\&= C_{l,m,\psi}\, |t|^{-m} (\log |t|)^m (1+\|H\|)^d \e^{-\rho(H)} 
\end{align*}
for $m\le \min(p,\nu)$. To apply it, we need \eqref{psi-r-integrals} with $s=n-1$. To put this into accordance with the estimate in (2), we choose $m=p=\lfloor l/2 \rfloor <\nu$, then
$$
|t|^{-m} (\log |t|)^{m+d}\e^{-\rho(H)} \le |t|^{-l/2} (\log |t|)^{l/2+1+ d} \e^{-\rho(H)} \le C_l\, |t|^{(1-l)/2} \e^{-\rho(H)}.
$$

Otherwise, $F\ne\Sigma_s^+$. We can decompose $H = H_0 + H_1$ so that $\a(H) = \a(H_0)$ for $\a\notin F$, and $\a(H) = \a(H_1)$ for $\a\in F$.
In detail, we could choose a basis $(H_i)$ of $\mfa$ such that $(H_i)$ is dual of $\Sigma_s^+$. Then we can define $H_1=\sum_{\alpha\in F}c_\alpha H_\alpha$ and $H_0=H-H_1$. Note also that $\|H_1\| \le \zeta^{-1} N \log |t|$.

Consider the parabolic subgroup associated to $F$ and its reductive component $M_{1F}$. We have in particular that $H_0$ is in the center of $M_{1F}$ (equivalently speaking, $M_{1F}$ is the centralizer of $H$), so that $\rho_F(H)=\rho_F(H_1)$.
Also,
$$
\tau_F(H) = \min_{\alpha\in \Sigma^+\setminus \Sigma_F^+}|\alpha(H)| > N \log |t| \ge 1.
$$
We can apply now \Cref{thm593a}:
There is a constant $C_F>0$ such that for all $\lambda\in \mfa^*$ and $a\in \CA$ with $\tau_F(\log a)\ge 1$
\begin{align*}
    \Big| \e^{\rho(H)} \varphi_\lambda(e^H) - |\!\mfw_{F}\!|^{-1} \, \e^{\rho_F(H)} \sum_{s\in \mfw}\frac{\HCc(s\lambda)}{\HCc_F(s\lambda)} \theta_{s\lambda}(e^H) \Big|
    &\le C_F \bracket{1+\abso{\lambda}}^{6(n-l)+1} \e^{-\tau_F(H)}
    \\&\le C_F \bracket{1+\abso{\lambda}}^{6(n-l)+1} |t|^{-N}.
\end{align*}
Note at this point that $C_F$ depends on $F$ only, and there is a finite number of subsets of $\Sigma_s^+$; this means that there is a unique constant majorizing all of $C_F$, thus independent of $H$.

Let us denote now
\begin{align*}
I_t(H) &= |\!\mfw_F\!|^{-1} \, \e^{\rho_F(H)} \int_{ \mfa^*} e^{it|\l|} \psi(|\lambda|) \sum_{s\in \mfw} \frac{\HCc(s\lambda)}{\HCc_F(s\lambda)} \theta_{s\lambda}(e^H) |\HCc (\lambda)|^{-2}d\lambda;
\end{align*}
by the estimate above,
\begin{align}\label{k_t-I_t}
| k_t(e^H) - C_F \, \e^{-\rho(H)}  I_t(H)| 
&\le C_F \, e^{-\rho(H)}\, |t|^{-N} \int_{ \mfa^*} \big| \psi(|\lambda|) \big| (1+|\lambda|)^{7(n-l)+1} d\lambda 
\\& \le C_{F,\psi} \, e^{-\rho(H)} |t|^{-N}. \notag
\end{align}
To guarantee the convergence of the integral, we need \eqref{psi-r-integrals} with $k=0$ and $s=7(n-l)+1$.

Our next task is to estimate $I_t(H)$. Using the fact that every $s\in \mfw$ is orthogonal and $\abso{\HCc(\cdot)}$ is $\mfw$-invariant, we obtain
 \begin{align}\label{I_t-H_0+H_1}
I_t(H) &= C_F\, \e^{\rho_F(H)} \int_{ \mfa^*} e^{it|\l|} \psi(|\lambda|) [\HCc_F\bar\HCc]^{-1}\!(\l) \, \theta_{\lambda}(e^H) d\lambda.
\end{align}
The definition of $\theta_\l$, as introduced in \Cref{definition of theta}, involves $H(e^H k)$ with $k\in K_F$. 
If $e^{H_1} k = k_1 a_1 n_1$ is the Iwasawa decomposition in $M_{1F}$, then, since $H_0$ is in the center of $M_{1F}$, we have $e^H k = e^{H_0} e^{H_1} k = k_1 e^{H_0} a_1 n_1$ and thus $H(e^H k) = H_0 + H(e^{H_1}k)$.
This implies that $\rho_F(H(e^H k)) = \rho_F(H(e^{H_1} k))$, and, simplifying notations as $H(e^{H_1}k) = (H_1)_k$, we can write $I_t$ in the following form:
\begin{align*}
    I_t(H) & = C_F\, \e^{\rho_F(H)} \int_{K_F}\int_{ \mfa^*} e^{it|\l|} \psi(|\lambda|)[\HCc_F\bar\HCc]^{-1}\!(\l) \, \e^{(i\lambda-\rho_F)(H(e^H k))} d\lambda dk
    \\ & = C_F\, \int_{K_F} \e^{\rho_F(H_1-(H_1)_k)} \int_{ \mfa^*} e^{it|\l|} \psi(|\lambda|) [\HCc_F\bar\HCc]^{-1}\!(\l) \, e^{i\lambda((H_1)_k + H_0)} d\lambda dk.
\end{align*}
For every $k$, the integral in $\l$ has the form \eqref{I_t-lem1-no-gap} with
$$
\sigma(\l) = [\HCc_F\bar\HCc]^{-1}\!(\l),
$$
its derivatives bounded by $C_m (1+|\l|)^{n-l}$ by \Cref{cF&c}. 
Moreover,
$$
h_k:=\|(H_1)_k + H_0\| \ge \max_{ \a\in \Sigma_s^+\setminus F} \frac{ \a(H_0+ (H_1)_k) }{\|\a\|} \ge C_F N \log |t|,
$$
and this is greater than 1 for $|t|$ large enough. At the same time, applying the property of function $H$ \cite[page~218, Lemma 1.14]{Helgason_Geometric_analysis}, $h_k =\norm{H(\e^H k)}\le \norm {H}\le 3|t|$.

By \Cref{stphase-oscillating-no-gap}, the integral in $\lambda$ is bounded by
$C_{\psi, m} |t|^{-m} h_k^{m+(1-l)/2}$; in the range $h_k\le 3|t|$, the choice of $m$ changes only constants and not the final degree in $t$, thus we can take simply $m=0$. Condition \eqref{psi-r-integrals} appears with $s =  n-1 + l/2$, since we can take $p=0$ in the statement of \Cref{stphase-oscillating-no-gap}.

Integration over $K_F$ turns the bound into
$$
|I_t(H)|
\le C_\psi \, |t|^{(1-l)/2} \int_{K_F} \e^{\rho_F(H_1-(H_1)_k)} dk
 = C_\psi \, |t|^{(1-l)/2} \e^{\rho_F(H_1)} \theta_0(e^{H_1}).
$$
By definition, $\theta_0$ is the spherical function at $\l=0$ of the reductive group $M_{1F}$, and it is estimated similarly to $\phi_0$, see \Cref{estimate of phi_lambda}: $\theta_0(e^{H_1}) \le \e^{-\rho_F(H_1)}(1+\|H_1\|)^{d_F}$, which yields
$$
|I_t(H)| \le C_\psi\, |t|^{(1-l)/2} (1+\|H_1\|)^{d_F}
\le C_{G,\psi}\, |t|^{(1-l)/2} (\log |t|)^d,
$$
and
\begin{align*}
| k_t(e^H) | &\le C_{G,\psi}\, e^{-\rho(H)} \big( |t|^{(1-l)/2} (\log |t|)^d + |t|^{-l/2} \big)
\\& \le C'_{G,\psi}\, e^{-\rho(H)} |t|^{(1-l)/2} (\log |t|)^d.
\end{align*}
This proves (2).

\noindent\textbf{Estimates for $\|H\|$ large and away from $|t|$.}
Let $\zeta>0$ be the same as in \eqref{zeta}; we can always assume $0<\zeta<1$.
Set this time $N = l+d+b+1$.
We consider $H$ with $h=\|H\|>\log |t|$ and $\big|h-|t| \big| \ge \delta \log |t|$ with $\delta = 2N / \zeta$.

We suppose $|t|>3 \log 3$ large enough for two inequalities: first, $\delta \log |t|\ge1$, and second, if $h>\log |t|$, then
\begin{equation}\label{h-is-good-for-lemma}
\frac 23 h > 1+ \delta \log h.
\end{equation}
Set
$$
F = \{ \a\in \Sigma_s^+ : \a(H) \le N \log h \}.
$$
We suppose $|t|$ large enough to assure $F\ne \Sigma_s^+$, and in particular, we have $|t|\ge \e$.

We decompose $H = H_0 + H_1$ in the same way as in the case $\|H\|\le 3|t|$, obtaining
\begin{equation}\label{h1-in-th4.1}
h_1 := \|H_1\| \le \zeta^{-1} N \log h = \frac\delta2 \log h
\end{equation}
and
$$
\tau_F(H)  = \min_{\alpha\in \Sigma^+\setminus \Sigma_F^+}|\alpha(H)| > N \log h \ge \log \log t > 1
$$
for $|t|$ is large enough.
With the same $I_t(H)$ \eqref{I_t-H_0+H_1} we have: first,
\begin{align}\label{kt-It<h^{-N}}
| k_t(e^H) -  C_G \, \e^{-\rho(H)}  I_t(H)| 
 \le C_{F,\psi} \, e^{-\rho(H)} \e^{-\tau_F(H)}
 \le C_{F,\psi} \, e^{-\rho(H)} h^{-N}.
\end{align}
Next, written as
$$
I_t(H)  = \e^{\rho_F(H_1)} \int_{ \mfa^*} e^{it|\l|} \psi(|\lambda|) [\HCc_F\bar\HCc]^{-1}\!(\l) \, \theta_{\lambda}(e^{H_1}) e^{i\l(H_0)} d\lambda,
$$
$I_t(H)$ has form \eqref{I_t-lem1-no-gap} with $\sigma(\l) = \e^{\rho_F(H_1)} [\HCc_F\bar\HCc]^{-1}\!(\l) \, \theta_{\lambda}(e^{H_1})$.
Identically to \Cref{estimate of phi_lambda}, every derivative of $\theta_\l$ is bounded by
\[
    \Big|\frac{\partial^m}{\partial \l^m} \theta_\l(e^{H_1}) \Big|
    \le C_m (1+h_1)^{|m|+d_F} \, \e^{-\rho_F(H_1)},
\]
and it follows that
\[
    \Big|\partial^m \sigma(\l) \Big|
    \le C_m (1+|\l|)^{n-l} (1+h_1)^{|m|+d_F}.
\]
Note that $d_F<d$ since $F\ne \Sigma_s^+$. By \eqref{h-is-good-for-lemma}, we have
\begin{equation}\label{h0-below}
h_0 := \| H_0\| \ge \|H\| - \|H_1\| \ge h - \frac\delta2 \log h >\frac32+ \delta \log h >1.
\end{equation}
To simplify the notations, denote $\tau=|t|$ till the end of the proof. If $h\ge 3\tau$, then $|h-\tau|\ge \frac23h$, and with \eqref{h-is-good-for-lemma}
$$
h_1 \le \frac \delta2 \log h < \frac13 h \le \frac12 (h-\tau). 
$$
%
Otherwise $h_1 \le \frac \delta 2 \log(3\tau) \le \frac23 \delta \log \tau \le \frac 23 |h-\tau|$, and therefore in both cases
\begin{equation}\label{h0-t}
h_1 \le \frac 23 |h-\tau|, \qquad  |h_0-\tau| \ge |h-\tau| - h_1 \ge \frac 13 |h-\tau|\ge \frac 13 \delta \log \tau.
\end{equation}
When $\tau$ is as large as assumed, the latter is in particular greater than 1.

\Cref{big-h-no-gap} now applies: The function $\sigma$ has a zero of order $a=d+d_F$ at $\l=0$, and $p=2d+\lfloor l/2\rfloor \ge d+d_F+1+\lfloor l/2 \rfloor$.
Thus, by \eqref{eq-big-h-no-gap}, we have for $h\le 3\tau$ (so that in particular $\log h_0 \le C \log \tau$)
\begin{align*}
|I_t(H)| &\le C_{\psi} C_{2(a+b+l),\sigma} \,h_0^{(1-l)/2} \min(h_0, |h_0-\tau|)^{-a-b-(l+1)/2} \log \tau
\\ & \le C_{\psi} \,(1+h_1)^{2(d+d_F+b+l)+d_F}\, h_0^{(1-l)/2} \min(h_0,|h_0-\tau|)^{-d-d_F-b-(l+1)/2} \log \tau.
\end{align*}
We estimate with \eqref{h-is-good-for-lemma} that $h_0\ge h-\frac{\delta}{2}\log h > h- \frac{1}{3}h =\frac{2}{3}h$. Thus with \eqref{h0-t} we have first
\[
\min(h_0, |h_0-\tau|)\succsim \min (h,|h-\tau|).
\]
It follows from $h_0 \le h+h_1\le 2h$ and \eqref{h0-below} that
\begin{align}\label{It-with-d_F}
&|I_t(H)| 
\\&\le C_{\psi} \,(1+\log h)^{2(d+d_F+b+l)} h_1^{d_F}\, h^{(1-l)/2} \min(h,|h-\tau|)^{-d-d_F-b-(l+1)/2} \log \tau \notag
\\ &\le C_{\psi} \,(1+\log h)^{2(d+d_F+b+l)}\, h^{(1-l)/2} \min(h,|h-\tau|)^{-d-b-(l+1)/2} \log \tau
\notag
\\&\le C_{\psi} \, h^{(1-l)/2} \min(h,|h-\tau|)^{-d-b-(l+1)/2} (1+\log h)^{2(2d+b+l)} \log \tau.
\notag
\end{align}
The constant involves \eqref{psi-r-integrals} with
$s = n+l/2-1 + ( d+d_F + \lfloor l/2 \rfloor+1) /2$.
However, for the estimate \eqref{k_t-I_t} we need $s = 7(n-l)+1$.

Finally, with \eqref{kt-It<h^{-N}} in which, as we recall, $N>d+b+l$, we have for $\tau=|t|$ large enough and $h\le 3\tau$:
\begin{align*}
| k_t(e^H) | 
&\le C_{G,\psi}\, e^{-\rho(H)} \big( h^{-d-b-l} 
\\& \qquad + h^{(1-l)/2} \min(h,|h-\tau|)^{-d-b-(l+1)/2}\big) (\log \tau)^{2(2d+b+l)+1}
\\&\le C_{G,\psi}\, e^{-\rho(H)} \big( h^{-d-b-l} 
\\&\qquad+ h^{(1-l)/2} (h^{-d-b-(l+1)/2}+|h-\tau|^{-d-b-(l+1)/2})\big) (\log \tau)^{2(2d+b+l)+1}
\end{align*}
which proves (4).

For $h\ge 3\tau$, we have $h_0\ge 2\tau$ and Lemma \ref{big-h-no-gap} gives a better result: 
\begin{align}\label{result-of-lemma-in-th-4.1}
|I_t(H)| &\le C_{\psi} C_{2(a+b+l),\sigma} \,h_0^{-a-b-l}
\le C_{\psi} \,(\log h)^{2(d+d_F+b+l)+d_F}\, h^{-d-d_F-b-l},
\end{align}
If $d_F\ge1$, then the logarithmic factor is smaller than $h\le h^{d_F}$ for $\tau$ large enough; but if $d_F=0$, then $H_1=0$ and this factor is absent. Thus, we have finally (3):
\begin{align*}
| k_t(e^H) | &\le C_\psi\, e^{-\rho(H)} h^{-d-b-l}.
\end{align*}
\qed

\section{$L^{p'}\to L^p$ estimates for the Laplace-Beltrami operator}

Our detailed pointwise estimates allow to deduce multiple properties, of which we present two most interesting ones: dispersive estimates for the Laplace-Beltrami operator in this section and $L^p-L^p$ estimates for the distinguished Laplacian in Section 6.

For $1\le p\le \infty$, let us denote by $p'$ its conjugate exponent: $1/p+1/p'=1$.
For right convolution operators with $K$-biinvariant functions, the following Kunze-Stein phenomenon gives better estimates than a direct $L^2-L^\infty$ interpolation:
\begin{lemma}\label{KunzeStein}
    Assume that $\kappa$ is a $K$-biinvariant measurable function on $G$. For any $p\in [2,\infty)$, there exists a constant $C_p$ such that for each $f\in L^{p'}(S)$,
    \[
    \norm{f*\kappa}_{L^p(S)} \le C_p \norm{f}_{L^{p'}(S)} \set{\int_G  \abso{\kappa(x)}^{p/2}\varphi_0(x)dx }^{2/p}.
    \]
\end{lemma}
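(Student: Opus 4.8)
The plan is to reduce the inequality to the classical Kunze--Stein phenomenon on $G$ and to pass from $G$ to the symmetric space $S=G/K$ using that $K$ is compact.

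First I would recast the operator $f\mapsto f*\kappa$ on $S$. Since $\kappa$ is $K$-biinvariant, and the kernel $k_\LB$ appearing in \eqref{integral formula of kernel} is $K$-biinvariant on $G$, the convolution operator $f\mapsto f*\kappa$ on $L^p(S,d_l)$ coincides, under the standard isometric identification of $L^p(S,d_l)$ with $L^p(G/K)$ and of the latter with the subspace of right-$K$-invariant functions of $L^p(G)$ (legitimate because $K$ is compact; normalize $\operatorname{vol}K=1$), with the restriction to right-$K$-invariant functions of right convolution by $\kappa$ on $G$; this restriction is well defined precisely because $\kappa$ is right-$K$-invariant. Consequently
\[
\|f\mapsto f*\kappa\|_{L^{p'}(S)\to L^p(S)}\ \le\ \|f\mapsto f*\kappa\|_{L^{p'}(G)\to L^p(G)},
\]
so it suffices to prove that for every $K$-biinvariant measurable $\kappa$ on $G$ and every $p\in[2,\infty)$
\[
\|f*\kappa\|_{L^p(G)}\ \le\ \|f\|_{L^{p'}(G)}\,\Big(\textstyle\int_G|\kappa(x)|^{p/2}\varphi_0(x)\,dx\Big)^{2/p}.
\]
We may assume the integral on the right is finite and, approximating $|\kappa|$ from below by bounded compactly supported functions (and using dominated convergence in that integral), reduce to $\kappa$ bounded with compact support.

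For this $G$-side estimate I would use Stein's complex interpolation between two endpoints. At $p=2$: for $K$-biinvariant $\kappa$, right convolution by $\kappa$ on $L^2(G)$ has operator norm $\sup_{\lambda\in\mfa^*}|(\mathcal H\kappa)(\lambda)|$ (only the spherical principal series contribute, each by the scalar $(\mathcal H\kappa)(\lambda)=\int_G\kappa\,\varphi_{-\lambda}$), and by $|\varphi_{-\lambda}(x)|\le\varphi_0(x)$ for real $\lambda$ (\Cref{estimate of phi_lambda}) this is at most $\int_G|\kappa|\varphi_0$. At $p=\infty$: $\|f*\kappa\|_\infty\le\|f\|_1\|\kappa\|_\infty$ by Young's inequality. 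Now set $\kappa_z(x)=|\kappa(x)|^{\frac p2(1-z)}\operatorname{sgn}\kappa(x)$, which is again $K$-biinvariant, with $\kappa_{1-2/p}=\kappa$, $|\kappa_{it}|=|\kappa|^{p/2}$, and $|\kappa_{1+it}|=\mathbf{1}_{\operatorname{supp}\kappa}$. Applying Stein interpolation to $T_z:f\mapsto f*\kappa_z$ between $\operatorname{Re}z=0$, where $\|T_{it}\|_{L^2\to L^2}\le\int_G|\kappa|^{p/2}\varphi_0$ by the $p=2$ endpoint, and $\operatorname{Re}z=1$, where $\|T_{1+it}\|_{L^1\to L^\infty}\le1$, and noting $(L^2,L^1)_{\theta}=L^{p'}$, $(L^2,L^\infty)_{\theta}=L^p$ with $1-\theta=2/p$ at $\theta=1-2/p$, yields exactly the displayed $G$-side inequality. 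Alternatively, this inequality is the classical Kunze--Stein phenomenon for $K$-biinvariant functions and may simply be quoted from the literature.

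The step I expect to require the most care is the transfer in the second paragraph: one must check that, with the chosen normalizations of the invariant measures on $S$, on $G/K$ and on $G$, the convolution in $F(\Delta_\rho)f=f*k_\LB$ truly matches right $G$-convolution restricted to right-$K$-invariant functions, so that no extra power of the modular function $\delta$ enters when $p\ne2$. For $p=2$ this is transparent from the self-adjointness of $\Delta_\rho$ on $L^2(S,d_l)$ together with its identification with the shifted Laplace--Beltrami operator on $G/K$, and the general-$p$ identification of the underlying function spaces is of the same nature; once this bookkeeping is fixed, the remainder of the argument is routine.
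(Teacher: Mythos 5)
Your proposal is correct, but it proves the lemma rather than doing what the paper does: the paper gives no argument at all and simply quotes \cite[Lemma 4.1]{AnkerZhang} and \cite[Theorem 4.2]{AnkerVittoriaVallarino}. What you have written is essentially a reconstruction of the standard proof behind those references (Cowling/Anker--Pierfelice--Vallarino): Stein complex interpolation for the family $\kappa_z=|\kappa|^{\frac p2(1-z)}\operatorname{sgn}\kappa$ between the $L^2\to L^2$ endpoint, controlled by $\sup_{\lambda\in\mfa^*}|(\mathcal H\kappa)(\lambda)|\le\int_G|\kappa|\varphi_0$ via the spherical Plancherel theorem and $|\varphi_{-\lambda}|\le\varphi_0$ (\Cref{estimate of phi_lambda}), and the trivial $L^1\to L^\infty$ endpoint from Young's inequality; the exponent arithmetic at $\theta=1-2/p$ ($\kappa_\theta=\kappa$, domain $L^{p'}$, target $L^p$, bound $(\int|\kappa|^{p/2}\varphi_0)^{2/p}$) checks out, and the passage from $S$ to $G$ is harmless since, with $\operatorname{vol}K=1$, $L^p(S)$ with left Haar measure is isometric to the right-$K$-invariant part of $L^p(G)$ (the $G$-invariant measure on $G/K$), which right convolution by the right-$K$-invariant $\kappa$ preserves, so no power of $\delta$ enters. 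Two small points worth tightening: for the $L^2(G)$ endpoint it is cleaner to write $f*\kappa=(f*\chi_K)*\kappa$ (using $\chi_K*\kappa=\kappa$) and invoke the spherical Plancherel theorem only on right-$K$-invariant functions, rather than appealing to the full tempered dual of $G$; and the reduction to bounded, compactly supported $\kappa$ is best closed by monotone convergence plus Fatou applied to $|f|*|\kappa_n|\nearrow|f|*|\kappa|\ge|f*\kappa|$. What your route buys is a self-contained proof with explicit constants' provenance; what the paper's route buys is brevity, at the cost of sending the reader to the literature.
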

\begin{proof}
    See \cite[Lemma 4.1]{AnkerZhang} and \cite[Theorem 4.2]{AnkerVittoriaVallarino}.
\end{proof}

We can apply now the results above:
\begin{theorem}[Dispersive estimate for $\Psi_t(\Delta_\rho)$]\label{dispersive-LB-no-gap}
Suppose that $\Psi_t(r^2) = e^{itr} \psi(r)$ with $\psi$ satisfying assumptions of Theorem \ref{eit-psi-LB-no-gap}. Then
    \[
    \norm{\Psi_t(\Delta_\rho)}_{L^{p'}(S)\to L^{p}(S)} \le C_{G,\psi}\, |t|^{-\nu}
    \]
    for $t\ne0$ and any $2< p< \infty$.
\end{theorem}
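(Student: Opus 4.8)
The plan is to combine the global pointwise bound of Theorem \ref{eit-psi-LB-no-gap}(1) with the Kunze--Stein estimate of Lemma \ref{KunzeStein}. Since $\Psi(\Delta_\rho)f = f * k_t$ by \eqref{kernel of functions} and $k_t = k_\LB$ is $K$-biinvariant, Lemma \ref{KunzeStein} applies with $\kappa = k_t$ and gives, for $2\le p<\infty$,
\[
\norm{\Psi(\Delta_\rho)}_{L^{p'}(S)\to L^p(S)} \le C_p \Big\{ \int_G |k_t(x)|^{p/2}\varphi_0(x)\,dx \Big\}^{2/p}.
\]
So it suffices to show $\int_G |k_t(x)|^{p/2}\varphi_0(x)\,dx \le C_{G,\psi}\,|t|^{-\nu p/2}$ for every $t\ne0$. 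Under the hypotheses of Theorem \ref{eit-psi-LB-no-gap} the function $\psi$ satisfies the assumptions of part (1), which is valid for all $x\in G$ and all $t\ne0$; inserting it,
\[
|k_t(x)|^{p/2} \le C\, |t|^{-\nu p/2}\, (1+|x^+|)^{(\nu+d)p/2}\, e^{-(p/2)\rho(x^+)}.
\]
Pulling $|t|^{-\nu p/2}$ out of the integral, the task reduces to the ($t$-independent) convergence statement
\[
\int_G (1+|x^+|)^{(\nu+d)p/2}\, e^{-(p/2)\rho(x^+)}\,\varphi_0(x)\,dx < \infty.
\]

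Next I would evaluate this in polar (Cartan) coordinates $G = K\CA K$: for a $K$-biinvariant integrand the integral equals, up to a constant, $\int_{\Ca}(1+\|H\|)^{(\nu+d)p/2}\, e^{-(p/2)\rho(H)}\,\varphi_0(e^H)\,\delta(H)\,dH$, where the density satisfies $\delta(H) = \prod_{\alpha\in\Sigma^+}(\sinh\alpha(H))^{m_\alpha} \le C\, e^{2\rho(H)}$ on $\Ca$. Using $\varphi_0(e^H)\le e^{-\rho(H)}(1+\|H\|)^d$ from Proposition \ref{estimate of phi_lambda}, the integrand is bounded by $(1+\|H\|)^{(\nu+d)p/2+d}\, e^{(1-p/2)\rho(H)}$. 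Since $p>2$ the exponent $1-p/2$ is strictly negative, and since $\rho$ lies in the open dual cone there is $c>0$ with $\rho(H)\ge c\|H\|$ on $\Ca$; hence the integrand decays like a polynomial times $e^{-c(p/2-1)\|H\|}$ and is integrable over the $l$-dimensional cone $\Ca$. This yields $\int_G |k_t(x)|^{p/2}\varphi_0(x)\,dx \le C_{G,\psi,p}\,|t|^{-\nu p/2}$, and raising to the power $2/p$ finishes the proof, with constant $C_{G,\psi}$ built from the $C_{\psi,k,s}$ of Theorem \ref{eit-psi-LB-no-gap}.

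There is essentially no serious obstacle beyond this bookkeeping; the single point to watch is that the exponential factor $e^{(1-p/2)\rho(H)}$ is integrable only when $p>2$ strictly --- which is exactly the range in the statement --- so the argument degenerates at the endpoint $p=2$ (as it must, $L^2\to L^2$ being bounded with norm $\sup|\Psi_t|\le\sup|\psi|$, not decaying). I would also emphasize that only part (1) of Theorem \ref{eit-psi-LB-no-gap} is used, and that it holds for \emph{every} $t\ne0$ rather than only large $t$, so the resulting dispersive bound is uniform over $t\ne0$; the sharper pointwise estimates (2)--(3) are not needed to obtain the rate $|t|^{-\nu}$.
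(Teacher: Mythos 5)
Your proof is correct and follows essentially the same route as the paper: Kunze--Stein (Lemma \ref{KunzeStein}) combined with the uniform-in-$t$ pointwise bound of Theorem \ref{eit-psi-LB-no-gap}(1), the Cartan integration formula with $D(H)\le Ce^{2\rho(H)}$, and the estimate $\varphi_0(e^H)\le e^{-\rho(H)}(1+\|H\|)^d$, with integrability for $p>2$ coming from the strictly negative exponent $1-p/2$ on $e^{\rho(H)}$. Your explicit remark that $\rho(H)\ge c\|H\|$ on $\Ca$ just makes precise what the paper leaves implicit.
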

\begin{proof}       
By \Cref{estimate of phi_lambda}, \eqref{integral_formula_KAK} and \Cref{eit-psi-LB-no-gap} (1)
we have then
\begin{align*}
    \begin{split}
        &\set{\int_G  \abso{k_t(x)}^{p/2}\varphi_0(x)dx }^{2/p}
        \\&\le C \set{\int_{\mfa^+}  \abso{k_t(\exp x^+)}^{p/2}\varphi_0(\exp x^+) D(x^+) dx^+ }^{2/p}
        \\&\le C |t|^{-\nu} \set{\int_{\mfa^+}  (1+|x^+|)^{p(d+\nu)/2}  e^{ -(p/2)\rho(x^+)}\varphi_0(\exp x^+) D(x^+) dx^+ }^{2/p}
        \\&\le C |t|^{-\nu} \set{\int_{\mfa^+}  (1+|x^+|)^{p(d+\nu)/2+d}  e^{ (-p/2-1+2)\rho(x^+)} dx^+ }^{2/p}
        \\&\le C |t|^{-\nu},
    \end{split}
\end{align*}
provided $p/2-1>0$, i.e., $p>2$.
Applying \Cref{KunzeStein}, we obtain the $L^{p'}\to L^{p}$ estimate of $\Psi_t(\Delta_\rho)$ for any $2< p< \infty$.
\end{proof}

This theorem, as well as Theorem \ref{Lp-Lq-LB} below, applies, in particular, to the function $\psi(x) = (1+x^2)^{-\a/2}$ with $\Re\a>n$; the decay conditions are imposed only by (1) of Theorem \ref{eit-psi-LB-no-gap}.

This should be compared with the result \eqref{kt-AZ dispersive} of Anker and Zhang \cite{AnkerZhang} cited in the introduction. As $|\Delta| = \Delta_\rho + |\rho|^2$, their function is $\Psi(\Delta_\rho)$ with $\Psi(x) = (x+|\rho|^2) ^{-\a/2} \exp( it\sqrt{x+|\rho|^2} )$. Moreover, the result still holds for $\Psi_\varepsilon(x) = (x+\varepsilon) ^{-\a/2} \exp( it\sqrt{x+\varepsilon} )$ with any $\varepsilon>0$, as mentioned in Section~6 of \cite{AnkerZhang}. In \cite{AnkerZhang}, the emphasis is on the case $\Re \a=\dfrac{n+1}2$ when the integrals \eqref{psi-r-integrals} do not converge. However, even the stronger requirement $\Re\a>n+1$ does not give a decay faster than $t^{-\nu/2}$ in \eqref{kt-AZ dispersive}, as $t\to+\infty$; the authors conjecture that this exponent is optimal. 

Our theorems do not apply directly to this case, but with minimal modifications the proofs can be adapted to handle it (with $\a$ large enough), and yield then the same power $t^{-\nu/2}$. This is another argument to suppose optimality of the result of Anker and Zhang, and this suggests also that we observe a jump in behaviour, from $t^{-\nu/2}$ to $t^{-\nu}$, between the case $\varepsilon>0$ of spectral gap $\varepsilon>0$ and our case $\varepsilon=0$.

One can get even more, using results of Cowling, Giulini and Meda.
A particular case of their \cite[Theorem 2.2]{CGM1} reads:
\begin{theorem}\label{CGM-duke}
Suppose that
\begin{equation}\label{pqr}
2 \le r< \infty, \quad 1\le p\le r'<2<r \le q\le \infty, \quad \frac 1p-\frac1q \le \frac1{r'} \text{ and } (p,q) \ne (r',r).
\end{equation}
Then for every $k\in L^r(G)$, the convolution operator $T_k: f\mapsto k*f$ is bounded from $L^p(G)$ to $L^q(G)$, with norm $\|T_k\|\le C_{p,q,r} \|k\|_r$.
\end{theorem}
\begin{remark}
    The proof starts with the Kunze-Stein phenomenon \cite{CowlingKS} and applies then multilinear interpolation \cite{bergh1976interpolation}.
\end{remark}

\begin{theorem}\label{Lp-Lq-LB}
Suppose that $\Psi_t(r^2) = e^{itr} \psi(r)$ with $\psi$ satisfying assumptions of Theorem \ref{eit-psi-LB-no-gap}. Then $\Psi(\Delta_\rho)$ is bounded from $L^p(S)$ to $L^q(S)$ for all $p,q$ such that $1\le p < 2<q \le\infty$, and
    \[
    \norm{\Psi(\Delta_\rho)}_{L^{p}(S)\to L^{q}(S)} \le C_{G,\psi}C_{G,p,q}\, |t|^{-\nu}
    \]
    for $t\ne0$, where $C_{G,\psi}$ is the maximum of $C_{\psi,k,n-1}$ in \eqref{psi-r-integrals} over $0\le k\le \nu$.
\end{theorem}
\begin{proof}
Let $k_t$ be the kernel of $\Psi(\Delta_\rho)$ (which is $K$-bi-invariant). We use the bound (1) of Theorem \ref{eit-psi-LB-no-gap}, where as we recall the constant is $C_{G,\psi}$ as in the statement above.
By \eqref{integral_formula_KAK} and~\eqref{est-D}, for every $r>2$
\begin{align*}
  \|k_t\|_r^r &\le C_G \int_{ \mfa^+} | k_t(\exp(H)) |^r e^{ 2\rho(H) } d H
  \\ &\le C_{G, \psi} |t|^{-\nu r} \int_{ \mfa^+} (1+|H|)^{r(d+\nu)} e^{ (2-r) \rho(H) } d H < \infty.
\end{align*}
Theorem \ref{CGM-duke} implies now the statement: it suffices to choose $r$ so that \eqref{pqr} holds.
\end{proof}  

\begin{remark}
In our assumptions, $\Psi(\Delta_\rho)$ is also bounded on $L^2(S)$ with norm $\|\psi\|_\infty$, but this norm is independent of $t$. One can derive $L^p-L^2$ and $L^2-L^q$ bounds from it, for $p<2<q$.
\end{remark}

\section{Poisson kernel}\label{sec-poisson}

This is the kernel $p_\tau$ of the operator $\cal P_\tau = \exp( -\tau \sqrt{|\Delta_\rho|})$, with complex $\tau$ such that $\Re\tau\ge0$. For $t$ real (and positive), pointwise bounds have been obtained by Anker \cite{AnkerDuke}, and $L^{1,2}$ norms estimated by Cowling et al \cite{CGM2}. For complex time $\tau$, Cowling et al \cite{CGM3} have obtained pointwise and $L^p-L^q$ estimates in the case when either $\tau$ is bounded, or $\Re\tau > \Im\tau$:
\begin{theorem}[\cite{CGM3}]
For $\tau = u+it$ complex with $u = \Re\tau>0$, the Poisson kernel $\cal P_\tau$ is bounded from $L^p(S)$ to $L^q(S)$ for all $p,q$ such that $p \le 2\le q $ and $(p,q)\ne(2,2)$. The norms are bounded as
    \[
    \norm{\cal P_\tau}_{L^{p}(S)\to L^{q}(S)} \lesssim \begin{cases} u^{-n(1/p-1/q)}, & u\le 1;\\
   u^{-\nu/2}, & u\ge1 \text{ and } p=2 \text{ or }q=2;\\
   u^{-\nu}, & u\ge1 \text{ and } p<2<q.
\end{cases}
    \]
    for $t\ne0$.
    Moreover, if $T>0$ is fixed and $|\tau|\le T$, then
 \begin{equation}\label{pq-small-tau}
   \norm{\cal P_\tau}_{L^{p}(S)\to L^{q}(S)} \sim \frac{ |\tau|^{\frac{n-1}2 \, ( |\frac1q - \frac1{p'} | - |\frac1p - \frac1q| )} }
   { u^{\frac{n-1}2 \, |\frac1q - \frac1{p'} | + \frac{n+1}2 \, |\frac1p - \frac1q| } }
 \end{equation}
 for $1\le p\le 2\le q\le \infty$.
 \end{theorem}
These estimates are shown to be optimal if $p=2$ or $q=2$, or if $p<2<q$ and with a fixed $T$,
$$
\tau \in H_T := \big\{ \tau = u+it \in\C: u\ge0, \ u^2 \ge \frac { t^2}{T^2} + 1 \big\}.
$$
It is clear that for $u\ge |t| \ge1$, or more generally $\tau\in H_T$ with a fixed $T$, the estimate \eqref{pq-small-tau} is weaker than $u^{-\nu}$. The interesting remaining case is therefore $|t| \ge u$.

Our Theorem \ref{Lp-Lq-LB} implies the following bounds:

\begin{theorem}\label{Lp-Lq-Poisson}
For $\tau = u+it$ complex with $u = \Re\tau>0$, the Poisson kernel $\cal P_\tau$ is bounded from $L^p(S)$ to $L^q(S)$ for all $p,q$ such that $p < 2<q $, and
    \[
    \norm{\cal P_\tau}_{L^{p}(S)\to L^{q}(S)} \le C_{G,p,q}\, |t|^{-\nu} \begin{cases} u^{-n}, & u\le 1;\\
u^{\nu-1}, & u\ge1.
\end{cases}
    \]
    for $t\ne0$.
\end{theorem}
\begin{proof}
We apply Theorem \ref{Lp-Lq-LB} with $\psi(x) = e^{ -ux}$, having $u>0$. 
One can verify that the integrals in \eqref{psi-r-integrals} are bounded as
$$
C_{\psi,k,n-1} \le C_n \begin{cases} u^{k-n}, & u\le 1;\\
u^{k-1}, & u\ge1,
\end{cases}
$$
which implies immediately the statement.
\end{proof}


For fixed $u=\Re\tau$, this gives a very satisfactory estimate, probably optimal at large $t$, and of course better than \eqref{pq-small-tau}. 
For $|t|\le u$ this is much weaker than $u^{-\nu}$ above. But for $|t|\ge u^\gamma$ with $\gamma>1$, we get
$$
\norm{\cal P_\tau}_{L^{p}(S)\to L^{q}(S)} \lesssim |t|^{- \nu (1/\gamma-1)} \lesssim |\tau|^{- \nu (1/\gamma-1)},
$$
which demonstrates a rapid decay and might be optimal.

We note that for small $t$, our estimates are almost certainly not sharp.


\medskip
Analysis of the Poisson kernel associated to $\cal L$ is more complicated. In the following lemma we obtain pointwise bounds which will imply that the kernel is in $L^1(S)$; and in the next section, we deduce from it that the kernel of $\Psi(\cal L)$ is always in $L^1(S)$ when $\Psi$ satisfies our usual assumptions. Final bounds for the $L^1$ norm of the Poisson kernel are obtained in Remark \ref{poisson-L-l1-norm}.

\begin{lemma}\label{lemma-poisson-ptwise}
For $\Psi(r^2) = \exp(-itr) e^{-r}$ and $\|H\|> e^{|t|}$, 
the kernel $k_{t,\cal L}$ of $\Psi(\sqrt {\cal L})$ satisfies
$ |k_{t,\cal L}(e^H)| \lesssim h^{-d-l-1}\log h $ for $|t|$ large enough.
\end{lemma}
\begin{proof}
Denote $\tau=|t|$ and suppose that $h = \| H \| \ge e^\tau$. We set $N=d+l+1$ and $\delta=2N/\zeta$ where $\zeta$ is fixed as in \eqref{zeta}.

Decomposing $H=H_0+H_1$ as in the part \textbf{Estimates for $H$ large and away from $t$} of the proof of Theorem \ref{eit-psi-LB-no-gap}, we arrive at the integral
$$
I_t(H)  = \int_{ \mfa^*} e^{it|\l|} \psi(|\lambda|)e^{i\l(H_0)}\sigma(\lambda) d\lambda
$$
where $\sigma(\lambda):= \Bracket{\HCc_F\bar\HCc}^{-1}\!(\l) \, \theta_\lambda(\e^{H_1}) \e^{\rho_F(H_1)} $, with the estimate \eqref{kt-It<h^{-N}} for $k_t(e^H)$.

The reasoning in Theorem \ref{eit-psi-LB-no-gap} is written for $h\le 3\tau$, thus attention is needed to ensure that it applies in the current case. We have clearly \eqref{h-is-good-for-lemma}, and by construction, \eqref{h1-in-th4.1}. Next, \eqref{h0-below} holds, and moreover, $2\tau<\frac23 h<h_0<2h$. Lemma \ref{big-h-no-gap} is thus applied in the assumption $h_0>2\tau$ and gives \eqref{result-of-lemma-in-th-4.1}.


If $d_F\ge1$, that is, $H$ is ``close to the walls'', we obtain from it
\begin{align*}
|I_t(H)| &\lesssim (\log h)^{5d+2l}\, h^{-d-l-1}
\end{align*}
and as a consequence
\begin{align}\label{It-if-d_F>0}
|k_{t,\cal L}(e^H)| &= e^{\rho(H)} |k_t(e^H)| 
\lesssim (\log h)^{5d+2l} h^{-d-l-1}.
\end{align}

From now, suppose that $d_F=0$ so that $H_1=0$, $H=H_0$ and $\sigma(\lambda)= \bar\HCc^{-1}\!(\l)$.
By \eqref{gindikin-k}, the $\Gamma$-function having a simple pole at $0$, we can write
\begin{equation}\label{P(lambda)}
\sigma(\lambda) = P(\l)c_1(\l), \qquad P(\l) := \prod_{\a\in\Sigma^+_r} \langle\l,\a\rangle 
\end{equation}
with a smooth function $c_1$. Set now 
\[
\sigma_1(\l) := P(\l) c_1(0)
\quad \text{and}\quad \sigma_2(\l) :=\sigma(\lambda) - \sigma_1(\l)=P(\l)(c_1(\l)-c_1(0)).
\] 
Let $I_1(t,H)$ and $I_2(t,H)$ be $I_t(H)$ above with $\sigma(\lambda)$ replaced by $\sigma_1$ and $\sigma_2$ respectively. 

To $I_2(t,H)$, Lemma \ref{big-h-no-gap} applies again, since $\sigma_2$ has a zero of order $a=d
+1$ at $\l=0$, and $h > 2\tau$, $|h-\tau|\ge1$; moreover, $\sigma_2(\l)$ and its derivatives are bounded by 
$
(1+|\l|)^{n-l}
$.
By this lemma,
\begin{equation}\label{I_2-e-r}
|I_2(t,H)|\le C\, h^{-d-l-1}.
\end{equation}

Now, 
\begin{align*}
I_1(t,H)  &= c_1(0) \int_{ \mfa^*} e^{it|\l|} \psi(|\lambda|) P(\l) \, e^{i\l(H)} d\lambda
\end{align*}
can be written, changing variables similarly to \eqref{integral-with-lambda1} and then passing to polar coordinates, as
\begin{align}\label{I_1-e-r}
I_1(t,H)  &= c_1(0) \int_0^\infty e^{itr} e^{-r}  r^{d+l-1} \int_{S^l}\, e^{irh \cos\theta_1}
J(\Theta) P(O\Theta) d\Theta dr,
\end{align}
where $O$ is an orthogonal matrix and  $J(\Theta) = \bracket{\sin \theta_{l-2}}\dots  \bracket{\sin \theta_1}^{l-2}$. 

Let us denote
\begin{equation}\label{R_1-def}
R_1(\theta_1) = \int_{[0,\pi]^{l-3}\times[0,2\pi]} J(\Theta) P(O\Theta) d\theta_2\dots d\theta_{l-1}
\end{equation}
and $R(v) = R_1(\arccos v) / \sqrt{1-v^2}$, $v\in[-1,1]$. Since $P$ is a polynomial (homogeneous of degree $d$), 
$R_1$ is a trigonometric polynomial, and $R$ is a linear combination of terms $v^k (1-v^2)^{m/2}$, with 
\[
0\le k \le d,\  -1\le l-3\le m\le d+l-3.
\] 
Changing variable to $v=\cos\theta_1$, we get
\begin{align*}
I_1(t,H)  &= c_1(0) \int_0^\infty e^{itr} e^{-r}  r^{d+l-1} \int_0^\pi R_1(\theta_1) e^{ihr\cos \theta_1} d\theta_1 dr
\\ &= c_1(0) \int_{-1}^{1}R(v) \int_0^\infty e^{(it+ihv-1) r}  r^{d+l-1}  dr dv.
\end{align*}

Denote $D = d+l$. Direct integration by parts in $r$ gives
\begin{equation}\label{I_1-h-D}
I_1(t,H) = C h^{-D} \int_{-1}^1\, R(v) \Big(v + \frac{t+i}h \Big)^{-D} dv =: C h^{-D} I(h).
\end{equation}
If we denote for $\eps\ge0$
$$
I_\eps(h) = \int_{-1+\eps}^{1-\eps}\, R(v) \Big(v + \frac{t+i}h \Big)^{-D} dv,
$$
then clearly $I(h) = \lim\limits_{\eps\to0+} I_\eps(h)$, with $| I(h) - I_\eps(h)| \lesssim \sqrt\eps$, uniformly in $h$. (Here and in the subsequent proof $\lesssim$ means up to a constant independent of $\eps$ and $h$.)

For $z$ in the unit disk $\D = \{ z\in\C: |z|<1 \}$, one has $\Re(1-z^2)>0$, so that the principal square root is well defined and holomorphic. It follows that $1/\sqrt{1-z^2}$ and $R$ are holomorphic in $\D$,
and they are also continuous on its closure $\bar\D$, except for $\pm1$. 
For fixed $ \eps\in(0,1/2)$ and $h$, the function $f_h(z) = R(z) \big(z + \frac{t+i}h \big)^{-D}$ is well defined and holomorphic in the ``almost half-disk'' $\{ z: |z|<1-\eps/2, \Im z > -\frac1{2h}\}$. Its integral over $[-1+\eps,1-\eps]$ is there\-fore equal to the integral over the following half-circle: $\Gamma_\eps = \{ (1-\eps) e^{is} : \pi \ge s\ge 0\}$.
For $z\in\Gamma_\eps$, we have $\big(z + \frac{t+i}h \big)^{-D} \to z^{-D}$, $h\to+\infty$, and the convergence is uniform:
\begin{equation}
\Big| \big(z + \frac{t+i}h \big)^{-D} - z^{-D} \Big| \le C_D \frac{ |t|+1}h \le C'_D \frac {\log h}h, \qquad z\in \Gamma_\eps.
\end{equation}
Thus,
$$
\int_{\Gamma_\eps} f_h(z) dz \to \int_{\Gamma_\eps} R(z) z^{-D} dz=: I_\eps, \quad h\to+\infty,
$$
with $ \Big| \int_{\Gamma_\eps} f_h(z) dz - I_\eps \Big| \lesssim \frac {\log h}h \int_{\Gamma_\eps} |R(z) | dz \lesssim \frac {\log h}h$.

Consider now the unit upper half-circle $\Gamma = \{ e^{is} :  \pi \ge s\ge 0 \}$ and denote $ f(z) = R(z) z^{-D}$, $I = \int_{\Gamma} f(z) dz$. We have
$$
| I - I_\eps | \le \int_\Gamma \big| f\big( (1-\eps) z \big) - f(z) \big| dz.
$$
Let $\Gamma_{\pm1}$ be the part of the curve close to the ends: 
$$
\Gamma_{\pm1} = \Gamma\cap \{ z: \min(|z-1|,|z+1|)<\eps^{5/8}\}.
$$
Since for $z\in\Gamma$ we have always $|(1-\eps)z\pm1| \ge\eps$ and therefore $|f\big( (1-\eps) z \big)| \lesssim \eps^{-1/2}$,
$$
\int_{\Gamma_{\pm1}} \big| f\big( (1-\eps) z) \big) - f(z) \big| dz \lesssim \eps^{5/8-1/2} + \int_0^{\eps^{5/8}} s^{-1/2} ds \lesssim \eps^{1/8}.
$$
For the remaining part we have $|z\pm1| \ge \eps^{5/8}$ and also $|sz\pm1|\gtrsim \eps^{5/8}$ for $s\in [1-\eps,1]$, thus for $z\in \Gamma\setminus \Gamma_{\pm 1}$
\begin{align*}
\big| f\big( (1-\eps) z) \big) - f(z) \big| &\le \eps \sup_{s\in (1-\eps,1)} | f'(sz)|
\\&\lesssim \eps \sup_{s\in (1-\eps,1)} |1-s^2z^2|^{-3/2}
\\&\lesssim \eps^{ 1 - 15/16} = \eps^{1/16} .
\end{align*}
Altogether, this implies that
\begin{equation}\label{I-formula-with-Gamma}
I(h) \to I = \int_{\Gamma} R(z) z^{-D} dz, \quad h\to+\infty,
\end{equation}
and $| I(h) - I | \lesssim \frac {\log h}h$.

Let us now evaluate $I$ more explicitly. For $|z|<1$, $R(z)$ is equal to the sum of its Taylor series:
$$
R(z) = \sum_{m=0}^\infty R_m z^m.
$$
Let us denote by $T(z)$ its Taylor polynomial of order $D-1$ and by $\tilde R(z)$ the remainder. We can note that $\tilde R$ is of order $O(z^D)$ at $z\to0$, thus $\tilde R(z)/z^D$ is holomorphic in the open unit disk.

The integral of $T(z) z^{-D}$ is calculated directly:
\begin{align*}
\int_{\Gamma} T(z) z^{-D} dz & = \int_{\Gamma} \sum_{m=0}^{D-1} \frac{ R^{(m)}(0) }{ m! } z^{m-D} dz
\\&= \sum_{m=0}^{D-1} R_m i \int_{\pi}^0 e^{(1+m-D) is} ds
\\&= \sum_{m=0}^{D-2} R_m i \Big[ \frac{ e^{(1+m-D) is} }{ i(1+m-D) } \Big]_{\pi}^0 + i\pi R_{D-1}
\\&= \sum_{m=0}^{D-2} R_m \frac{ 1 - e^{(1+m-D) i\pi} }{ (1+m-D) }  + i\pi R_{D-1} .
\end{align*}
In the sum, there remain only the terms with $m-D$ even, so that $1 - e^{(1+m-D) i\pi} = 2$. Returning to the integration over $[-1,1]$ for $\tilde R$, we obtain:
\begin{equation}\label{I-formula-with-R_D}
I = 2 \sum_{0\le m\le D-2 \atop m-D \text{ even}} \frac{ R_m }{ 1+m-D } + i\pi R_{D-1} + \int_{-1}^1 \tilde R(v) v^{-D} dv.
\end{equation}

To continue, we need more information on the function~$R$. The following reasoning has sense for $l\ge3$ only; for $l=2$ we get no additional constraints in this way.

By \eqref{P(lambda)} and \eqref{polar-nots}, $P(O\Theta)$ is a linear combination of
\begin{align*}
\cos^k\theta_1 \sin^{m_1}\theta_1 \cos^{k_2}\theta_{2} \sin^{m_2}\theta_{2} \dots\cos^{k_{l-1}}\theta_{l-1} \sin^{m_{l-1}}\theta_{l-1},
\end{align*}
where $m_j = k_{j+1}+m_{j+1}$ for every $j$ between $1$ and $l-2$. Every term is multiplied by $J(\Theta)$ and integrated, as in \eqref{R_1-def}, over $\theta_j\in[0,\pi]$, $2\le j\le l-2$, and $\theta_{l-1}\in[0,2\pi]$. This integral is nonzero only if every $k_j$ is even, $2\le j\le l-1$, and moreover $m_{l-1}$ is even.
But then by summation every $m_j$ is even as well, $1\le j\le l-2$. Since $P$ is a homogeneous polynomial of degree $d$, we also have $k+m_1=d$, which implies that $k-d$ is even.

Now, the function
\begin{equation}\label{R/v^D}
\frac{ R(v) }{ v^D } = \sum_k c_k v^{k-d-l} (1-v^2)^{(d-k+l-3)/2}
\end{equation}
(with $l-2$ in the power of $1-v^2$ coming from $J(\Theta)$)  is odd if $l$ is odd, and is even if $l\ge4$ is even; recall that for $l=2$ these considerations do not apply and $R$ can have both odd and even terms.

If $l$ is odd, then $d-k+l-3$ is even, and $R$ is actually a polynomial, of degree at most $d+l-3=D-3$. In particular, in this case $R^{(D-1)}\equiv0$ and $R_{D-1}=0$. If $l\ge4$ is even, then $R^{(D)}$ is even, $R^{(D-1)}$ odd and $R_{D-1}=0$ as well.

As for $l=2$, we write $R_e = \sum\limits_{k-d \text{ even}} c_k v^{k} (1-v^2)^{(d-k-1)/2}$ (with coefficients as in \eqref{R/v^D}) and $R = R_e + R_o$. By the same reasoning as in the case of $l\ge 4$ even, we have $R_e^{(D-1)}(0)=0$. As for $R_o$, it is a polynomial of degree $\le D-2$, as for $l$ odd, thus $R_o^{(D-1)}\equiv0$.

We conclude that in \eqref{I-formula-with-R_D}, we have always $R_{D-1}=0$.
If $l$ is odd, the sum vanishes as well. Moreover, $\tilde R$ has the same parity as $R$ so that $\tilde R(v) / v^D$ is odd, which means that $I=0$. This applies also to $R_o$ in the case $l=2$, thus in \eqref{I-formula-with-R_D} we can replace $R$ by $R_e$.

We can suppose from now that $l$ is even and $R(v) / v^D$ is even as well. We will show that $I=0$. Combining the initial formula \eqref{I-formula-with-Gamma} with \eqref{R/v^D}, we see that it suffices to show that for any integer $m\ge1$
$$
J_m := \int_\Gamma z^{-2m} (1-z^2)^{(2m-3)/2} dz = 0.
$$
Changing variables to $w=z^2$, we pass to the integral over the circle $\Gamma'= \{ e^{iu}: u\in[ 2\pi, 0]\}$
 (followed in the clockwise direction):
$$
J_m = \frac12\int_{\Gamma'} w^{-m-1/2} (1-w)^{m-3/2} dw.
$$

In this integral we recognize an analytic continuation of the Beta function (this is \cite[1.6(9)]{BE}, up to changing the contour direction and writing $-w=e^{-i\pi}w$), valid for $\Re z_2>0$ and non-integer~$z_1$:
$$
B ( z_1, z_2) = \big( 1 - e^{ 2\pi i z_1 } \big)^{-1} \int_{\Gamma'} w^{z_1-1} (1-w)^{z_2-1} dw,
$$
so that $J_m = \big( 1 - e^{ -2\pi i (m+\frac12) } \big) B(\frac12-m, m-\frac12)$.

 The expression of Beta in terms of Gamma function $B ( z_1, z_2) = \dfrac{ \Gamma(z_1) \Gamma(z_2)}{ \Gamma(z_1+z_2)}$ implies that $B(z_1,z_2)=0$ if $z_1+z_2\le 0$ is integer; and this is exactly our case, thus $J_m=0$.

We conclude that $|I_1(t,H)|\lesssim h^{-d-l-1}\log h$. This dominates also the term $h^{-d-l-1}$ appearing in \eqref{kt-It<h^{-N}} and \eqref{I_2-e-r}, as well as the estimates \eqref{It-if-d_F>0} valid close to the walls. This completes the proof.
\end{proof}
\section{$L^1$ norms for the distinguished Laplacian}\label{sec-L1}

It is known that functions of the distinguished Laplacian are often bounded on $L^p$, $1<p<\infty$ \cite{Hebisch,cowling1994spectral,sikora2002spectral}, and as mentioned in the introduction, results of Sikora~\cite{sikora2002spectral} predict for our function the estimate $\lesssim |t|^{\nu/2+\varepsilon}$ of the weak $L^1$ norm of $\|\Psi_t(\cal L)\|$, for large $t$.
In his unpublished thesis \cite{gadzinski}, Gadzinki obtained a better estimate 
\[
\|k_{\Psi(\cal L)}\|_1 \lesssim |t|^{\frac{\nu-1}2}
\]
for the special function $\Psi(x^2) = \cos( t x) \exp(-x^2)$, suggesting that oscillating functions behave better than general ones.

In this section we prove a stronger estimate $\|\Psi_t(\cal L)\|_1\lesssim |t|^{(\nu-1)/2}(\log |t|)^c$, with a constant $c>0$, for $\Psi(x^2) = e^{it x} \psi(x)$ with any function $\psi$ decreasing polynomially as in \Cref{eit-psi-LB-no-gap}, and derive from it corresponding $L^p-L^p$ estimates for every $p\ge1$.

In particular (see Remark \ref{minimal_alpha}) this applies to $\psi(x)=(1+x^2)^{-\alpha/2}$ when
        \[
            \Re\alpha > \max\!\left(7(n-l)+2,\; n+2d+(l+\lfloor l/2 \rfloor -1)/2 \right).
        \]

\begin{proposition}\label{integral_formula_L1_norm}
    For any measurable set $E\subset \mfa^+$, define $KE:=K\exp E$, which is a $K$-invariant measurable set of $S$. Then we have
    \[
    \norm{\chi_{KE} k_{t,\cal L}}_{L^1(S, d_l)} =C_G\int_{E} \varphi_0(\exp(H)) \abso{k_{t}(\exp(H))} D(H)d H.
    \]
\end{proposition}
\begin{proof}
    Set $f:=k_{t,\cal L}$ and $g:=k_t$ in \cite[Lemma 1.3]{cowling1994spectral}.
\end{proof}

In practice, this allows to estimate norms very directly:
\begin{corollary}\label{integral_formula_L1_norm}
    If $E= \{ x\in S: a< |x^+|< b\}$ with $-\infty\le a<b\le+\infty$, and $|k_{t,\cal L}(x)| \le f(|x^+|)$ for every $x\in E$, then
    \[
    \norm{\chi_{E} k_{t,\cal L}}_{L^1(S, d_l)} \lesssim \int_a^b f(h) (1+h)^d h^{l-1} dh.
    \]
\end{corollary}
\begin{proof}
For $x=\exp(H)\in E$ with $H\in \mfa^+$ we have $|k_t(x)| = e^{-\rho(H)} |k_{t,\cal L}(x)| \le e^{-\rho(H)} f(|H|)$. For the functions $\phi_0$ and $D$ we have known estimates \eqref{est-D} and~\eqref{est-phi_0}. Finally, the $l$-dimensional integral over the Lebesgue measure $dH$ in $\mfa^+$ is, up to a constant, an integral of $h^{l-1}dh$ over $h\in\R$ (as we are integrating radial functions). Altogether, this gives the formula above.
\end{proof}

We consider first the case when $\psi$ vanishes at 0.

\begin{lemma}\label{L1-norm-of-L-psi(0)=0}
In the assumptions of Theorem \ref{eit-psi-LB-no-gap}, suppose that $\psi(0)=0$. Then for $t$ large enough, $k_{t,\cal L}$ is integrable on $G$ and
$$
\| k_{t,\cal L} \|_1 \lesssim |t|^{ \frac {\nu-1}2} (\log |t|)^{3d+3+(3l+1)/2}.
$$
\end{lemma}
\begin{proof}
We can rewrite, when $\tau:=|t|>0$ is large enough, the closure of the positive chamber $\Ca=\bigcup_{i=1}^5 E_i$ with 
\begin{align*}
	E_1&=\set{H\in \Ca: 0\le \abso{H}\le \log \tau };
	\\E_2&=\set{H\in \Ca: \log \tau\le \abso{H}\le \tau-\delta\log \tau };
	\\E_3&=\set{H\in \Ca: \tau-\delta\log \tau\le \abso{H}\le t+\delta\log \tau };
	\\E_4&=\set{H\in \Ca: \tau+\delta\log \tau\le \abso{H}\le 3\tau};
	\\E_5&=\set{H\in \Ca: \abso{H}\ge 3\tau}.
\end{align*}
On $E_1$ we can estimate the kernel by \Cref{eit-psi-L-no-gap} (1), and get
\begin{align*}
	\norm{\chi_{KE_1}k_{t, \cal L}}_1
&\lesssim \int_0^{\log \tau} (1+h)^{\nu+2d} h^{l-1} \tau^{-\nu} dh
	\\&\lesssim \tau^{-\nu} (\log \tau)^{\nu+2d+l} = \tau^{-\nu} (\log \tau)^{2\nu}.
\end{align*}
On $E_2\cup E_4$ we can use \Cref{eit-psi-L-no-gap} (4), with $b=1$. We apply it however with $b=0$, to show that the resulting power of $t$ is the same as in the case when $\psi(0)\ne0$. Denote $I_2 = [\log \tau, \tau-\delta\log \tau]$ and $I_4 = [ \tau+\delta\log \tau, 3\tau]$, then
\begin{align*}
	&\norm{\chi_{K(E_2\cup E_4)}k_{t, \cal L}}_1 
	\\&\lesssim \int_{I_2\cup I_4} (1+h)^d h^{l-1} \big[ h^{-d-l} + h^{(1-l)/2}|h- \tau|^{-d-(l+1)/2} (\log \tau)^{4d+3+2l} \big] dh
	\\&\lesssim \int_1^{3\tau} h^{-1} dh + (\log \tau)^{4d+3+2l} \int_{I_2\cup I_4} h^{d+(l-1)/2}|h- \tau|^{-d-(l+1)/2} dh
	\\&\lesssim \log \tau
    + \tau^{d+(l-1)/2}(\log \tau)^{4d+3+2l}\int_{I_2\cup I_4} |h- \tau|^{-d-(l+1)/2} \, dh.
\end{align*}
For $\a>1$ and large $\tau$, we can estimate
\begin{align*}
    \int_{I_2\cup I_4} |h- \tau|^{-\a} \, dh
    &= -\frac1{1-\a} \Big[ (\tau-h)^{1-\a}\Big]_{\log \tau}^{\tau-\delta\log \tau} + \frac1{1-\a} \Big[ (h-\tau)^{1-\a}\Big]_{\tau+\delta\log \tau}^{3\tau}
    \\&\lesssim (\log \tau)^{1-\a} + \tau^{1-\a} \lesssim (\log \tau)^{1-\a},
\end{align*}
which implies
\begin{equation}\label{E2_E4}
\norm{\chi_{K(E_2\cup E_4)}k_{t, \cal L}}_1 \lesssim \tau^{d+(l-1)/2} (\log \tau)^{3d+3+(3l+1)/2}.
\end{equation}
On $E_3$ with \Cref{eit-psi-L-no-gap} (2) we have
\[
\norm{\chi_{KE_3}k_{t, \cal L}}_1 \lesssim \int_{\tau-\delta \log \tau}^{ \tau+\delta \log \tau} \tau^{(1-l)/2} (\log \tau)^d (1+h)^d h^{l-1} dh
\lesssim \tau^{d+(l-1)/2} (\log \tau)^{d+1} ;
\]
and on $E_5$ with \Cref{eit-psi-L-no-gap} (3), where $b=1$,
\[
\norm{\chi_{KE_5}k_{t, \cal L}}_1 \lesssim \int_{3\tau}^\infty (1+h)^d h^{l-1} h^{-d-l-1} dh\, \lesssim \tau^{-1}.
\]
We stress that the estimates on $E_k$, $k=1,2,3,4$ do not depend on $b$ and are valid also for $b=0$, when $\psi(0)$ is not supposed to be zero.

Collecting the estimates above, we arrive at
\begin{align*}
	\norm{k_{t, \cal L}}_1  \lesssim  \tau^{(2d+l-1)/2} (\log \tau)^{3d+3+(3l+1)/2},
\end{align*}
which completes the proof.
\end{proof}

This theorem applies in particular to any function $\psi$ supported in $(0,+\infty)$, provided it is $\nu$ times differentiable.

\begin{lemma}\label{lemma-e-r}
For $\Psi(r^2) = \exp(itr) e^{-r}$, the kernel $k_{t,\cal L}$ of $\Psi(\sqrt {\cal L})$ has norm
$ \|k_{t,\cal L}\|_1 \lesssim |t|^{(\nu-1)/2} (\log |t|)^{3d+3+(3l+1)/2}$ for $|t|$ large enough.
\end{lemma}
\begin{proof}
We decompose $\Ca$ similarly to Lemma \ref{L1-norm-of-L-psi(0)=0}, with the same $E_k$, $k=1,2,3,4$, but set this time
\begin{align*}
	E_5&=\set{H\in \Ca: 3|t|\le \abso{H}\le e^{|t|}};
	\\E_6&=\set{H\in \Ca: \abso{H}\ge e^{|t|}}.
\end{align*}
As mentioned in Lemma \ref{L1-norm-of-L-psi(0)=0}, on $E=E_1\cup E_2\cup E_3\cup E_4$ we have
\[
\norm{\chi_{KE}k_{t, \cal L}}_1 \lesssim |t|^{(\nu-1)/2} (\log |t|)^{3d+3+(3l+1)/2}.
\]
On $E_5$, we use \Cref{eit-psi-L-no-gap} (3) with $b=0$ and Corollary \ref{integral_formula_L1_norm} to calculate
\[
\norm{\chi_{KE_5}k_{t, \cal L}}_1 \lesssim \int_{3|t|}^{e^{|t|}} h^{d+l-1} h^{-d-l} dh = |t|-\log(3|t|),
\]
which is less than $\norm{\chi_{KE}k_{t, \cal L}}_1$.

And on the set $E_6$, we apply Lemma \ref{lemma-poisson-ptwise}:
$|k_{t,\cal L}(e^H)| \lesssim h^{-d-l-1}\log h $, thus
$$
\|\chi_{E_6} k_{t,\cal L}\|_1 \lesssim \int_{e^{|t|}}^\infty (1+h)^d h^{l-1} h^{-d-l-1} \log h dh \lesssim \int_{e^{|t|}}^\infty h^{-3/2} dh \lesssim e^{-|t|/2},
$$
completing the proof.
\end{proof}

\begin{theorem}\label{L-l1-norm}
Let $\psi$ satisfy the assumptions of \Cref{eit-psi-LB-no-gap}, then for $|t|$ large enough
$$
\| k_{t,\cal L}\|_1 \lesssim |t|^{(\nu-1)/2} (\log |t|)^{3d+3+(3l+1)/2}.
$$
\end{theorem}
\begin{proof}
Let us denote $\psi_1(r) = \psi(r) - \psi(0) e^{-r}$, then $\psi_1$ satisfies conditions of Theorem \ref{L1-norm-of-L-psi(0)=0} and $\psi_1(0)=0$, thus its kernel $k_{1,t}$ has norm $\|k_{1,t}\|_1 \lesssim |t|^{(\nu-1)/2} (\log |t|)^{3d+3+(3l+1)/2}$. But the kernel corresponding to $\psi_2=\psi-\psi_1$ has the same bound by Lemma \ref{lemma-e-r}; this proves the theorem.
\end{proof}

By complex interpolation, we get immediately
\begin{corollary}\label{L-Lp-Lq-norms}
In the assumptions of Theorem \ref{eit-psi-LB-no-gap}, for $t$ is large enough and $1\le p \le \infty$,
    \[
    \norm{\Psi_t(\cal L)}_{L^p(S)\rightarrow L^p(S)} \lesssim  t^{(\nu-1) |\frac 1p -\frac12|} (\log t)^{[6d+3l+7]|\frac 1p -\frac12|}.
    \]
\end{corollary}

\begin{remark}
Theorem \ref{L-l1-norm} and Corollary \ref{L-Lp-Lq-norms} apply clearly to the kernels of $\sin(t \sqrt {\cal L}) \psi( \sqrt {\cal L})$ and $\cos(t \sqrt {\cal L}) \psi( \sqrt {\cal L})$, with the same assumptions on $\psi$.
\end{remark}

\begin{remark}\label{poisson-L-l1-norm}
An important particular case is the Poisson kernel. For $\tau = u+it$ with $u,t$ real, $u>0$, let $k_\tau$ be the kernel of $\exp(-\tau \sqrt{\cal L})$. Theorem \ref{L-l1-norm} gives estimates of $\|k_{1+it}\|_1$. Next, if we applied this theorem to $\tau=u+it$, we would get a positive power of $u$ in the estimates. But we can use results of Anker and Ji \cite[Theorem 4.3.1(ii)]{AnkerJi} instead: for $u\ge1$ and $x\in S$,
$$
|k_u(x)| \asymp u (u+|x|)^{-l-2d-1} \phi_0(x) \delta(x)^{1/2}.
$$
By \eqref{est-phi_0} and Corollary \ref{integral_formula_L1_norm}, this implies (for $u\ge1$)
\begin{align*}
\| k_u \|_1 & \le u \int_0^\infty (u+h)^{-l-2d-1} (1+h)^{2d} h^{l-1} dh
\\ & \lesssim u \Big( u^{ -l-2d-1} \int_0^u (1+h)^{2d} h^{l-1} dh + \int_u^\infty h^{-2} dh \Big)
\\ & \lesssim u^{ -l-2d} \Big( 1 + \int_1^u h^{2d+l-1} dh \Big) + 1
\lesssim 1,
\end{align*}
as in the euclidean case.
Writing $k_{u+it} = k_{u-1} * k_{1+it}$, we can conclude that 
$$
\| k_{u+it} \|_1 \lesssim |t|^{(\nu-1)/2} (\log |t|)^{3d+3+(3l+1)/2},
$$
uniformly in $u\ge 2$ and for $t$ large enough, independently of $u$.
\end{remark}

\bibliographystyle{abbrv} 
\bibliography{references}



%
%
%

\end{document}